\documentclass{amsart}

\usepackage{amsmath, amssymb, amsbsy}
\usepackage{thmtools}
\usepackage{array}
\usepackage{graphpap, paralist,xcolor}
\usepackage[mathscr]{eucal}
\usepackage{graphicx}

\usepackage{setspace}
\setdisplayskipstretch{1}
\usepackage{geometry}
\usepackage{comment}
\usepackage{mathtools}
\usepackage{float}
\usepackage[colorlinks,backref=page,citecolor=blue]{hyperref}
\usepackage[capitalize]{cleveref}
\usepackage{crossreftools}
\DeclarePairedDelimiter{\ceil}{\lceil}{\rceil}
\newcounter{bullet}

\newtheorem{thm}{Theorem}[section]
\newtheorem{prop}[thm]{Proposition}
\newtheorem{cor}[thm]{Corollary}
\newtheorem{lem}[thm]{Lemma}

\newtheorem{conj}[thm]{Conjecture}

\theoremstyle{definition}
\newtheorem{mydef}[thm]{Definition}
\newtheorem{claim}[thm]{Claim}

\newtheorem{question}[thm]{Question}
\newtheorem{remark}[thm]{Remark}
\newtheorem{fact}[thm]{Fact}
\newtheorem{obs}[thm]{Observation}

\crefname{lem}{Lemma}{Lemmas}

\newcommand{\EE}{\mathbb{E}}

\newcommand{\cD}{\mathcal{D} }

\newcommand{\cF}{\mathcal{F} }
\newcommand{\cG}{\mathcal{G} }

\newcommand{\cI}{\mathcal{I} }

\newcommand{\cS}{\mathcal{S} }

\newcommand{\beq}[1]{\begin{equation}\label{#1}}
\newcommand{\enq}[0]{\end{equation}}

\newcommand{\eps}{\epsilon}

\newcommand{\gd}[0]{\delta }
\newcommand{\gD}[0]{\Delta }

\newcommand{\nin}[0]{\noindent}

\newcommand{\sub}[0]{\subseteq}

\newcommand{\bin}[0]{\mbox{\rm{Bin}}}

\newcommand{\pr}[0]{\mathbb{P}}

\newcommand{\aut}[0]{\mbox{aut}}

\renewcommand{\d}{\mbox{\rm{degen}}}

\renewcommand{\ceil}[1]{\left\lceil#1\right\rceil}

\newcommand{\pE}[0]{p_{\mathbb E}}

\newcommand{\perm}{{\rm perm}}

\newenvironment{subproof}[1][\proofname]{
  
  \begin{proof}[#1]
}{
  \end{proof}
}

\begin{document}

\nin {\small \today}

\bigskip

\title{A reformulation of the discrete Convexity Conjecture via $k$-thresholds}

\author[R. Ascoli]{Ruben Ascoli}
\address{School of Mathematics, Georgia Institute of Technology}
\email{rascoli3@gatech.edu}

\author[X. He]{Xiaoyu He}
\address{School of Mathematics, Georgia Institute of Technology}
\email{xhe399@gatech.edu}

\author[J. Park]{Jinyoung Park}
\address{Department of Mathematics, The Courant Institute School of Mathematics, Computing, and Data Science, New York University}
\email{jinyoungpark@nyu.edu}

\author[M. Talagrand]{Michel Talagrand}
\address{French National Center for Scientific Research (CNRS)}
\email{michel.talagrand@gmail.com}

\begin{abstract}
We introduce the notion of ``$k$-thresholds'' and show that Talagrand's discrete convexity conjecture is equivalent to the assertion that, for some universal integer $k \ge 2$, the $k$-threshold of every increasing family is at most a universal constant times its expectation threshold. We prove a reduction theorem that bounds the $k$-threshold of any increasing graph property in terms of ordinary thresholds of graphs in suitable decompositions of its members. As a consequence, we determine, up to a constant factor, the $k$-threshold of every fixed graph in terms of a natural $k$-density parameter. We also prove that $k=2$ suffices for several classical spanning graph containment properties. More generally, we establish the conjectured comparison between $k$-thresholds and expectation thresholds for broad classes of graph containment properties whose target graphs have low degeneracy.
\end{abstract}

\maketitle

\section{Introduction}\label{sec.intro}

The convexity conjecture of the fourth author asks, roughly, whether one can ``create" convexity in a constant number of steps regardless of the dimension of the ambient space. We refer the reader to \cite{talagrand1995all} for background on the conjecture, and to \cite{HST26} for a recent proof by Hua, Song, and Tudose.
In his original paper~\cite{talagrand1995all}, the fourth author also introduced a discrete version of the conjecture, which he later revisited in slightly different forms in \cite{talagrand2010many, Tal26}. This discrete conjecture, stated as \Cref{conj:DCC} below, remains open and is the main focus of the present paper.

We approach the discrete convexity conjecture by introducing the new notion of $k$-thresholds, which allows us to reformulate the conjecture in the language of threshold phenomena. We then develop this viewpoint for graph properties by proving a reduction theorem that bounds $k$-thresholds in terms of ordinary thresholds. Using this reduction, we verify the reformulated conjecture for several broad classes of graph containment properties.

To state the discrete convexity conjecture, we first introduce some basic definitions. For a finite set $X$ and $p \in [0,1]$, let $X_p$ denote the random subset of $X$ obtained by including each element independently with probability $p$, and let $\mu_p$ denote its distribution on $2^X$. Thus, for every $A \sub X$, $\mu_p(\{A\})=p^{|A|}(1-p)^{|X \setminus A|}$, and $\mu_p(\mathcal A)=\pr(X_p \in \mathcal A)=\sum_{A \in \mathcal A}\mu_p(\{A\})$ for every $\mathcal A \sub 2^X$.
With a given universe $X$, a \textit{family} or \textit{property} $\cF$ is a subset of $2^X$, and $\cF$ is \textit{nontrivial} if $\cF \neq \emptyset, 2^X$. Unless explicitly stated otherwise, all families considered below are assumed to be nontrivial.
Say a family $\cF \sub 2^X$ is \textit{increasing} if $B \supseteq A \in \cF$ implies $B \in \cF$ and \textit{decreasing} if $B \sub A \in \cF$ implies $B \in \cF$. In this paper, $\cI$ (respectively $\cD$) will always denote an increasing (respectively decreasing) family.

\begin{mydef}
For a decreasing family $\cD \sub 2^X$ and positive integer $k$, 
\[\cD_{(k)}:=\{D_1 \cup \cdots \cup D_k: D_i \in \cD\}, \quad \text{and} \quad \cD^{(k)}:=2^X \setminus \cD_{(k)}.\]
In words, $\cD^{(k)}$ is the collection of sets that cannot be expressed as a union of $k$ sets in $\cD$. Note that $\cD_{(k)}$ is decreasing and $\cD^{(k)}$ is increasing.
\end{mydef}

For $S \sub X$, define the \textit{up-set}
generated by $S$ to be $\langle S \rangle:=\{T \sub X:T \supseteq S\}$. Given $\cI \sub 2^X$, we say that a family $\cG \sub 2^X$ \textit{covers} $\cI$ if $\cI \sub \bigcup_{S \in \cG} \langle S \rangle$.  Following \cite{talagrand1995all, talagrand2005generic, talagrand2010many}, we say that $\cI \sub 2^X$ is \textit{$p$-small} if it has a cover $\cG \sub 2^X$ such that $\sum_{S \in \cG} \mu_p(\langle S \rangle) \le 1/2$. Note that if $\cI$ is $p$-small, then
\beq{eq:p-small}
    \mu_p(\cI)
    \leq \mu_p\left(\bigcup_{S\in\cG}\langle S\rangle\right)
    \leq \sum_{S\in\cG}\mu_p(\langle S\rangle)
    \leq \frac12 .
\enq
Thus, $p$-smallness is stronger than the condition $\mu_p(\cI)\le 1/2$; it requires this small measure to be certified by a cover.

\begin{conj}[Talagrand \cite{talagrand1995all}]\label{conj:DCC}
    There exist a positive integer $k$ and a universal constant $L\ge 1$ such that, for every $p \in (0,1)$, finite set $X$, and decreasing family $\cD \sub 2^X$, if $\mu_p(\cD) \ge 1-1/(2k)$ then $\cD^{(k)}$ is $(p/L)$-small.\footnote{The original formulation of the discrete convexity conjecture asks
    for the stronger-looking conclusion that $\cD^{(k)}$ is $p$-small, with
    the assumption $\mu_p(\cD)\ge 1-1/k$. As noted in
    \cite{talagrand2010many}, allowing a constant-factor change in the
    parameter of smallness gives an equivalent conjecture, after changing the
    universal constant $k$. Similarly, replacing $1/k$ by $1/(2k)$ in the hypothesis also gives an equivalent formulation.}
\end{conj}

Observe that, under the assumptions of \Cref{conj:DCC}, the family $\cD^{(k)}$ already has small $\mu_p$-measure since $\mu_p(\cD^{(k)})\le 1 - \mu_p(\cD) \leq 1/(2k)$, and the content of the conjecture is the stronger conclusion that $\cD^{(k)}$ is $(p/L)$-small. In other words, apart from a family whose smallness is certified by a cover, every subset of $X$ can be written as a union of $k$ members of $\cD$.

We introduce the notion of $k$-thresholds and use it to give an equivalent formulation of \cref{conj:DCC}.

\begin{mydef}[$k$-threshold]\label{def:k-threshold} For an increasing family $\cI \sub 2^X$ and a positive integer $k$, the \textit{$k$-threshold} of $\cI$, denoted by $p_k(\cI)$, is the infimum of the set of $p\in[0,1]$ such that
\beq{eq:k-thr}\text{for any decreasing $\cD \sub 2^X$ with $\mu_p(\cD)\ge 1-1/(2k)$, we have $\cD_{(k)} \cap \cI \ne \emptyset$.}\enq 
\end{mydef}

\noindent Note that \eqref{eq:k-thr} holds for every $p >p_k(\cI)$. 
Indeed, as $p$ increases, $\mu_p(\cD)$ decreases for every decreasing family $\cD$, so the collection of families satisfying $\mu_p(\cD) \ge 1-1/(2k)$ can only shrink. Furthermore, observe that for any increasing family $\cI$, $p_k(\cI)$ is non-increasing in $k$ since $\cD_{(k)}\subseteq \cD_{(k+1)}$ for any decreasing family $\cD$ and positive integer $k$.

Recall that the \textit{threshold}\footnote{This definition is finer than the original notion
of Erd\H{o}s and R\'enyi \cite{erdos1960evolution}, 
which defines $p^*=p^*(n)$ to be 
\emph{a threshold function} for the property $\cI=\cI_n$ if $\mu_p(\cI) \rightarrow 0$ when $p \ll p^*$ and $\mu_p(\cI) \rightarrow 1$ when $p \gg p^*$.
That $p_c(\cI)$ is always a threshold function for $\cI$ follows from \cite{bollobas1987threshold}.} for $\cI$, $p_c(\cI)$, is the unique $p$ such that $\mu_p(\cI)=1/2$. (Existence and uniqueness of $p_c$ follow from the fact that, for every nontrivial increasing family $\cI$, $\mu_p(\cI)$ is continuous and strictly increasing in $p$.) When $k=1$ the condition \eqref{eq:k-thr} fails exactly when $\mu_p(\cI^c)\ge 1/2$, so $p_1(\cI)=p_c(\cI)$, and $k$-thresholds are a generalization of the usual notion of thresholds. Following \cite{kahn2007thresholds}, define the \emph{expectation threshold} of a property $\cI\subseteq 2^X$ by \begin{equation*}
    q(\cI) = \max\{p: \mathcal I\text{ is $p$-small}\}.
\end{equation*}
By \eqref{eq:p-small}, we have $q(\cI) \leq p_c(\cI)$ for any increasing $\cI$.

We now present the reformulation of \cref{conj:DCC} using $k$-thresholds. 
\begin{conj}
\label{conj:DCC_k}There exist an integer $k\ge 2$ and a universal constant $L \ge 1$ such that, for every finite set $X$ and increasing family $\cI \sub 2^X$,
\beq{eq:reform_conc} p_k(\cI) \le Lq(\cI).\enq
\end{conj}
\begin{prop}\label{prop:equiv}    \Cref{conj:DCC} and \Cref{conj:DCC_k} are equivalent.
\end{prop}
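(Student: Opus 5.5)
The equivalence follows by unwinding the definitions in each direction, with the same $k$ and $L$. The only facts needed beyond the definitions are two monotonicity properties of $p$-smallness, which I would record first.

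\textbf{Monotonicity of smallness.} First, if $\cI' \sub \cI$ and $\cI$ is $p$-small, then $\cI'$ is $p$-small, since any cover of $\cI$ also covers $\cI'$. Second, if $\cI$ is $p$-small and $p' \le p$, then $\cI$ is $p'$-small. This holds because $\mu_{p'}(\langle S\rangle)=p'^{|S|}\le p^{|S|}=\mu_p(\langle S\rangle)$, so the same cover still works. In particular, $\cI$ is $p$-small for every $p \le q(\cI)$.

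\textbf{\Cref{conj:DCC} implies \Cref{conj:DCC_k}.} Take $k,L$ from \Cref{conj:DCC} and fix an increasing $\cI$. If $Lq(\cI)\ge 1$ there is nothing to prove, since $p_k(\cI)\le 1$. Otherwise, fix $p\in(Lq(\cI),1]$ and a decreasing $\cD$ with $\mu_p(\cD)\ge 1-1/(2k)$. Suppose for contradiction that $\cD_{(k)}\cap\cI=\emptyset$, i.e.\ $\cI\sub\cD^{(k)}$.

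\begin{itemize}
\item If $\cD$ is trivial, this is impossible. Indeed, $\cD=\emptyset$ contradicts the measure bound, and $\cD=2^X$ gives $\cD_{(k)}=2^X$, which meets the nonempty family $\cI$.
\item If $p<1$, \Cref{conj:DCC} says $\cD^{(k)}$ is $(p/L)$-small, hence so is $\cI$, giving $q(\cI)\ge p/L$. This contradicts $p>Lq(\cI)$.
\item If $p=1$, note that $\mu_1(\cD)>0$ forces $X\in\cD$. Then $\cD_{(k)}=2^X$, which again meets $\cI$.
\end{itemize}

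So \eqref{eq:k-thr} holds for every $p>Lq(\cI)$, and therefore $p_k(\cI)\le Lq(\cI)$.

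\textbf{\Cref{conj:DCC_k} implies \Cref{conj:DCC}.} Take $k,L$ from \Cref{conj:DCC_k}, and let $p\in(0,1)$ and $\cD$ be decreasing with $\mu_p(\cD)\ge 1-1/(2k)$. Set $\cI=\cD^{(k)}$, which is increasing.

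\begin{itemize}
\item If $\cI=\emptyset$, it is $(p/L)$-small via the empty cover.
\item $\cI\ne 2^X$, because $\cD\ne\emptyset$ forces $\emptyset\in\cD$, so $\emptyset\in\cD_{(k)}$.
\end{itemize}

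So we may assume $\cI$ is nontrivial. By construction $\cD_{(k)}\cap\cI=\emptyset$ while $\mu_p(\cD)\ge 1-1/(2k)$, so \eqref{eq:k-thr} fails at $p$. Since \eqref{eq:k-thr} holds for all $p'>p_k(\cI)$, this forces $p\le p_k(\cI)\le Lq(\cI)$. Hence $p/L\le q(\cI)$, and by the second monotonicity fact $\cI=\cD^{(k)}$ is $(p/L)$-small.

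\textbf{Expected obstacle.} There is no substantive difficulty. The points needing care are the degenerate cases (trivial $\cD$ or $\cI$, and $p=1$ or $Lq\ge1$), and the fact that $q(\cI)$ is attained as a maximum, so that $\cI$ is genuinely $q(\cI)$-small. The maximum exists because the set of $p$ for which $\cI$ is $p$-small is a down-set in $p$, and it is closed: over the finitely many possible covers $\cG\sub 2^X$, the condition $\sum_{S\in\cG}p^{|S|}\le 1/2$ is closed in $p$.
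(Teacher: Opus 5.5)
Your proof is correct and follows the paper's argument in both directions. Your first direction is just the contrapositive of the paper's step ``take $p<p_k(\cI)$ and extract a bad $\cD$.'' Your extra care with degenerate cases ($p=1$, trivial $\cD$ or $\cI$), together with your remark that $q(\cI)$ is attained and $p$-smallness is downward-monotone in $p$, makes explicit what the paper's final inference ``$q(\cI)\ge p/L$, hence $\cI$ is $(p/L)$-small'' uses implicitly.
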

The proof is a short argument using the definitions, given in \Cref{sec:main proofs}.

\begin{remark}\label{rmk:DCC_tightness}
The bound in \cref{conj:DCC_k}, if true, is best possible up
to a constant factor. Indeed, let
$\cI=\{A\subseteq X:A\ne \emptyset\}$, where $|X|=n$.
Then $q(\cI)=1/(2n)$, since any $p$-small cover of $\cI$ must cover each
singleton. On the other hand, \eqref{eq:k-thr} fails exactly when
$\cD=\{\emptyset\}$ has $\mu_p(\cD)\ge 1-1/(2k)$, and hence
\[
    p_k(\cI)=1-\left(1-1/(2k)\right)^{1/n}=\Theta_k(1/n)=\Theta_k(q(\cI)).
\]
Thus, for any fixed $k$, the right-hand side of \eqref{eq:reform_conc} cannot be replaced by $o(q(\cI))$. 
\end{remark}

For comparison with known results, the recent resolution of the convexity conjecture by Hua, Song, and Tudose~\cite{HST26} implies that there exist an integer $k \ge 2$ and a universal constant $\alpha \in (0,1)$ such that $p_k(\cI) \le q(\cI)^\alpha$ for every finite set $X$ and increasing family $\cI \sub 2^X$. At $k=1$, 
the Kahn--Kalai Conjecture \cite{kahn2007thresholds}, proved by Pham and the third author \cite{park2024proof}, gives that $p_1(\cI) = O(q(\cI) \log \ell(\mathcal I))$ where $\ell(\cI)$ is the maximum of $2$ and the size of a largest minimal element of $\cI$. The logarithmic factor is necessary for certain families; 
see, e.g., \cite{frankston2021thresholds,park2023threshold,park2024proof}.  Consequently, \Cref{conj:DCC_k} cannot hold with $k=1$.

\subsection{Reduction to ordinary thresholds}\label{sec:main results1} Building on the reformulation, we prove a general reduction that bounds the $k$-threshold of any increasing \textit{graph property} in terms of ordinary thresholds of graphs in suitable decompositions of its members. We first present the relevant definitions.
We identify each element of $2^{\binom{[n]}2}$ with a labeled graph on the fixed vertex set $[n]$. We also consider graphs on at most $n$ vertices, not necessarily with vertex set $[n]$, typically as target graphs for containment properties or as subgraphs of graphs on $[n]$. For a graph $H$, let $v_H$ and $e_H$ denote its numbers of vertices and edges, respectively.  A graph  $J$ is a \textit{subgraph} of $H$ if it can be obtained from $H$ by deleting vertices, edges, or both. 
We say that $G$ \textit{contains a copy} of $H$ if $G$ has a subgraph isomorphic to $H$.

For two graphs $G, G' \in 2^{\binom{[n]}{2}}$, we say that $G'$ is a \emph{relabeling} of $G$ if there is a permutation $\pi$ of $[n]$ such that for $u,v\in [n]$, we have $\{u,v\}\in E(G)$ if and only if $\{\pi(u),\pi(v)\}\in E(G')$. 
A family of graphs $\cG\subseteq 2^{\binom{[n]}{2}}$ is a \emph{graph property} if $G\in \cG$ implies $G'\in\cG$ for all relabelings $G'$ of $G$. 
In particular, for a given graph $H$ on at most $n$ vertices, the \textit{graph containment property} $\cI_H~(\sub 2^{\binom{[n]}2})$ is the family of all graphs in $2^{\binom{[n]}{2}}$ containing a copy of $H$. 

We use the shorthands $q(H):=q(\cI_H), p_c(H):=p_c(\cI_H)$, and so forth; if $H$ is edgeless, then $\cI_H = 2^{\binom{[n]}2}$. Although we generally consider thresholds only for nontrivial families, it is convenient to adopt the convention that $p_k(H)=p_c(H) =q(H) = 0$ in this case.

It is sometimes convenient to enlarge a target graph $H$ to an $n$-vertex graph by adding $n-v_H$ isolated vertices. For a graph $H$ on at most $n$ vertices, let $H^+$ denote the resulting $n$-vertex graph. Since an $n$-vertex graph contains a copy of $H$ if and only if it contains a copy of $H^+$, we have 
$\cI_{H} = \cI_{H^+}$, so $p_c(H)=p_c(H^+)$ for every graph $H$.

A \emph{$k$-decomposition} of a graph $H$ is a sequence of subgraphs $(H_1, \ldots, H_k)$ of $H$ such that 
$E(H_1) \cup \ldots \cup E(H_k) = E(H)$. 
We formulate the definition in terms of edge sets so that isolated vertices of $H$ need not appear in any of the $H_i$. Accordingly, when discussing decompositions, we often write $H=H_1 \cup \cdots \cup H_m$ as shorthand for $E(H)=E(H_1) \cup \cdots \cup E(H_m)$, with isolated vertices understood to be ignored.

\begin{mydef}
Given a graph $H$, define $r_k(H)$ to be the minimum of $\max_{i \in [k]} p_c(H_i)$ over all $k$-decompositions $(H_1, \ldots, H_k)$ of $H$.
\end{mydef}

\noindent Note that, since adding or deleting isolated vertices does not affect ordinary thresholds or $k$-decompositions, $r_k(H)=r_k(H^+)$ for every $H$.

We now state the reduction theorem that underlies our applications. Although it applies to any increasing graph property $\cI$, in all applications in \Cref{sec:main results2}, we take $\cI$ to be a graph containment property $\cI_H$.

\begin{thm}\label{thm:k-thr}
Let $k$ be a positive integer and $\cI$ be an increasing graph property. Then, for any $H \in \cI$,
\[ p_{k}(\cI) \le r_k(H).\]
Consequently, $p_k(\cI)\le \min_{H\in\cI} r_k(H)$.
\end{thm}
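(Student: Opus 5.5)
Fix $H\in\cI$ and a $k$-decomposition $(H_1,\dots,H_k)$ of $H$ attaining $r_k(H)$. Let $p>\max_i p_c(H_i)$ and let $\cD$ be decreasing with $\mu_p(\cD)\ge 1-1/(2k)$. The goal is to show $\cD_{(k)}\cap\cI\ne\emptyset$. Since $\cI$ is a graph property and is increasing, it suffices to find a single permutation $\pi$ of $[n]$ such that, for each $i$, the relabeled copy $\pi(H_i^+)$ lies in $\cD$. Then $\pi(H)=\bigcup_i\pi(H_i^+)$ belongs to $\cD_{(k)}$. It is also a relabeling of $H^+$, hence in $\cI$; here we view $H$ as an element of $2^{\binom{[n]}2}$, since $H\in\cI$.

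The plan is to choose $\pi$ uniformly at random and use a union bound. Fix $i$. Let $G=G(n,p)$ and let $\pi$ be an independent uniform permutation. The event $\pi(H_i^+)\notin\cD$ should be compared with the event $G\notin\cD$. Consider the pair $(G,\pi)$. Because $p>p_c(H_i)$, we have $\pr(G\supseteq \text{some copy of } H_i)>1/2$. The natural coupling is: given $G$, the random copy $\pi(H_i^+)$ is a uniformly random labeled copy of $H_i^+$, and we need $\pi(H_i^+)\sub G$ to hold with decent probability. That is not directly true, since containment of some copy does not make a random copy lie in $G$. So instead I would argue by symmetry of $\cD$'s complement against a random copy, as follows.

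Let $\cD'=\{D:\ \pi(H_i^+)\in\cD\}$; this is not the right object. The better route is to define $a_i=\pr_\pi(\pi(H_i^+)\notin\cD)$. Let $\cU_i=\{G: G\supseteq\sigma(H_i^+)\text{ for some }\sigma \text{ with } \sigma(H_i^+)\notin\cD\}$. This family is increasing, and since $\cD$ is decreasing, $G\in\cU_i$ forces $G\notin\cD$. Hence $\mu_p(\cU_i)\le 1/(2k)$. I need to turn this into $a_i<1/k$. Note that $\mu_p(\cU_i)$ depends only on the family $\cB_i=\{\sigma:\sigma(H_i^+)\notin\cD\}$, and $a_i=|\cB_i|/n!$. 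The key step is a lower bound: if a fraction $a$ of the copies of $H_i$ are ``bad'', then $G(n,p)$ contains a bad copy with probability $\ge$ something like $\mu_p(\cI_{H_i})\cdot a$. This follows from a symmetrization. Pick $\tau$ uniform and independent of $G$, and look at $\tau(G)$, which is again distributed as $G(n,p)$. On the event that $G$ contains a copy $\sigma_0(H_i^+)$ (chosen by a fixed rule), $\tau\sigma_0$ is uniform, so it is bad with probability $a_i$. This gives $\mu_p(\cU_i)\ge \mu_p(\cI_{H_i})\,a_i> a_i/2$, and therefore $a_i<1/k$.

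Finally apply the union bound: $\pr_\pi(\exists i:\pi(H_i^+)\notin\cD)<k\cdot(1/k)=1$, so a good $\pi$ exists. Since $p>r_k(H)$ was arbitrary, this gives $p_k(\cI)\le r_k(H)$, and minimizing over $H\in\cI$ gives the consequence. One care point is that edgeless $H_i$ have the convention $p_c=0$; these are trivial, since the empty graph lies in $\cD$ whenever $\cD\ne\emptyset$. The main obstacle is the symmetrization step, namely verifying that $\tau\sigma_0$ is uniform independently of $\tau(G)$'s law. This should hold because $\sigma_0$ is a function of $G$, and $\tau$ is independent of $G$ and uniform. The strictness needed for $a_i<1/k$ comes from $\mu_p(\cI_{H_i})>1/2$, where we use $p>p_c(H_i)$ and strict monotonicity.
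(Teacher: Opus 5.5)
Your proof is correct and is essentially the paper's proof: a uniformly random relabeling with a union bound over the $k$ pieces, where the per-piece bound $\pr_\pi(\pi(H_i)\notin\cD)<1/k$ comes from three facts: permutation-invariance of $\mu_p$, $\cD$ being decreasing, and $\mu_p(\cI_{H_i})>1/2$. The paper handles the middle step by proving $\int\perm(F)\,d\mu_p(F)=\mu_p(\cD)$ and then using a Markov-type argument to find some $G_i\in\cI_{H_i}$ with $\perm(G_i)>1-1/k$, while your coupling gives the more direct chain $\mu_p(\cI_{H_i})\,a_i\le\mu_p(\mathcal U_i)\le 1-\mu_p(\cD)$; the symmetrization step you flag needs only that $\tau\circ\sigma_0$ is uniform conditionally on $G$ (together with $\tau(G)\sim\mu_p$), not independence from $\tau(G)$, which in general fails.
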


\Cref{thm:k-thr} reduces the somewhat abstract problem of bounding $p_k(\cI)$ to a concrete graph-decomposition problem involving ordinary thresholds. In particular, a suitable decomposition of a single member $G \in \cI$ is enough to bound the $k$-threshold of the entire property $\cI$.

For graph containment properties, the reduction takes the following concrete form. Let $(H_1, \ldots, H_k)$ be a $k$-decomposition of a target graph $H$, and suppose that each $H_i$ appears in $G_{n,p}$ with probability greater than $1/2$. Equivalently, $p>p_c(H_i)$ for every $i \in [k]$, and hence $r_k(H) \le \max_{i \in [k]} p_c(H_i)<p$. 
Now applying \Cref{thm:k-thr} to $H^+ \in \cI_H$, we obtain
\[p_k(H) \le r_k(H^+)=r_k(H)<p.\] Equivalently, every decreasing family $\cD \sub 2^{\binom{[n]}2}$ with $\mu_p(\cD) \ge 1-1/(2k)$ has the property that $\cD_{(k)}$ contains a copy of $H$.

\subsection{Applications to graph containment properties}\label{sec:main results2} We apply \Cref{thm:k-thr} to obtain bounds for several classes of graph containment properties. Throughout this subsection, $n$ tends to infinity, and the target graph $H$ may depend on $n$ unless stated otherwise.

\subsubsection*{Fixed graphs}
We begin with the case in which 
$H$ is a \emph{fixed} graph, meaning that $H$ does not depend on $n$. 
In this setting, for every fixed positive integer $k$, we determine the correct order of magnitude of $p_k(H)$ by proving matching upper and lower bounds in terms of a natural $k$-density parameter $d_k(H)$.

Define 
the \textit{maximum density} of $H$ by
\[d(H)=\max\{e_J/v_J: J \text{ is a subgraph of $H$ with } v_J \ge 1\}.\]
Classical results of 
Erd\H{o}s and R\'enyi \cite{erdos1960evolution} and Bollob\'as \cite{bollobas1981random} show that the threshold for $H$-containment is determined by $d(H)$; namely, $p_c(H)=\Theta(n^{-1/d(H)})$ where the implicit constants depend only on $H$.\footnote{While this order-of-magnitude form is sufficient for our purpose, we note that much sharper results are known for fixed graphs; see, e.g., \cite[Chapter~3]{janson2011random} and the references therein.} We prove a $k$-threshold analogue of this theorem. 
\begin{mydef}
    Given a graph $H$, define the \textit{$k$-density} of $H$, $d_k(H)$, to be the minimum of $\max_{i \in [k]} d(H_i)$ over all $k$-decompositions $(H_1, \ldots, H_k)$ of $H$.
\end{mydef} 

\begin{thm}\label{thm:small graphs}  Let $H$ be a fixed graph with at least one edge. For every positive integer $k$,
\[p_k(H)=\Theta(n^{-1/d_k(H)}),\] 
where the implicit constants depend only on $H$ and $k$.
\end{thm}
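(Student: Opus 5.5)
The plan is to prove the upper and lower bounds separately.

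\emph{Upper bound.} This follows directly from \Cref{thm:k-thr} and the classical Erd\H{o}s--R\'enyi--Bollob\'as theorem. Fix a $k$-decomposition $(H_1,\dots,H_k)$ of $H$ attaining $d_k(H)=\max_i d(H_i)$. Since $H$ is fixed, there are finitely many decompositions, so the minimum exists. For each $H_i$ with at least one edge we have $p_c(H_i)=\Theta(n^{-1/d(H_i)})$, with constants depending only on $H_i$, hence only on $H$. Edgeless $H_i$ contribute $0$ by convention. Therefore
\[
r_k(H)\le \max_i p_c(H_i)=O\bigl(n^{-1/d_k(H)}\bigr),
\]
and applying \Cref{thm:k-thr} to $H^+\in\cI_H$ gives $p_k(H)\le r_k(H)=O(n^{-1/d_k(H)})$.

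\emph{Lower bound.} Set $p=cn^{-1/d_k(H)}$ with $c=c(H,k)$ small. We need to exhibit a decreasing $\cD$ with $\mu_p(\cD)\ge 1-1/(2k)$ such that no union of $k$ members of $\cD$ contains a copy of $H$. The natural choice is
\[
\cD=\{G : G \text{ contains no copy of any } J\in\mathcal J\},
\]
where $\mathcal J$ is the finite set of all subgraphs $J$ of $H$ (with $v_J\ge1$) satisfying $e_J/v_J> d_k(H)$. This family is clearly decreasing.

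\emph{Measure of $\cD$.} For each $J\in\mathcal J$, the expected number of copies of $J$ in $G_{n,p}$ is at most
\[
n^{v_J}p^{e_J}=c^{e_J}\,n^{v_J-e_J/d_k(H)}\le c^{e_J},
\]
because $e_J/d_k(H)> v_J$. By Markov's inequality and a union bound over the finitely many $J\in\mathcal J$, choosing $c$ small gives $\mu_p(\cD)\ge 1-1/(2k)$.

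\emph{The key combinatorial step.} Suppose $G=D_1\cup\dots\cup D_k$ with each $D_i\in\cD$, and suppose $G$ contains a copy $H'$ of $H$. Put $H_i=H'\cap D_i$, meaning the edges of $H'$ lying in $D_i$. This gives a $k$-decomposition of $H'\cong H$. By the definition of $d_k$, some $H_i$ has $d(H_i)\ge d_k(H)$, so some subgraph $J$ of $H_i$ has $e_J/v_J\ge d_k(H)$. Such a $J$ is a subgraph of $H$ contained in $D_i$.

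The main obstacle is that this gives only the non-strict inequality $\ge$, whereas $\mathcal J$ uses the strict inequality $>$. Subgraphs $J$ with $e_J/v_J=d_k(H)$ exactly are balanced-type copies whose expected count is $\Theta(c^{e_J})$ rather than $o(1)$. I would fix this by enlarging $\mathcal J$ to include all subgraphs of $H$ with $e_J/v_J\ge d_k(H)$. The first moment computation above then still gives an expected count of at most $c^{e_J}n^{0}=c^{e_J}$, which is small for small $c$. So the union bound goes through with $\ge$ as well, and the contradiction is complete: no union of $k$ members of $\cD$ contains $H$.

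This shows that \eqref{eq:k-thr} fails at $p$, so $p_k(H)\ge p=\Omega(n^{-1/d_k(H)})$. Combined with the upper bound, this proves the statement.
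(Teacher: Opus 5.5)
Your proposal is correct and takes essentially the same route as the paper. The upper bound comes from the reduction theorem combined with the Erd\H{o}s--R\'enyi--Bollob\'as estimate. The lower bound uses a first-moment, Markov and union-bound construction of $\cD$, and then pulls back the decomposition $(E(H')\cap D_i)_{i\in[k]}$ to force some $D_i$ to contain a subgraph of density at least $d_k(H)$.

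The only difference is cosmetic. You forbid every subgraph of $H$ with density at least $d_k(H)$ directly. The paper instead forbids one chosen piece $H(\mathfrak D)$ with $d(H(\mathfrak D))\ge d_k(H)$ for each $k$-decomposition, and bounds $\mu_p(\cI_{H(\mathfrak D)})$ through its densest subgraph. Your final version, which uses the non-strict inequality, is exactly what the argument needs.
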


\medskip

\subsubsection*{Beyond fixed graphs}

We next establish 
\Cref{conj:DCC_k} for several graph containment properties whose target graphs $H$ grow with $n$. 

We first prove \cref{conj:DCC_k} with $k=2$ when the target graph belongs to one of several classical spanning graph classes. 
Recall that an \textit{$F$-factor} on $[n]$ is a collection of $n/v_F$ copies of $F$ whose vertex sets partition $[n]$, where we always assume $v_F |n$. For example, if $F$ is a single edge, then an $F$-factor is a perfect matching. 

\begin{thm}\label{thm:spanning graphs}
Let $H$ be a spanning tree whose maximum degree is bounded by a fixed constant, 
the $m$-th power of a Hamiltonian cycle for some fixed $m \ge 1$, or an $F$-factor for some fixed graph $F$ with at least one edge. Then $p_2(H) = O(q(H))$, where the implicit constant may depend on the maximum degree bound, $m$, or $F$, as appropriate, but not on $n$.
\end{thm}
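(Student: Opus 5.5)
Via \Cref{thm:k-thr} applied to $H^+\in\cI_H$ with $k=2$, it suffices in each case to exhibit a $2$-decomposition $H=H_1\cup H_2$ with $\max_i p_c(H_i)=O(q(H))$. The lower side is easy: by \eqref{eq:p-small}, $q(H)\le p_c(H)$. For the upper side we need the known values of $q(H)$. For a bounded-degree spanning tree, $q(H)=\Theta(1/n)$ (the cover by single edges at $p\approx 1/n^2$ is far too weak; the correct order comes from the at most $n^{n-2}$ labeled spanning trees, giving $q=\Omega(1/n)$, while $q\le p_c(\text{edges}) = O(1/n)$ from the $n-1$ edges needed in expectation). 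For the $m$-th power of a Hamiltonian cycle, $q(H)=\Theta(n^{-1/m})$. For an $F$-factor, $q(H)=\Theta(n^{-1/d_1^*(F)})$ up to constants, where $d_1^*$ is the appropriate ``$1$-density'' $\max_J e_J/(v_J-1)$. All of these come from counting copies, i.e.\ the fractional expectation threshold and first-moment bounds. We then need a decomposition into two pieces whose ordinary thresholds lose no logarithm, since the known $p_c$ values of these $H$ carry a $\log n$ factor (isolated vertices, or vertices not covered by copies of $F$).

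The key idea is that the logarithm in $p_c$ comes only from the \emph{spanning} requirement (every vertex must be covered), while almost-spanning structures have threshold $O(q)$. So we split $H$ into two subgraphs $H_1,H_2$, each of which misses a linear number of vertices, or more precisely each consists of pieces which individually appear at the expectation threshold without any coupon-collector obstruction. We proceed as follows.

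\begin{itemize}
\item \textbf{$F$-factor.} Split the $n/v_F$ copies of $F$ into two halves, so that $H_1$ and $H_2$ are each a collection of $n/(2v_F)$ vertex-disjoint copies of $F$. Each is a ``half-factor.'' Known results on almost-spanning factors (e.g.\ via Janson's inequality and greedy or Ruci\'nski-type arguments, or the spread method of Frankston--Kahn--Narayanan--Park with the log removed in the non-spanning regime) give $p_c(H_i)=O(n^{-1/d_1^*(F)})=O(q(H))$. Note that an $F$-factor with $d_1^*$ attained on a proper subgraph needs care, but the half-factor threshold equals the factor's $q$ up to constants.

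\item \textbf{Power of a Hamiltonian cycle.} Cut $C_n^m$ into two halves along the cycle: $H_1$ is the $m$-th power of a path on the first $\approx n/2+m$ vertices, and $H_2$ covers the rest together with the wrap-around edges. Each is an $m$-th power of a path on about $n/2$ vertices, whose threshold is $O(n^{-1/m})$ by known results (e.g.\ Kahn--Narayanan--Park for the square, and spread-based arguments for general $m$, where the non-spanning case avoids the log).

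\item \textbf{Bounded-degree spanning tree.} Use a separator: remove a suitable set to split $T$ into two forests $T_1,T_2$, each with at most roughly $2n/3$ vertices (possible since the maximum degree is bounded). Then almost-spanning bounded-degree trees and forests appear at $p=C/n$, by Alon--Krivelevich--Sudakov-type results.
\end{itemize}

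The main obstacle is ensuring that the ordinary thresholds of these pieces are genuinely $O(q)$ without a log factor, which requires citing or proving almost-spanning embedding results. The most delicate case is the general $m$-th power and the general $F$ (with $F$ not strictly balanced). I would first try the spread lemma in its almost-spanning form: the Pham--Park proof gives $p_c=O(q)$ when the uniform measure on copies is $O(q)$-spread and the copies have size $\ell$ with a constant fraction tolerance. Failing that, I would fall back on the literature.
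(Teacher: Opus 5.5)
Your skeleton matches the paper's: \Cref{thm:k-thr} with $k=2$, two pieces each spanning at most $(1-\Omega(1))n$ vertices, \Cref{thm:embedding_trees} for trees and paths, and a Janson-plus-greedy argument for half-factors. There is a gap, though. The one step the paper actually proves rather than cites, namely $p_c(H_1)=O(n^{-1/m_1(F)})$ for a half $F$-factor, is exactly the step you leave to ``the literature,'' with a worry about non-balanced $F$.

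The Janson route you name closes it uniformly for every $F$. Greedily extracting vertex-disjoint copies of $F$ can stop before $\lceil n/(2v_F)\rceil$ copies only if some set of $n/2$ vertices spans no copy of $F$. Fix such a set $S$ and take $p=Cn^{-1/m_1(F)}$.
\begin{itemize}
\item Since $m_1(F)\ge e_F/(v_F-1)$, the expected number of copies of $F$ in $S$ is $\mu=\Omega(C^{e_F}n)$.
\item Pairs of copies overlapping in a graph $J$ contribute $O(\mu^2/(n^{v_J}p^{e_J}))$ to $\Delta$. Since $p\ge Cn^{-(v_J-1)/e_J}$, we have $n^{v_J}p^{e_J}\ge C^{e_J}n$, so each such contribution is $O(C^{-1}\mu^2/n)$.
\end{itemize}
\Cref{thm:Janson} or \Cref{thm:extJanson} then gives failure probability $e^{-\Omega_C(n)}$, which beats the union bound over $\binom{n}{n/2}\le 2^n$ sets. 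The $v_J-1$ normalization in $m_1$ is exactly what makes the exponent linear in $n$, so no balancedness assumption is needed. Your spread-lemma fallback does not substitute for this, even in a form tolerating a constant fraction of missing edges. It bounds $p_c$ by the spread of the uniform measure on copies, which you would still have to compute and then compare with a cover-certified lower bound on $q$.

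Several supporting claims also need correcting.
\begin{itemize}
\item \emph{Lower bounds on $q$.} These must come from covers, i.e.\ from $\pE(H)\le q(H)$ via a first-moment count for a suitable subgraph. They cannot come from the fractional expectation threshold, since $q\le q_f$ points the wrong way.
\item \emph{The right subgraph for $F$-factors.} Counting copies of $H$ itself only gives $q(H)=\Omega(n^{-(v_F-1)/e_F})$, which is polynomially too weak when $F$ is not $1$-balanced. You need the subgraph $I$ formed by one copy of an $m_1$-extremal $J$ inside each copy of $F$, together with $\aut(I)\ge (n/v_F)!\,\aut(J)^{n/v_F}$.
\item \emph{Cycle powers with $m\ge2$.} Your premise that $p_c(H)$ carries a logarithm is false here. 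Kahn--Narayanan--Park ($m=2$) and Riordan ($m\ge3$) give $p_c(H)=O(n^{-1/m})$, so $p_2(H)\le p_1(H)=p_c(H)$ finishes with no decomposition. Your ``spread-based, non-spanning'' justification for path powers with $m\ge 3$ is not an available result, whereas Riordan's theorem plus monotonicity gives it immediately.
\item \emph{Trees.} The split must cover every edge. Take a centroid $v$ and put $v$ into both parts; this yields two subtrees on fewer than $2n/3+1$ vertices. No degree bound is needed for this split, only for applying \Cref{thm:embedding_trees}.
\end{itemize}
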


Allowing larger values of $k$, we next establish 
\cref{conj:DCC_k} for a broader class of graph containment properties.
Recall that the \emph{degeneracy} of a graph $H$, denoted by $\d(H)$, 
is the smallest integer $d$ such that every nonempty subgraph of $H$ has minimum degree at most $d$. Equivalently, 
$\d(H)$ is the smallest $d$ for which the vertices of $H$ admit an ordering such that every vertex has at most $d$ neighbors preceding it.

\begin{thm}\label{thm:bounded degeneracy}
For every positive integer  $D$, there exist a positive integer $k=k(D)$ and a constant $L=L(D)$ such that every graph  $H$ with $\d(H)\leq D$ satisfies $p_k(H) \le L q(H)$.
\end{thm}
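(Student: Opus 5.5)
The plan is to apply \Cref{thm:k-thr} to $\cI_H$ with the member $H^+$. This gives $p_k(H)\le r_k(H)$. So it suffices to find, for $k=k(D)$, a $k$-decomposition $H=H_1\cup\cdots\cup H_k$ in which every piece satisfies $p_c(H_i)\le L\,q(H)$.

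The natural choice is to take the $H_i$ so structured that their ordinary thresholds have no logarithmic loss over the expectation threshold of $H$ itself. Fix a degeneracy ordering $v_1,\dots,v_{v_H}$ of $H$, in which each vertex has at most $D$ back-neighbors. Orient each edge from its later endpoint to its earlier one. Then each vertex has out-degree at most $D$, and by labeling the back-edges of each vertex we split $E(H)$ into $D$ graphs $F_1,\dots,F_D$. Each $F_j$ is a forest of ``in-stars'': every vertex has at most one back-neighbor in $F_j$, so each $F_j$ is a forest. We will refine this further, because forests with high-degree vertices (e.g.\ stars) have thresholds like $p_c(K_{1,\Delta})$, and we must compare these with $q(H)$. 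We split each forest $F_j$ into a bounded number $c$ of subforests. Each subforest is a disjoint union of stars, or more robustly has components of bounded depth, obtained by coloring levels of the rooted trees modulo $2$. The result is $k=2D$ pieces, each a vertex-disjoint union of stars (every edge joins a vertex at depth $t$ to its parent at depth $t-1$, and grouping by parity of $t$ gives star forests).

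The key remaining step is to show $p_c(S)=O(q(H))$ for every star forest $S\subseteq H$ (all constants depending on $D$). For this, the first tool is the lower bound on $q(H)$ coming from subgraph counting: for any subgraph $J\subseteq H$, one has $q(H)\ge q(J)$, and $q(J)\gtrsim \min$ over $J'\subseteq J$ of the $p$ solving $\EE[\#\text{copies of }J' \text{ in }G_{n,p}]=1$, up to constants (the standard ``expectation of subgraphs'' bound, since $\cI_H\subseteq\cI_J$ and a cover by copies of $J'$ certifies smallness). Conversely, the threshold of a star forest with star sizes $s_1,\dots,s_m$ on $[n]$ is determined, up to constant factors and without logarithmic loss as long as the forest is not spanning-like with many leaves, by the first-moment densities of its sub-star-forests. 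In the spanning regime, for example a perfect matching, there is a genuine $\log n$ factor: $p_c\asymp \log n/n$, while $q$ for a perfect matching is $\asymp 1/n$. So star forests must not be allowed to cover almost all vertices with tiny stars. This forces a more careful decomposition: we should allow the pieces to be chosen so that each piece leaves a linear fraction of vertices isolated, or else is absorbed by using the coupon-collector obstruction that $q(H)$ itself sees. Note that $q(H)\gtrsim \log n/n$ fails for a perfect matching too, which is exactly why $k\ge2$ is needed. The resolution is to split each star forest further into $O(1)$ pieces, each covering at most $n/2$ vertices. By Janson-type or spread-measure arguments (e.g.\ the spread lemma underlying \cite{park2024proof}, where a $q$-spread measure on copies of a non-spanning star forest gives threshold $O(q)$ because $\ell$ can be effectively replaced by constants via the fractional version), such a piece has $p_c=O(q(\text{piece}))=O(q(H))$.

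I expect this last step to be the main obstacle: showing rigorously that a star forest (or bounded-depth forest) occupying at most half the vertices has threshold within a constant of its expectation threshold. It needs to work uniformly, with arbitrary star sizes up to $n$. I would first try a direct second-moment or Janson computation on the number of copies, handling the big stars separately: stars of size $s$ need degrees about $s$, and those count via a Chernoff bound, which matches $q$. If that fails, I would fall back on an explicit greedy embedding in $G_{n,p}$ that exploits the free half of the vertex set.
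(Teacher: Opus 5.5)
Your reduction steps are sound and track the paper closely. Splitting along a degeneracy ordering gives $D$ forests; the paper uses Nash-Williams instead, but your route works equally well. The parity-of-depth split into star forests is exactly Step~1 of the proof of Lemma~\ref{lem:all trees}. A first-fit packing that splits at most one star per piece cuts each star forest into $O(1)$ star forests on at most $n/2$ vertices, so $k=O(D)$. (A minor slip: the subgraph-counting bound is $q(H)\ge \pE(H)$, a \emph{maximum} over subgraphs, not a minimum.)

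The genuine gap is the step you flag yourself. You never prove that a star forest $S\subseteq H$ on at most $n/2$ vertices has $p_c(S)=O(q(H))$ uniformly in the star sizes, and this is the entire analytic content of the theorem. None of the tools you name delivers it:
\begin{itemize}
\item The theorems of \cite{frankston2021thresholds,park2024proof} give $p_c=O(q_f\log \ell)$ and $p_c=O(q\log\ell)$. For a star forest with $\Theta(n)$ edges this is a $\log n$ loss. Nothing in those arguments lets you replace $\ell$ by a constant merely because $S$ is non-spanning. Moreover, a spread-measure bound is phrased via $q_f\ge q$, which is the wrong direction.
\item A plain second-moment or Janson count of copies of a structure with $\Theta(n)$ edges at $p=\Theta(1/n)$ breaks down, because the count is not concentrated.
\item A Chernoff bound on degrees cannot handle stars of degree $\ell\gg np$, which no typical vertex can host.
\end{itemize}

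What is missing is a quantitative link between $\pE$ and the degree profile of $S$. This link is what makes your fallback, a greedy embedding into the unused vertices, actually work.
\begin{itemize}
\item Set $f=n\tilde\pE(S)$ and split $S$ into two further pieces: stars of degree at most $c_1f$, and the rest.
\item The low-degree stars embed greedily with high probability at $p=cf/n$, using the linear reserve of unused vertices and Chernoff.
\item For the high-degree stars, let $M_\ell$ be the number of stars of degree at least $\ell$. The subgraph of $S$ formed by $M_\ell$ disjoint $\ell$-stars gives $\tilde\pE(S)\ge e^{-2}\ell M_\ell^{1/\ell}/n^{1+1/\ell}$, i.e.\ $M_\ell\le n(e^2f/\ell)^\ell$.
\end{itemize}
This bound on $M_\ell$ yields two facts:
\begin{itemize}
\item[(a)] The high-degree stars cover at most $\delta n$ vertices, with $\delta\to 0$ as $c_1\to\infty$.
\item[(b)] Each freshly inspected candidate center has at least $\ell$ free neighbors with conditional probability $r_\ell\ge\min\{1/6,\,c_3^{\ell}M_\ell/n\}$ for a large constant $c_3$. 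This uses Paley--Zygmund when $\ell\le np/6$, and otherwise the \emph{lower} tail bound $\mathbb{P}(\mathrm{Bin}(n/3,p)\ge \ell)\ge \binom{n/3}{\ell}p^\ell(1-p)^{n/3}$.
\end{itemize}
Let $N_\ell\le M_\ell$ count the stars of degree exactly $\ell$. Then the expected number of vertices the greedy procedure inspects is at most $\sum_\ell N_\ell/r_\ell\le n/4$, and Markov's inequality finishes. Without (a) and (b), ``embed greedily using the free half'' is not yet an argument.
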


We further extend this result to graphs with degeneracy up to order $\log n/\log\log n$, at the cost of imposing a maximum-degree condition.

\begin{thm}\label{thm:general graph containment}
For every $C, \eps>0$, there exist a positive integer $k=k(C,\eps)$ and a constant $L=L(C,\eps)$ such that every graph $H$ with $\d(H) \leq C\frac{\log n}{\log\log n}$ and maximum degree at most $n^{1-\epsilon}$ satisfies $p_k(H) \le Lq(H)$.
\end{thm}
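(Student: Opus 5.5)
Since \Cref{thm:k-thr} gives $p_k(H)\le r_k(H)$, the plan is to build a $k$-decomposition $H=H_1\cup\cdots\cup H_k$ with $\max_i p_c(H_i)=O(q(H))$. The decomposition will cut $H$ into pieces of very low degeneracy and bounded local density, for which $p_c$ is within a constant of $q$, and we will control $q(H_i)$ by $q(H)$. The lower-bound side is cheap. By monotonicity $q(H_i)\le q(H)$, since $\cI_H\sub\cI_{H_i}$ and a cover of $\cI_{H_i}$ is also a cover of $\cI_H$. So it suffices to find a decomposition with $p_c(H_i)\le L\,q(H_i)$ for each $i$. By Park--Pham this holds up to a factor $\log v_{H_i}$, which is too much, so the pieces have to be chosen so that each $p_c(H_i)$ can be bounded directly.

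First, degeneracy splitting. Fix a $D$-degenerate ordering of $V(H)$, so each vertex $v$ has at most $D$ back-neighbours. We assign the back-edges of $v$ injectively to colours $1,\dots,D$, which gives $H=F_1\cup\dots\cup F_D$. In each $F_j$ every vertex has at most one back-edge, so each $F_j$ is a forest. Second, we further split each forest into $O(1)$ pieces with a simple structure whose threshold is known up to constants. The intended pieces are star forests, obtained by splitting each tree by the parity of depth. Star forests can be split further into a bounded number of groups by star size, dyadically. That step would need $O(\log n)$ colours, so instead we use the expectation-threshold computation directly.

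For a star forest $S$ with stars of sizes $s_1,\dots$, the quantity $q(S)$ is essentially determined by the counts $N_s$ of stars of size $s$. Concretely, $p_c(S)$ is the largest over subforests of the threshold for having $N_{\ge s}$ vertex-disjoint stars of size $\ge s$. A greedy / second-moment argument shows this equals $\Theta(\max_s \text{(first-moment } p))=\Theta(q(S))$, and I expect no logarithm for star forests because the spread is uniform. Combining, $k=2D$ and
\[
p_k(H)\le \max_{i} p_c(H_i)\le L\max_i q(H_i)\le L\,q(H).
\]

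The main obstacle is the claim that $p_c=O(q)$ for star forests, or for whatever bounded-complexity pieces are chosen. Star forests include perfect matchings, where $p_c\asymp \log n/n$ but $q\asymp 1/n$. So a spanning perfect matching piece would fail, and the decomposition must avoid producing large matchings, or spanning pieces in general. The fix I would try first is to use more colours (still $O_D(1)$) to break each forest into pieces that each cover only a constant fraction of the vertex set, with components of bounded size handled by factor-type counting. Each such piece then has $p_c=O(q(H))$ by a first-moment/Janson argument, exploiting the slack that leaves many vertices uncovered. Getting the constant fraction to remove the $\log$ factor, via a sparse-random-partition choice of colours, is the delicate step.
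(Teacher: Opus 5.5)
\textbf{There is a genuine gap: the number of pieces is not bounded.} Your degeneracy splitting produces $\d(H)$ forests, and then $k=2\d(H)$ or more pieces. In this theorem $\d(H)$ may be as large as $C\log n/\log\log n$, but the statement requires $k=k(C,\eps)$ independent of $n$. What you have sketched is really an (incomplete) argument for \Cref{thm:bounded degeneracy}. Tellingly, you never use the hypothesis that the maximum degree is at most $n^{1-\eps}$.

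With only a bounded number of pieces, each piece still has degeneracy about $\d(H)/k\to\infty$. So it cannot be cut into forests or star forests. Your intermediate target $p_c(H_i)\le Lq(H_i)$ is also the wrong one. It is false for general graphs, as your own perfect-matching example shows, and it discards the only available slack.

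Even in the bounded-degeneracy regime you leave the star-forest step open. The paper closes it in \Cref{lem:all trees}. It splits each forest into three star forests, each on at most $(1-\eps)n$ vertices. It then embeds low-degree stars greedily using the $\eps n$ spare vertices, and shows that high-degree stars occupy only $\delta n$ vertices.

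\textbf{The missing idea.} Compare each piece with $q(H)$, not with $q(H_i)$. Splitting into a constant number of pieces lowers the expectation threshold by a polynomial factor, and this factor absorbs the $\log^3 n$ loss in \Cref{thm:DKP}. The argument goes as follows.
\begin{enumerate}
\item Set $k'=\lceil 13\max(C,1)/\eps\rceil$. If $\d(H)\le 5k'$, apply \Cref{thm:bounded degeneracy}.
\item Otherwise use \Cref{claim: degen decomp} to split $H$ into $k'$ pieces with $\d(H_i)\le\lceil \d(H)/k'\rceil$.
\item The degree bound gives $\aut(I)\le n^{(1-\eps/2)v_I}$ for every subgraph $I$ without isolated vertices (\Cref{lem: max deg aut bound}).
\item Combining this with $e_I/v_I\le \d(H_i)$ gives $\tilde\pE(H_i)\le n^{-\eps v_I/(2e_I)}\le n^{-5\eps k'/(12\d(H))}$.
\item A densest subgraph $J$ of $H$ gives $\tilde\pE(H)\ge n^{-v_J/e_J}\ge n^{-2/\d(H)}$.
\item Since $\d(H)\le C\log n/\log\log n$, the ratio $n^{(5\eps k'/12-2)/\d(H)}$ is at least $(\log n)^{3.1}$.
\item Hence \Cref{thm:DKP} yields $p_c(H_i)=O(\tilde\pE(H_i)\log^3 n)=o(q(H))$, and \Cref{thm:k-thr} finishes the proof.
\end{enumerate}
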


\paragraph{\textbf{Paper organization and conventions}} We first prove \Cref{prop:equiv}, which establishes the equivalence of Conjectures \ref{conj:DCC} and \ref{conj:DCC_k}, together with the 
reduction theorem, \cref{thm:k-thr}, in \cref{sec:main proofs}. Then, we prepare for the proofs of Theorems \ref{thm:small graphs}--\ref{thm:general graph containment} with some graph-theoretic preliminaries in \cref{sec:preliminaries}. We prove Theorems \ref{thm:small graphs}, \ref{thm:spanning graphs}, \ref{thm:bounded degeneracy}, and \ref{thm:general graph containment} in Sections \ref{sec:small graphs}, \ref{sec:spanning graphs}, \ref{sec: bounded degeneracy}, and \ref{sec:general graph containment} respectively. We conclude with some discussion and open problems in \cref{sec:discussion}.

We always assume that $n$ is sufficiently large to support our asymptotic estimation. We do not make a serious attempt to optimize leading constants in our asymptotic results. As is standard, we omit floors and ceiling signs whenever they are not necessary for the proofs, and we assume that all large enough quantities are integers.

\section{Proofs of the reformulation and the reduction theorem}\label{sec:main proofs}

We first prove the equivalence of Conjectures \ref{conj:DCC} and \ref{conj:DCC_k}.

\begin{proof}[Proof of \Cref{prop:equiv}]
Suppose first that \Cref{conj:DCC} holds with constants $k$ and $L$. We show
that \Cref{conj:DCC_k} also holds with the same constants. Fix an increasing
family $\cI$ and take any $p<p_k(\cI)$. By the definition of $p_k$, there is a
decreasing family $\cD$ such that $\mu_p(\cD)\ge 1-\frac{1}{2k}$ and $\cD_{(k)}\cap \cI=\emptyset$.
Equivalently, $\cI\subseteq \cD^{(k)}$. By \Cref{conj:DCC}, $\cD^{(k)}$ is
$(p/L)$-small, and hence $\cI$ is also $(p/L)$-small. Thus $p/L\le q(\cI)$.
Since this holds for every $p<p_k(\cI)$, we have $p_k(\cI)\le Lq(\cI).$

Conversely, suppose \Cref{conj:DCC_k} holds with constants $k$ and $L$. We
prove \Cref{conj:DCC} with the same constants. Let $\cD$ be a decreasing
family with $\mu_p(\cD)\ge 1-\frac{1}{2k}$, and set $\cI:=\cD^{(k)}$. If $\cI=\emptyset$, there is nothing to prove. By
\Cref{conj:DCC_k}, we have $p_k(\cI)\le Lq(\cI)$, so to draw the conclusion that $\cI$ is $(p/L)$-small, it suffices to show that
$p\le p_k(\cI)$. If instead $p>p_k(\cI)$, then \eqref{eq:k-thr} holds for this $p$ (see the paragraph following \Cref{def:k-threshold}), so applying it to
$\cI$ and to the same family $\cD$ gives $\cD_{(k)}\cap \cD^{(k)}\ne\emptyset$,
which is impossible. Hence $p\le p_k(\cI)\le Lq(\cI)$, so $q(\cI)\ge p/L$.
This proves \Cref{conj:DCC}.
\end{proof}

In the proof of \cref{thm:k-thr} below, the fact that we consider graph properties will be crucial.\footnote{To be more precise, the facts that are crucially used for the proof are that the permutation group $\cS_n$ acts on the ground set $\binom{[n]}{2}$, that $\mu_p$ is invariant under this action, and that $\cI$ is invariant under relabeling.} 

\begin{proof}[Proof of \cref{thm:k-thr}]
Throughout the proof, we fix a positive integer $k$, and set
$X=\binom{[n]}{2}$. Let $\cI \sub 2^X$ be a given increasing graph property
and let $H \in \cI$. We will often use $\hat H$ to denote a relabeling of $H$ in
$2^X$.

Choose a $k$-decomposition $(H_1,\ldots,H_k)$ of $H$ attaining the minimum in the definition of $r_k(H)$. By adding isolated vertices where necessary, we regard each $H_i$ as a graph on the common vertex set $[n]$. This does not change $p_c(H_i)$, and we continue to denote the resulting graph by $H_i$. Set 
\[
    \beta=\max\{p_c(H_i):i\in[k]\}.
\]
It remains to prove that $p_k(\cI)\le \beta$.  
If $\beta=1$, this is immediate, 
so assume $\beta<1$. We will show that \eqref{eq:k-thr} holds for every
$p\in(\beta,1)$, which implies the desired inequality.

Fix such a $p$. The following lemma establishes \eqref{eq:k-thr} for the fixed $p$. Let $\cD$ be a decreasing family (note that we do not
require $\cD$ to be a graph property) such that $\mu_p(\cD)\ge 1-\frac{1}{2k}$.

\begin{lem}\label{lem:k-thr_key}
There exist $\hat H_1,\ldots,\hat H_k\in\cD$ such that
\[
\text{(i) $\hat H_i$ is a relabeling of $H_i$ for each $i\in[k]$; \quad and \quad
(ii) $\bigcup_{i \in [k]} E(\hat H_i)$ is the edge set of a relabeling of $H$.}
\]
\end{lem}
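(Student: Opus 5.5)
Take a uniformly random permutation $\pi$ of $[n]$ and set $\hat H_i:=\pi(H_i)$ for each $i\in[k]$, using the \emph{same} $\pi$ for every $i$. With this choice, (i) holds automatically. Property (ii) also holds automatically: $\bigcup_i E(\pi(H_i))=\pi\bigl(\bigcup_i E(H_i)\bigr)=\pi(E(H))$, which is the edge set of the relabeling $\pi(H)$. So the only thing to prove is that, with positive probability, $\pi(H_i)\in\cD$ for every $i$ simultaneously. By a union bound, it suffices to show that for each fixed $i$,
\[
\pr\bigl(\pi(H_i)\notin\cD\bigr)<\tfrac1k .
\]

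Fix $i$ and write $\mathcal U:=2^X\setminus\cD$. This family is increasing and satisfies $\mu_p(\mathcal U)\le \frac1{2k}$. Let $G\sim G_{n,p}$ be independent of $\pi$. Because $p>\beta\ge p_c(H_i)$ and $\mu_p(\cI_{H_i})$ is strictly increasing in $p$, the event $E=\{G\supseteq \pi(H_i)\}$ has
\[
\pr(E)=\mu_p(\cI_{H_i})>\tfrac12 .
\]
Here we use that $\pi(H_i)$ is a uniformly random labeled copy of $H_i$ independent of $G$. If $\pi(H_i)\in\mathcal U$ and $E$ occurs, then $G\in\mathcal U$, since $\mathcal U$ is increasing. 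The next step is to compute $\pr(\pi(H_i)\in\mathcal U,\ E)$ by conditioning on $G$ rather than on $\pi$, which is where the symmetry of graph properties enters. Conditional on $G$ and on $E$, the copy $\pi(H_i)$ is uniformly distributed among the labeled copies of $H_i$ contained in $G$. This is the point to handle carefully: the automorphism count makes each labeled copy arise from the same number $\aut(H_i)$ of permutations, so the conditional distribution is indeed uniform. Consequently
\[
\pr\bigl(\pi(H_i)\in \mathcal U \mid E\bigr)\le \pr(G\in\mathcal U\mid E)\le \frac{\mu_p(\mathcal U)}{\pr(E)}<\frac{1/(2k)}{1/2}=\frac1k .
\]

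The first inequality above is too crude as written, so I expect this step to be the main obstacle. We need to control $\pr(\pi(H_i)\in\mathcal U)$ itself, not only on the event $E$. The cleaner route is to reverse the coupling. For a fixed labeled copy $\hat H$, let $\nu(\hat H):=\pr(G\supseteq \hat H)=p^{e_{H_i}}$. This value is the same for all copies, so
\[
\pr\bigl(\pi(H_i)\in\mathcal U\bigr)\cdot p^{e_{H_i}}=\pr\bigl(\pi(H_i)\in\mathcal U,\ G\supseteq\pi(H_i)\bigr)\le \mu_p(\mathcal U),
\]
but this loses a factor $p^{e_{H_i}}$ and is useless. The right argument instead uses Harris/FKG-type reasoning or a direct double count. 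For each $G\in\mathcal U$, the number of copies of $H_i$ inside $G$ should be compared with the typical number. The plan is to argue as follows. The quantity $\pr(\pi(H_i)\in\mathcal U)$ equals the fraction $f$ of copies of $H_i$ that lie in $\mathcal U$. Independently of $G$, this fraction equals the probability that a uniformly random copy $\hat H$ lies in $\mathcal U$. Because $\mathcal U$ is increasing, $\hat H\in\mathcal U$ implies $\hat H\cup G\in\mathcal U$ for every $G$. One then needs a monotone coupling $\Phi$ sending $(G,\pi)$ to a $\mu_p$-distributed graph containing $\pi(H_i)$ with probability greater than $\frac12$. Under such a coupling, $\pr(\Phi\in\mathcal U)\ge f\cdot\bigl(\text{probability }>\tfrac12\bigr)$, and hence $f<\frac1k$.

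The natural coupling is to take $G\sim G_{n,p}$ and let $\pi$ be uniform among the permutations with $\pi(H_i)\subseteq G$ when such permutations exist. This is exactly the conditional distribution described above, and there $\pi(H_i)$ is uniform over all copies by the invariance of $\mu_p$ under $\cS_n$. Then
\[
\frac1{2k}\ge\mu_p(\mathcal U)\ge\pr(G\in\mathcal U,\ E)\ge\pr(\pi(H_i)\in\mathcal U,\ E)=\pr(E)\,f>\frac f2 .
\]
The equality uses that, conditional on $E$, the copy $\pi(H_i)$ is uniform and hence independent of the event $E$ as far as its law is concerned. This gives $f<\frac1k$ for each $i$, and the union bound over $i\in[k]$ completes the proof.
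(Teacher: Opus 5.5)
Your argument is correct, and its skeleton is exactly the paper's. One uniform permutation is applied to all the $H_i$, so (i) and (ii) hold automatically. A union bound then reduces everything to $1-\perm(H_i)<1/k$. The difference is in how you prove that estimate, which is the paper's Lemma~\ref{lem:perm_key}.

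The paper averages over relabelings of the \emph{host} graph. It shows $\int\perm(F)\,d\mu_p(F)=\mu_p(\cD)$ (Claim~\ref{cl:perm_int}). A Markov-type step then gives $\mu_p(\cG)\ge 1/2$ for $\cG=\{F:\perm(F)>1-1/k\}$. It picks one $G_i\in\cG\cap\cI_{H_i}$ and passes the bound down to $H_i$ via the monotonicity $\perm(H_i)\ge\perm(G_i)$.

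You instead average over copies of $H_i$ \emph{inside} the host graph. On $E=\{G\in\cI_{H_i}\}$ you plant a uniformly random copy $\hat H\subseteq G$. By $\cS_n$-equivariance its conditional law is uniform over all copies. Since $\mathcal U=2^X\setminus\cD$ is increasing, you get $\mu_p(\cI_{H_i})\,(1-\perm(H_i))=\pr(\hat H\in\mathcal U,\,E)\le\mu_p(\mathcal U)\le 1/(2k)$.

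What each route buys:
\begin{itemize}
\item Your double count skips the intermediate family $\cG$. It directly gives $1-\perm(H_i)\le(1-\mu_p(\cD))/\mu_p(\cI_{H_i})$, which the paper's route also yields if one averages over $\cI_{H_i}$ instead of thresholding.
\item The price is that you must analyze the law of a randomly selected copy. The paper needs only invariance of $\mu_p$ and a deterministic monotonicity step, packaged as a reusable identity.
\end{itemize}

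For a final write-up, make these repairs:
\begin{itemize}
\item Delete the false start. With $\pi$ independent of $G$, the event $\{G\supseteq\pi(H_i)\}$ has probability $p^{e_{H_i}}$, not $\mu_p(\cI_{H_i})$. So $E$ must be defined from the outset as ``$G$ contains some copy of $H_i$'', with $\hat H$ chosen depending on $G$. The appeal to FKG is then unnecessary.
\item Spell out the uniformity step. For $\sigma\in\cS_n$, the pair $(T_\sigma(G),T_\sigma(\hat H))$ has the same joint law as $(G,\hat H)$. This holds because $\mu_p$ is relabeling-invariant and $T_\sigma$ maps the copies inside $G$ bijectively onto those inside $T_\sigma(G)$. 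Since the copies of $H_i$ form a single $\cS_n$-orbit, the conditional law of $\hat H$ given $E$ is uniform.
\item If $H_i$ is edgeless, then $\pr(E)=1$ and the same chain applies.
\end{itemize}
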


\noindent To see that \eqref{eq:k-thr} follows from  \Cref{lem:k-thr_key} for the fixed $p$, suppose $(\hat H_1, \ldots, \hat H_k)$ satisfy the conclusion of \Cref{lem:k-thr_key}, and let $G^*$ be the graph on $[n]$ with edge set $E(G^*)=\bigcup_{i \in [k]} E(\hat H_i)$. Since each $\hat H_i$ belongs to $\cD$, we have $G^* \in \cD_{(k)}$. By part~\textit{(ii)}, $G^*$ is a relabeling of $H$. Since $H \in \cI$ and $\cI$ is invariant under relabeling, $G^* \in \cI$. Therefore, $\cD_{(k)} \cap \cI \ne \emptyset$, as required in \eqref{eq:k-thr}.

\begin{mydef}
For any graph $F\in 2^X$ and a permutation $\pi\in\cS_n$, define the graph
$T_\pi(F)\in 2^X$ by
\[
    E(T_\pi(F))=\{\{\pi(x),\pi(y)\}:\{x,y\}\in E(F)\}.
\]

The key quantity in the proof is the following. For each $F\in 2^X$, define
\[
    \perm(F):=\frac{|\{\pi\in\cS_n:T_\pi(F)\in\cD\}|}{|\cS_n|}.
\]
Thus, $\perm(F)$ is the probability for the event  $\{T_{\boldsymbol{\pi}}(F) \in \cD\}$ where $\boldsymbol{\pi}$ is chosen uniformly from $\cS_n$. Notice that it does not count copies of $F$ in the host graph; rather, it measures the proportion of relabelings of $F$ that belong to the family $\cD$. By symmetry,
\beq{eq:symm}
\text{$\perm(F')=\perm(F'')$ for any relabelings $F',F''$ of $F$.}
\enq
\end{mydef}

\Cref{lem:k-thr_key} will easily follow from the next lemma.

\begin{lem}\label{lem:perm_key}
For every $i\in[k]$, $\perm(H_i)>1-1/k$.
\end{lem}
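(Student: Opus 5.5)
We need $\perm(H_i) > 1-1/k$. The plan is to compute the expectation of $\perm$ of a random graph distributed as $G_{n,p}$, restricted to the event that it contains a copy of $H_i$, and then use the fact that $\perm$ is monotone along subgraphs. The key observation is that for a random graph $G\sim\mu_p$ and an independent uniform $\boldsymbol\pi\in\cS_n$, the relabeled graph $T_{\boldsymbol\pi}(G)$ is again distributed as $\mu_p$, because $\mu_p$ is invariant under the action of $\cS_n$ on $X$. Averaging over $G$ therefore gives
\[
\EE_{G\sim\mu_p}[\perm(G)] = \pr(T_{\boldsymbol\pi}(G)\in\cD)=\mu_p(\cD)\ge 1-\tfrac1{2k}.
\]

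Next, since $\cD$ is decreasing, $\perm$ is monotone: if $F\subseteq F'$ (as edge sets), then $T_\pi(F)\subseteq T_\pi(F')$, so $T_\pi(F')\in\cD$ implies $T_\pi(F)\in\cD$, and hence $\perm(F)\ge\perm(F')$. Combining this with \eqref{eq:symm}, whenever $G$ contains a copy $F'$ of $H_i$ we get $\perm(H_i)=\perm(F')\ge\perm(G)$.

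Now fix $i$ and let $\mathcal E=\cI_{H_i}$ be the event that $G$ contains a copy of $H_i$. Since $p>\beta\ge p_c(H_i)$ and $\mu_p(\cI_{H_i})$ is strictly increasing in $p$, we have $\mu_p(\mathcal E)>1/2$. (If $H_i$ is edgeless, $\perm(H_i)=1$ trivially, since $\emptyset\in\cD$ because $\cD$ is nonempty and decreasing.) Using $\perm\le 1$, we bound
\[
1-\tfrac1{2k}\le \EE[\perm(G)]\le \mu_p(\mathcal E)\,\perm(H_i)+\bigl(1-\mu_p(\mathcal E)\bigr).
\]
This rearranges to $\mu_p(\mathcal E)\bigl(1-\perm(H_i)\bigr)\le \tfrac1{2k}$, so $1-\perm(H_i)\le \tfrac{1}{2k\,\mu_p(\mathcal E)}<\tfrac1k$, which is exactly the claimed strict inequality.

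The only delicate points are checking the invariance of $\mu_p$ under relabeling, which holds because $T_\pi$ permutes $X$ bijectively, and using the strictness $\mu_p(\mathcal E)>1/2$ to obtain a strict bound. Neither should pose a real obstacle.
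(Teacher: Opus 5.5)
Your proof is correct and follows essentially the same route as the paper. Both use the averaging identity $\int \perm\,d\mu_p=\mu_p(\cD)$, the monotonicity of $\perm$ under taking subgraphs, and the fact that $\mu_p(\cI_{H_i})>1/2$. The only difference is in the last step: the paper first shows that the superlevel set $\{\perm>1-1/k\}$ has $\mu_p$-measure at least $1/2$ and then intersects it with $\cI_{H_i}$, whereas you split the expectation directly over $\cI_{H_i}$, which also gives the explicit bound $1-\perm(H_i)\le 1/(2k\,\mu_p(\cI_{H_i}))$.
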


\begin{proof}[Proof of \Cref{lem:k-thr_key} assuming \Cref{lem:perm_key}]
For each $\pi\in\cS_n$, the graphs $\{T_\pi(H_i):i\in[k]\}$ form a
$k$-decomposition of $T_\pi(H)$, which is a relabeling of $H$. Therefore,
\Cref{lem:k-thr_key} will follow if we show that
\beq{eq:pi_exist}
\text{there exists a $\pi_0\in\cS_n$ such that
$\{T_{\pi_0}(H_i):i\in[k]\}\sub \cD$.}
\enq

Let $\boldsymbol{\pi}$ be a uniformly random element of $\cS_n$ and let $Q_i$
be the event that $T_{\boldsymbol{\pi}}(H_i)\in \cD$. Then $\pr(Q_i)=\perm(H_i)$, so by \Cref{lem:perm_key}, we have $\pr(Q_i^c)<1/k$ for every $i\in[k]$.
Therefore,
\[
\pr\bigl(\bigwedge_{i\in[k]} Q_i\bigr)
=1-\pr\bigl(\bigvee_{i\in[k]} Q_i^c\bigr)
\ge 1-\sum_{i\in[k]}\pr(Q_i^c)>0.
\]
This yields \eqref{eq:pi_exist}.
\end{proof}

\begin{proof}[Proof of \Cref{lem:perm_key}]
We begin with the following observation.

\begin{obs}\label{obs:perm_subgr}
Let $G$ and $F$ be two graphs in $2^X$. If $F$ is a subgraph of $G$, then
$\perm(F)\ge \perm(G)$.
\end{obs}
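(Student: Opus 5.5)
The observation follows directly from the definitions, using only that $\cD$ is decreasing and that relabeling respects subgraphs. I will show that the set of permutations counted in $\perm(G)$ is contained in the set counted in $\perm(F)$. Since both are normalized by $|\cS_n|$, the inequality then follows at once.

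First, take the subgraph relation in the form that matters here. Both $F$ and $G$ lie in $2^X$, i.e.\ they are graphs on the vertex set $[n]$. Saying that $F$ is a subgraph of $G$ then means $E(F)\sub E(G)$. Adding or deleting isolated vertices does not change the element of $2^X$, so this is the only content of the relation.

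Next, check that $T_\pi$ is monotone. For any $\pi\in\cS_n$, the inclusion $E(F)\sub E(G)$ gives
\[E(T_\pi(F))=\{\{\pi(x),\pi(y)\}:\{x,y\}\in E(F)\}\sub\{\{\pi(x),\pi(y)\}:\{x,y\}\in E(G)\}=E(T_\pi(G)).\]
Hence $T_\pi(F)\sub T_\pi(G)$ as elements of $2^X$.

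Now use that $\cD$ is decreasing. If $T_\pi(G)\in\cD$, then the subset $T_\pi(F)$ of $T_\pi(G)$ also lies in $\cD$. This gives
\[\{\pi\in\cS_n:T_\pi(G)\in\cD\}\sub\{\pi\in\cS_n:T_\pi(F)\in\cD\},\]
and dividing the cardinalities by $|\cS_n|$ yields $\perm(G)\le\perm(F)$.

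The only step needing any care is the first one: "subgraph" has to be read as edge-set inclusion of graphs on $[n]$. If $F$ were instead only isomorphic to a subgraph of $G$, we would first replace $F$ by a relabeling $F'$ with $E(F')\sub E(G)$ and run the argument above for $F'$. By \eqref{eq:symm}, $\perm(F')=\perm(F)$, so the conclusion is unchanged.
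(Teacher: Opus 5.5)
Your proof is correct and takes the same approach as the paper. Both show $\{\pi\in\cS_n: T_\pi(G)\in\cD\}\subseteq\{\pi\in\cS_n: T_\pi(F)\in\cD\}$, using that $T_\pi$ preserves the subgraph relation and that $\cD$ is decreasing. Your closing remark about passing to a relabeling via \eqref{eq:symm} matches how the paper combines this observation with \eqref{eq:symm} in the proof of \Cref{lem:perm_key}.
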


\begin{subproof}
Let $P(G)=\{\pi\in\cS_n:T_\pi(G)\in\cD\}$
and define $P(F)$ similarly. We claim that $P(G)\sub P(F)$. To see this, note that for any $\pi\in\cS_n$, if $F$ is a
subgraph of $G$, then $T_\pi(F)$ is a subgraph of $T_\pi(G)$. Since $\cD$ is
decreasing, if $T_\pi(G)\in\cD$, then $T_\pi(F)\in\cD$. This shows that
$P(G)\sub P(F)$, and the observation follows.
\end{subproof}

Next, we use an averaging argument. The integral below means expectation
with respect to $F\sim \mu_p$; explicitly, for any function $f:2^X\to\mathbb R$,
\[
    \int f(F)\,d\mu_p(F)
    =
    \sum_{F\in 2^X} f(F)\mu_p(\{F\}).
\]

\begin{claim}\label{cl:perm_int} For any $F \in 2^X$,
    \[\int\perm(F)d\mu_p(F)=\mu_p(\cD).\]
\end{claim}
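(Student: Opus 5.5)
The plan is to write $\int \perm(F)\,d\mu_p(F)$ as a double average, over $F\sim\mu_p$ and over a uniformly random $\boldsymbol{\pi}\in\cS_n$ independent of $F$, and then swap the order of averaging. Unwinding the definition of $\perm$ gives
\[
\int \perm(F)\,d\mu_p(F)=\sum_{F\in 2^X}\mu_p(\{F\})\,\frac{1}{|\cS_n|}\sum_{\pi\in\cS_n}\mathbf 1\{T_\pi(F)\in\cD\}
=\frac{1}{|\cS_n|}\sum_{\pi\in\cS_n}\sum_{F\in 2^X}\mu_p(\{F\})\,\mathbf 1\{T_\pi(F)\in\cD\}.
\]
This exchange is trivially valid because both sums are finite.

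It then suffices to show that for each fixed $\pi$ the inner sum equals $\mu_p(\cD)$. For this I use two facts.

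\begin{itemize}
\item The map $F\mapsto T_\pi(F)$ is a bijection of $2^X$: its inverse is $T_{\pi^{-1}}$, since $\pi$ induces a permutation of the ground set $X=\binom{[n]}2$.
\item It preserves edge counts, $|E(T_\pi(F))|=|E(F)|$, and hence $\mu_p(\{T_\pi(F)\})=p^{|E(F)|}(1-p)^{|X|-|E(F)|}=\mu_p(\{F\})$.
\end{itemize}

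Substituting $G=T_\pi(F)$ turns the inner sum into $\sum_{G\in 2^X}\mu_p(\{G\})\mathbf 1\{G\in\cD\}=\mu_p(\cD)$. Averaging this constant over $\pi$ gives the claim.

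There is no real obstacle here. The only point needing care is the invariance of $\mu_p$ under the induced action, which is exactly the symmetry singled out in the footnote before the proof of \cref{thm:k-thr}. Note also that the claim uses nothing about $\cD$ being decreasing; that hypothesis enters only through \Cref{obs:perm_subgr} when proving \Cref{lem:perm_key}.
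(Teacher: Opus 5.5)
Your proposal is correct and follows the paper's proof: you expand $\perm$ as an average over $\pi\in\cS_n$, interchange the finite sums, and use that $F\mapsto T_\pi(F)$ is a $\mu_p$-preserving bijection of $2^X$ to identify each inner sum with $\mu_p(\cD)$. Your remark that only the invariance of $\mu_p$ is used, and no property of $\cD$, matches the paper's own parenthetical note.
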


\begin{subproof} Let $\mathbf 1_{\cD}$ denote the indicator function of $\cD$; that is, $\mathbf 1_{\cD}(G)=1$ if $G \in \cD$, and $\mathbf 1_{\cD}=0$ otherwise. Using the definition $\perm(F)=\frac{1}{n!}\sum_{\pi\in\cS_n}{\bf 1}_{\cD}(T_\pi(F))$, we have
\[
\begin{aligned}
    \int \perm(F)\,d\mu_p(F)
    &= \frac{1}{n!}\sum_{\pi\in\cS_n}
       \int {\bf 1}_{\cD}(T_\pi(F))\,d\mu_p(F).
\end{aligned}
\]
For each fixed $\pi\in\cS_n$, the map $F\mapsto T_\pi(F)$ is a bijection of
$2^X$. Moreover, $\mu_p\bigl({T_\pi(F)}\bigr)=\mu_p({F})$ for every $F \in 2^X$, since $T_\pi(F)$ has the same number of edges as $F$. 
Hence, by the change of variables $F'=T_\pi(F)$,
\[
\int {\bf 1}_{\cD}(T_\pi(F))\,d\mu_p(F)
=\int {\bf 1}_\cD(F')\,d\mu_p(F')=\sum_{F'\in 2^X}{\bf 1}_{\cD}(F')\mu_p(\{F'\})
=\mu_p(\cD).
\]
(Note that the above uses only the invariance of $\mu_p$ under relabeling, without requiring any symmetry of $\cD$.) Therefore,
\[\int \perm(F) d\mu_p(F)=\frac{1}{n!}\sum_{\pi \in \cS_n} \mu_p(\cD)=\mu_p(\cD).\qedhere\]
\end{subproof}

Let $\cG:=\{F\in 2^X:\perm(F)>1-1/k\}$.

\begin{claim}\label{cl:cG_half}
    $\mu_p(\cG)\ge \frac12$.
\end{claim}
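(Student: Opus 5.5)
This is a Markov-type averaging argument applied to the nonnegative function $1-\perm(F)$, using \Cref{cl:perm_int} together with the hypothesis $\mu_p(\cD)\ge 1-\frac{1}{2k}$.

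\Cref{cl:perm_int} gives
\[
\int \bigl(1-\perm(F)\bigr)\,d\mu_p(F) = 1-\mu_p(\cD) \le \frac{1}{2k}.
\]
The function $1-\perm(F)$ takes values in $[0,1]$. On the complement $\cG^c=\{F:\perm(F)\le 1-1/k\}$ it is at least $1/k$. Markov's inequality therefore yields
\[
\frac{1}{k}\,\mu_p(\cG^c)\le \int_{\cG^c}\bigl(1-\perm(F)\bigr)\,d\mu_p(F)\le \int \bigl(1-\perm(F)\bigr)\,d\mu_p(F)\le \frac{1}{2k}.
\]
Hence $\mu_p(\cG^c)\le 1/2$, that is, $\mu_p(\cG)\ge 1/2$.

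The only point needing care is the direction of the strict inequality in the definition of $\cG$. Points with $\perm(F)$ exactly equal to $1-1/k$ lie in $\cG^c$, and there $1-\perm(F)\ge 1/k$ still holds, so the Markov bound applies to all of $\cG^c$. No step here is a real obstacle; the claim is immediate once the integral identity of \Cref{cl:perm_int} is available.

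Looking ahead, this claim will presumably be combined with $p>p_c(H_i)$, which gives $\mu_p(\cI_{H_i})>1/2$. Since the two sets have measures summing to more than $1$, some $F\in\cG$ contains a copy of $H_i$. Then \Cref{obs:perm_subgr} and \eqref{eq:symm} give $\perm(H_i)\ge\perm(F)>1-1/k$.
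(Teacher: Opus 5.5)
Your proof is correct and is essentially the paper's argument. The paper argues by contradiction, bounding $\int \perm(F)\,d\mu_p(F)\le \mu_p(\cG)+(1-1/k)(1-\mu_p(\cG))$; this is your Markov bound on $1-\perm$ written in terms of $\perm$, and stating it directly as you do works equally well.
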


\begin{subproof}
By the assumption on $\cD$ and \Cref{cl:perm_int},
\[
    \int \perm(F)\,d\mu_p(F)=\mu_p(\cD)\ge 1-\frac{1}{2k}.
\]

For the sake of contradiction, suppose $\mu_p(\cG)<1/2$. Since $0\le \perm(F)\le 1$ for every
$F \in 2^X$ and $\perm(F)\le 1-1/k$ for $F \in \cG^c$, we have
\[
\begin{aligned}
    \int \perm(F)\,d\mu_p(F)
    &= \int_{\cG}\perm(F)\,d\mu_p(F)
       +\int_{\cG^c}\perm(F)\,d\mu_p(F) \\
    &\le \mu_p(\cG)+(1-1/k)(1-\mu_p(\cG))  \\
    &= 1-\frac{1}{k}\cdot(1-\mu_p(\cG))
     < 1-\frac{1}{2k},
\end{aligned}
\]
a contradiction. Hence $\mu_p(\cG)\ge 1/2$. \end{subproof}

Now we derive the conclusion of \Cref{lem:perm_key} that
\[ \text{for every $i \in [k]$, $\perm(H_i) > 1-1/k$.}\]
Fix $i\in[k]$.  
Since $p>\beta\ge p_c(H_i)$, we have $\mu_p(\cI_{H_i})>1/2$ if $H_i$ has at least one edge, while $\mu_p(\cI_{H_i})=1$ if $H_i$ is edgeless. In either case, $\mu_p(\cI_{H_i})>1/2.$ 
 Together with  \Cref{cl:cG_half}, this implies that 
$\cG\cap \cI_{H_i}\ne\emptyset$. Choose $G_i \in \cG \cap \cI_{H_i}$. Since $G_i\in \cG$, we have
$\perm(G_i)>1-1/k$. Since $G_i\in \cI_{H_i}$, it contains a relabeling $H_i'$ of $H_i$. Now by \Cref{obs:perm_subgr}, $\perm(H_i')\ge \perm(G_i)>1-1/k$, and finally, \eqref{eq:symm} gives $ \perm(H_i)=\perm(H_i')>1-1/k.$ 
This proves \Cref{lem:perm_key}.
\end{proof}

Since $p>\beta$ was arbitrary, it follows that $p_k(\cI) \le \beta=r_k(H)$, as desired.
\end{proof}

\section{Graph-theoretic preliminaries}\label{sec:preliminaries}

In this section, we collect some preliminary results.

\subsection{Graphic expectation thresholds and related estimates}\label{subsec:exp_thr}

To prove the results in \Cref{sec:main results2}, we need a way to relate ordinary thresholds $p_c$ to expectation thresholds $q$. Since $q(H)$ is defined through an optimization over all covers of $\cI_H$, it is generally difficult to compute directly. We therefore recall the  graphic expectation threshold\footnote{ Strictly speaking, \cite{kahn2007thresholds} defines $\pE$ with 1 in place of 1/2 and calls it the ``expectation threshold.''} from \cite{kahn2007thresholds}, which provides a useful lower bound on $q(H)$.\footnote{Much of the literature uses the ``fractional expectation threshold'' $q_f$ instead of $q$. This more tractable parameter satisfies $q(H) \le q_f(H)$, but this inequality goes in the opposite direction from what we need, since our applications require a lower bound on $q(H)$.}

\begin{mydef}
For a graph $J$, let $X_J$ denote the number of copies of $J$ in $G_{n,p}$, and write $\EE_pX_J$ for its expectation. 
The \emph{graphic expectation threshold} of a graph $H$ is given by
\beq{def:pE} \pE(H) = \min\{p \in [0,1]: \EE_p X_I \geq 1/2\ \text{ for every subgraph $I$ of $H$}
\}.\enq
\end{mydef}
\nin Note that \[\pE(H) \leq q(H) \text{ 
for any graph $H$;}\] indeed, let $q:=\pE(H)$ and choose a subgraph $I$ of $H$ such that $\mathbb E_qX_I=1/2$. 
Then the family $\cG_I \sub 2^{\binom{[n]} 2}$ that consists of all subgraphs of $K_n$ that are isomorphic to $I$ covers $\cI_H$, and $\sum_{S \in \cG_I}\mu_p(\langle S \rangle)=\mathbb E_qX_I=1/2$. Therefore, $\cI_H$ is $q$-small.

The following theorem of Dubroff, Kahn, and the third author bounds $p_c(H)$ from above in terms of $\pE(H)$.

\begin{thm}[{\cite[Theorem 1.2]{dubroff2025second}}]\label{thm:DKP}
There is a universal constant $K$ such that for any graph $H$ with at most $n$ vertices, $p_c(H) \leq K\pE(H)\log^3n$.
\end{thm}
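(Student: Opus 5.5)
Since \Cref{thm:DKP} is quoted from \cite{dubroff2025second}, we do not reprove it here; instead we indicate the route we would take. It goes through the Park--Pham theorem \cite{park2024proof} in its ``spread'' form: if an increasing family $\cI$ supports a $q'$-spread probability measure on its minimal elements, then $p_c(\cI)=O(q'\log \ell(\cI))$. For $\cI=\cI_H$ we have $\ell(\cI_H)\le e_H\le n^2$, so $\log\ell=O(\log n)$. It therefore suffices to exhibit a $q'$-spread measure on copies of $H$ in $K_n$ with
\[
q'=O\bigl(\pE(H)\log^2 n\bigr).
\]

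\textbf{The measure and the spread condition.} The natural candidate is the uniformly random copy $\mathbf H$ of $H$ in $K_n$. Set $q:=\pE(H)$. Fix an edge set $S$ with $e_S\ge1$, and let $I$ be the graph it spans, with isolated vertices ignored. Then
\[
\pr(S\subseteq E(\mathbf H))=\frac{N_H(I)}{N_{K_n}(I)},
\]
where $N_G(I)$ denotes the number of copies of $I$ in $G$. The definition \eqref{def:pE} of $\pE$, applied to subgraphs $I$ of $H$, gives
\[
N_{K_n}(I)\,q^{e_I}=\EE_q X_I\ge \tfrac12 .
\]
Hence $\pr(S\subseteq E(\mathbf H))\le 2\,N_H(I)\,q^{e_I}$. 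The spread condition $\pr(S\subseteq E(\mathbf H))\le (q')^{e_I}$ would therefore follow from a bound of the form
\[
N_H(I)\le \tfrac12\,(C\log^2 n)^{e_I}.
\]

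\textbf{Where this breaks, and the fix we would try.} The bound on $N_H(I)$ is false in general. For example, if $I$ has $c(I)$ components and $H$ has many vertices, then $N_H(I)$ can be as large as $n^{c(I)}$. The key step, and the main obstacle, is to exploit the fact that \eqref{def:pE} is a condition on \emph{all} subgraphs of $H$, not only on $I$. First, we would try to reduce to connected $I$, since counts factor approximately over components. Second, for a dense piece of $H$ we would compare $I$ with a suitable larger subgraph $I'\supseteq I$ of $H$: the hypothesis $\EE_qX_{I'}\ge 1/2$ gives extra room, since roughly $n^{v_{I'}-v_I}q^{e_{I'}-e_I}\gtrsim 1$. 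This room can then pay for the multiplicity $N_H(I)$ via a union or entropy bound over the ways of extending $I$ inside $H$.

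If this uniform measure cannot be made spread directly, the fallback is to apply the Park--Pham machinery iteratively. One would run $O(\log n)$ rounds of the fragment/cover argument, and in each round replace spreadness by a weaker \emph{graphic} condition that follows from \eqref{def:pE} up to a $\log n$ loss. This bookkeeping accounts for the remaining logarithmic factors in $\log^3 n$. We expect this combinatorial counting step, controlling copies of $I$ inside $H$ by expected counts of subgraphs of $H$, to carry essentially all of the difficulty. The remainder is a direct application of \cite{park2024proof}.
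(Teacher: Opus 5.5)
The paper gives no proof of this statement. It imports it verbatim from \cite[Theorem 1.2]{dubroff2025second} and uses it as a black box, so within the paper the citation is the entire justification. Judged as a proof, however, your proposal has a genuine gap. The opening reduction is sound: by the spread form of the Park--Pham theorem, with $\ell(\cI_H)=e_H\le n^2$, it suffices to exhibit a $q'$-spread measure on copies of $H$ with $q'=O(\pE(H)\log^2 n)$. But this moves the whole content of the theorem into that one step. By the standard LP duality between spread measures and fractional covers, that step is, up to constants, the assertion that the fractional expectation threshold of $\cI_H$ is $O(\pE(H)\log^2 n)$. Your final paragraph explicitly leaves it open.

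Concretely, your own formula shows that the uniform copy $\mathbf H$ is $q'$-spread if and only if $N_H(I)\le N_{K_n}(I)(q')^{e_I}=\EE_{q'}X_I$ for every subgraph $I$ of $H$ with $e_I\ge 1$. With $q=\pE(H)$ and $q'=Cq\log^2 n$, this reads $N_H(I)\le (C\log^2 n)^{e_I}\,\EE_qX_I$. Your sufficient condition $N_H(I)\le \frac12(C\log^2 n)^{e_I}$ comes from replacing $\EE_qX_I$ by its worst-case lower bound $1/2$. It is this substitution, not necessarily the measure, that fails. For a perfect matching and $I$ a single edge, $N_H(I)=n/2$ while $\EE_qX_I=\binom n2 q=\Theta(n)$, and the uniform perfect matching is in fact $O(\pE)$-spread.

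What is missing is a lemma deriving $\EE_qX_I\ge N_H(I)/(C\log^2 n)^{e_I}$ from the hypotheses $\EE_qX_J\ge 1/2$ for \emph{other} subgraphs $J$ of $H$. Your suggested mechanism does not supply it, for two reasons.
\begin{itemize}
\item The ``room'' $n^{v_{I'}-v_I}q^{e_{I'}-e_I}\gtrsim 1$ does not follow from $\EE_qX_{I'}\ge 1/2$. The definition of $\pE$ yields only separate lower bounds, one per subgraph, and comparing $I'$ with $I$ would require an upper bound on $\EE_qX_I$.
\item The heuristic drops the factors $\aut(J)$, which is where the compensation actually comes from. For example, suppose $H$ contains $m$ vertex-disjoint copies of $I$ and $J$ is their union. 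Then $\frac12\le\EE_qX_J\le (e^{v_I}\EE_qX_I)^m/m!$, so $\EE_qX_I\ge e^{-v_I}m/(2e)$. This pays for $N_H(I)$ only when the copies of $I$ are spread out. When they overlap heavily, as for the $\binom{\Delta}{r}$ copies of $K_{1,r}$ inside a single $K_{1,\Delta}$, a different $J$ is needed; there it is the whole star with its $\Delta!$ automorphisms.
\end{itemize}
Handling arbitrary overlap patterns with only a polylogarithmic loss is the real difficulty. Neither the supergraph heuristic nor the unspecified iterative ``fallback'' addresses it. As it stands, the proposal is a plausible architecture with its central estimate missing.
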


We note that directly from the definition of $\pE(H)$, for every non-edgeless $H$ we have
\beq{eq:pE-alternate} \pE(H)=\max\left\{\left(\frac{\aut(I)}{2(n)_{v_I}}\right)^{1/e_I}: \text{$I$ is a subgraph of $H$ with $e_I \ge 1$}\right\}, \enq
where $\aut(I)$ is the number of automorphisms of $I$ 
and $(n)_a = n(n-1)\cdots(n-a+1)$. Indeed, we have $\mathbb E_pX_I=(n)_{v_I}p^{e_I}/\aut(I)$, so the condition $\mathbb E_pX_I \ge 1/2$ is equivalent to $p \ge \left(\aut(I)/(2(n)_{v_I})\right)^{1/e_I}$.

For later estimates, it will be convenient to replace the falling factorial $n_{v_I}$ by the simpler quantity $n^{v_I}$. Define
\[\tilde \pE(H):=\max\left\{\left(\frac{\aut(I)}{n^{v_I}}\right)^{1/e_I}:\text{$I$ is a subgraph of $H$ with $e_I \ge 1$}\right\}.\] 
Deleting isolated vertices does not decrease the expression defining $\pE$ and $\tilde \pE$, so the maxima can be taken over subgraphs with no isolated vertices. For every such subgraph $I$, we have $v_I \le 2e_I$.
Since $(n/e)^{v_I}\le (n)_{v_I}\le n^{v_I}$ and $v_I/e_I \le 2$, we obtain
\[\frac12\tilde \pE(H) \le \pE(H) \le e^2\tilde \pE(H).\] Therefore, $\pE(H)$ and $\tilde \pE(H)$ differ by at most an absolute constant factor.

\subsection{Thresholds of bounded-degree trees}

For our bound on the $2$-threshold of bounded-degree spanning trees, we use the following result of Alon, Krivelevich, and Sudakov~\cite{AlKrSu}, which concerns thresholds for nearly-spanning trees. This result was later improved by Balogh, Csaba, Pei, and Samotij~\cite{BCPS2010}.

\begin{thm}[{\cite[Theorem 1.1]{AlKrSu}}]\label{thm:embedding_trees}
    For every $d \geq 2$ and $0 < \epsilon < 1/2$, there exists a constant $c = c(d,\epsilon)$ such that, with high probability,  $G_{n,c/n}$ contains a copy of every tree on at most $(1-\eps)n$ vertices with maximum degree at most $d$.
\end{thm}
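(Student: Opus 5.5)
The plan is the expansion-based embedding strategy of Alon, Krivelevich, and Sudakov. We first show that $G=G_{n,c/n}$ contains, with high probability, a large subgraph with strong expansion. We then embed most of the tree greedily via the Friedman--Pippenger/Haxell tree-embedding lemma, and finish the remaining ``flexible'' parts of the tree using sprinkled randomness.

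\textbf{Step 1: random graph properties.} I would split the edges of $G_{n,c/n}$ into a few independent rounds, $G=G^{(0)}\cup G^{(1)}\cup G^{(2)}$, each distributed as $G_{n,c'/n}$. For $G^{(0)}$, first-moment calculations give a largest-subgraph statement: after deleting a small set (of size at most $\eps n/10$) of low-degree or badly connected vertices, the remaining graph $\Gamma$ satisfies two properties.
\begin{itemize}
\item[(a)] Every set $S$ with $|S|\le \delta n$ has $|N_\Gamma(S)\setminus S|\ge (2d+2)|S|$.
\item[(b)] Any two disjoint sets of size $\delta n$ span an edge.
\end{itemize}
Here $\delta$ depends on $d$ and $\eps$. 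These are the usual estimates for $G_{n,c/n}$ with $c$ large.

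\textbf{Step 2: structural split of the tree.} A tree $T$ on $m\le(1-\eps)n$ vertices with maximum degree at most $d$ either has at least $\gamma m$ leaves, or it contains at least $\gamma m/\ell$ vertex-disjoint bare paths of length $\ell$, where $\ell$ and $\gamma$ depend on $d$ and $\eps$. This follows from a counting argument relating leaves, vertices of degree $\ge 3$, and degree-$2$ vertices. In both cases we remove the flexible part to get a subtree $T'$: in the first case a set of leaves, in the second the interiors of the bare paths.

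\textbf{Step 3: embedding.} I would embed $T'$ into $\Gamma$ vertex by vertex, using the Haxell-type extendability lemma. Property (a) keeps the partial embedding ``good'', so that one can always extend by a new vertex while preserving expansion of the unused part. Condition (b) handles the regime where the used set is a linear fraction of $n$. The remaining $\ge\eps n/2$ unused vertices then serve for the completion.
\begin{itemize}
\item In the leaf case, we need a system of disjoint neighbours: each embedded parent must receive its required number of leaves among unused vertices. I would use the fresh edges of $G^{(1)}$ and verify Hall's condition, which holds with high probability for a bipartite random graph between a set of parents with bounded demands and a linear-size unused set.
\item In the bare-path case, we must connect prescribed pairs of endpoints by vertex-disjoint paths of exactly length $\ell$ through unused vertices. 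I would use $G^{(2)}$ with a P\'osa-rotation or absorbing-style argument, connecting pairs one at a time through expanding neighbourhoods of size $\Theta(\log n)$ radius. Since $\ell$ is a constant, I would instead partition the unused vertices into $\ell-1$ random layers and find a perfect system of layered paths via repeated Hall-type matchings.
\end{itemize}

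The main obstacle is the finishing step in the bare-path case: constructing the paths of exact length $\ell$ disjointly, simultaneously for linearly many pairs. This is also where the requirement $m\le(1-\eps)n$ is genuinely used, since it keeps a linear reservoir of unused vertices.
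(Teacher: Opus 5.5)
The paper does not prove this statement; it quotes it from \cite{AlKrSu} and uses it as a black box. Your proposal therefore has to stand on its own, and as written it has a genuine gap.

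\textbf{The completion steps fail at density $\Theta(1/n)$.} In the leaf case, the parents form a set $P$ of size at least $\gamma m/d$, and their images are fixed before $G^{(1)}$ is exposed. Each $u\in P$ has no $G^{(1)}$-neighbour in the reservoir with probability at least $(1-c'/n)^n\approx e^{-c'}$, independently over $u$. So with high probability linearly many parents are isolated in your bipartite graph, and Hall's condition fails; your claim that it holds with high probability is false in this regime.

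The bare-path case is worse. For a prescribed pair $x,y$ of already-embedded endpoints, the expected number of $x$--$y$ paths of length $\ell$ in $G_{n,c'/n}$ is at most $n^{\ell-1}(c'/n)^{\ell}=(c')^{\ell}/n$. Hence, for constant $\ell$, only $O(1)$ of the linearly many pairs can be joined at all, whether by layered matchings or otherwise. The radius-$\Theta(\log n)$ connections you mention produce paths of the wrong length.

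Moreover, the theorem is a universality statement. Randomness exposed after a tree-dependent embedding would require a union bound over exponentially many trees, and your failure probabilities are nowhere near small enough for that.

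\textbf{The real content sits in the one sentence of Step 3 you did not argue.} $T'$ still has about $(1-\epsilon)n-\gamma m$ vertices, so removing leaves or path interiors buys nothing. Any argument that embeds $T'$ while leaving $\epsilon n$ vertices unused also embeds $T$ itself. The leaf/bare-path dichotomy is a device for upgrading nearly-spanning embeddings to spanning ones in denser random graphs, where fresh edges give every vertex many neighbours; it is unnecessary here.

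The missing idea is how (b) lets the Friedman--Pippenger/Haxell extension run past the range of (a). Maintain a goodness condition of the form $|N(X)\setminus f(T')|\ge\sum_{x\in X}\bigl(d-\deg_{T'}(f^{-1}(x))\bigr)$, with degree $0$ for $x\notin f(T')$, for all $|X|\le\epsilon n/(2d+2)$. Then observe that no $X$ with $\delta n\le|X|\le\epsilon n/(2d+2)$ can come close to violating it. Otherwise $X$ and $V(\Gamma)\setminus(f(T')\cup X\cup N(X))$ would be disjoint sets with no edge between them. Using $|V(\Gamma)|\ge(1-\epsilon/10)n$ and $|f(T')|\le(1-\epsilon)n$, the latter set has size $\Omega(\epsilon n)$, contradicting (b).

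Consequently every critical set has fewer than $\delta n$ vertices. The small-set expansion (a) then drives the usual critical-set argument, and the embedding extends vertex by vertex up to $(1-\epsilon)n$ vertices. This gives a deterministic criterion: (a) and (b) imply that $\Gamma$ contains every tree on at most $(1-\epsilon)n$ vertices with maximum degree at most $d$. That is the kind of expansion-based sufficient condition Alon, Krivelevich, and Sudakov prove, later sharpened in \cite{BCPS2010}. Since (a) and (b) hold with high probability, universality comes for free and no sprinkling is needed.
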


\subsection{The Janson inequalities}
Our proof of the bound on the $2$-threshold of $F$-factors in \cref{thm:spanning graphs} uses the following forms of the Janson inequalities; see, for example, \cite[Theorems 8.1.1 and 8.1.2]{AlSp}.
\begin{thm}[Janson Inequality]\label{thm:Janson}
Let $\Omega$ be a finite ground set, and let $S$ be a random subset of $\Omega$ obtained by including 
each $x\in \Omega$ independently with probability $p_x$. Let $(A_i)_{i\in I}$ be subsets of $\Omega$, let $X_i$ be the indicator of the event $A_i\subseteq S$, and set $X := \sum_{i\in I} X_i$. For distinct $i, j\in I$, write $i\sim j$ if $A_i\cap A_j \neq \emptyset$. Define $\mu = \EE[X]$ and $\Delta = \sum_{(i,j):\,i\sim j} \EE[X_iX_j]$, where the sum is taken over ordered pairs. Then, $\pr(X=0) \leq e^{-\mu + \Delta/2}$.
\end{thm}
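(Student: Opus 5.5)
The plan is the standard proof by sequential conditioning, with Harris's (FKG) inequality for product measures as the only probabilistic tool. Write $B_i$ for the event $A_i\subseteq S$, fix an ordering $I=\{1,\ldots,m\}$, and factor
\[
\pr(X=0)=\prod_{i=1}^m \pr\Bigl(\overline{B_i}\,\Bigm|\,\bigwedge_{j<i}\overline{B_j}\Bigr).
\]
The factorization is legitimate because every conditioning event $\bigwedge_{j<i}\overline{B_j}$ is decreasing and contains $S=\emptyset$, hence has positive probability. (If some $p_x=0$, the events with $x\in A_i$ have probability zero and can be discarded first.) The goal is the lower bound
\[
\pr\Bigl(B_i\,\Bigm|\,\bigwedge_{j<i}\overline{B_j}\Bigr)\ \ge\ \pr(B_i)-\sum_{j<i,\ j\sim i}\pr(B_i\wedge B_j).
\]
Combined with $1-x\le e^{-x}$, this gives
\[
\pr(X=0)\le \exp\Bigl(-\sum_i\pr(B_i)+\sum_i\sum_{j<i,\,j\sim i}\pr(B_iB_j)\Bigr)=e^{-\mu+\Delta/2},
\]
since $\Delta$ sums over ordered pairs and each unordered pair is counted exactly once with $j<i$.

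To get the lower bound for fixed $i$, split $\{j<i\}$ into $D_1=\{j<i: j\sim i\}$ and $D_0=\{j<i: j\not\sim i\}$. Write $N_0=\bigwedge_{j\in D_0}\overline{B_j}$ and $N_1=\bigwedge_{j\in D_1}\overline{B_j}$. Then
\[
\pr(B_i\mid N_0N_1)\ \ge\ \pr(B_iN_1\mid N_0)=\pr(B_i\mid N_0)\,\pr(N_1\mid B_iN_0),
\]
where the inequality uses $\pr(N_0N_1)\le\pr(N_0)$. The first factor equals $\pr(B_i)$ because $N_0$ depends only on coordinates outside $A_i$, by the definition of $D_0$.

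For the second factor, a union bound gives $\pr(N_1\mid B_iN_0)\ge 1-\sum_{j\in D_1}\pr(B_j\mid B_iN_0)$. It then suffices to show $\pr(B_j\mid B_iN_0)\le\pr(B_j\mid B_i)$. Conditioning on $B_i$ just fixes the coordinates in $A_i$ to be present, and leaves a product measure on $\Omega\setminus A_i$. Under this measure $B_j$ is increasing and $N_0$ is decreasing, so Harris's inequality gives $\pr(B_jN_0\mid B_i)\le \pr(B_j\mid B_i)\pr(N_0\mid B_i)$, which is the required bound. Multiplying back by $\pr(B_i)$ turns $\pr(B_i)\pr(B_j\mid B_i)$ into $\pr(B_iB_j)$, and the lower bound follows.

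The one delicate point is that Harris's inequality is applied to the conditional measure given $B_i$, and one has to check that this measure is still a product measure. It is, since $B_i$ is an up-set generated by a single set. The rest is bookkeeping.
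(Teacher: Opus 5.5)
Your proof is correct and is essentially the standard Alon--Spencer argument that the paper cites for this statement without reproving it. It uses the same ingredients: the telescoping product, the inequality $\pr(A\mid BC)\ge\pr(AB\mid C)$, exact independence of $B_i$ from the non-neighbour events, and Harris's inequality applied to the conditional product measure given $B_i$. The only slip is your justification that the conditioning events have positive probability, which fails via $S=\emptyset$ if some $p_x=1$ or some $A_j=\emptyset$. The fix is immediate: if $\pr(X=0)=0$ there is nothing to prove, and otherwise every conditioning event contains $\{X=0\}$ and so has positive probability.
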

\begin{thm}[Extended Janson Inequality]\label{thm:extJanson}
Under the same assumptions  
as \cref{thm:Janson}, if $\Delta\geq \mu$, then we have $\pr(X=0) \leq e^{-\mu^2/(2\Delta)}$.
\end{thm}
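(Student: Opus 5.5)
The plan is to deduce the extended inequality from the basic Janson Inequality (\cref{thm:Janson}) by applying the basic inequality to a randomly thinned subfamily of the events. This is the standard Alon--Spencer argument.

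\textbf{Step 1 (restriction to a subfamily).} Fix any subset $J\subseteq I$ and set $X_J:=\sum_{i\in J}X_i$. The family $(A_i)_{i\in J}$ satisfies the hypotheses of \cref{thm:Janson}, with the same notion of $i\sim j$. Define
\[\mu_J=\sum_{i\in J}\EE[X_i], \qquad \Delta_J=\sum_{(i,j)\in J^2:\, i\sim j}\EE[X_iX_j].\]
Since all $X_i$ are nonnegative, $X=0$ implies $X_J=0$. Applying \cref{thm:Janson} to this family therefore gives
\[\pr(X=0)\le \pr(X_J=0)\le e^{-\mu_J+\Delta_J/2}\]
for every $J\subseteq I$. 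It remains to find a $J$ with $\mu_J-\Delta_J/2\ge \mu^2/(2\Delta)$.

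\textbf{Step 2 (random choice of $J$).} Fix $q\in[0,1]$ and let $\mathbf J$ contain each $i\in I$ independently with probability $q$.
\begin{itemize}
\item Each index lies in $\mathbf J$ with probability $q$, so $\EE[\mu_{\mathbf J}]=q\mu$.
\item An ordered pair $(i,j)$ with $i\sim j$ has $i\neq j$, so both indices lie in $\mathbf J$ with probability $q^2$. Hence $\EE[\Delta_{\mathbf J}]=q^2\Delta$.
\end{itemize}
By linearity of expectation, $\EE[\mu_{\mathbf J}-\Delta_{\mathbf J}/2]=q\mu-q^2\Delta/2$. So some deterministic $J$ satisfies $\mu_J-\Delta_J/2\ge q\mu-q^2\Delta/2$.

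\textbf{Step 3 (optimization).} The expression $q\mu-q^2\Delta/2$ is maximized at $q=\mu/\Delta$. This choice is admissible, i.e.\ $q\in[0,1]$, exactly because of the hypothesis $\Delta\ge\mu$. The value at this $q$ is
\[\frac{\mu^2}{\Delta}-\frac{\mu^2}{2\Delta}=\frac{\mu^2}{2\Delta}.\]
Combining with Step 1 gives $\pr(X=0)\le e^{-\mu^2/(2\Delta)}$.

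There is no real obstacle in this argument. The only point needing care is that the ordered-pair convention for $\Delta$ matches between \cref{thm:Janson} and the computation of $\EE[\Delta_{\mathbf J}]$, and it does, since both sum over the same ordered pairs. One should also observe that if $\Delta=0$ then the hypothesis forces $\mu=0$, so that degenerate case is excluded or trivial.
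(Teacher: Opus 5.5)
Your proof is correct. It is the standard Alon--Spencer derivation: apply \cref{thm:Janson} to a random subfamily of indices, each kept independently with probability $q=\mu/\Delta$, which is admissible because $\Delta\ge\mu$. The paper gives no proof of its own and simply cites Alon--Spencer (Theorem~8.1.2), where exactly this argument appears.
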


\subsection{Low degeneracy and the Nash-Williams theorem} In the proof of \Cref{thm:bounded degeneracy}, we use the Nash-Williams theorem to decompose low-degeneracy graphs into forests. 
Recall that the \textit{arboricity} of a graph is the minimum number of forests into which its edges can be partitioned.

\begin{thm}[Nash-Williams \cite{nash1964decomposition}]\label{thm:NW} Let $H=(V,E)$ be a graph and $k \ge 1$ be an integer. The edge set $E$ can be partitioned into at most $k$ forests if and only if for every non-empty subset of vertices $U \sub V$, the induced subgraph $H[U]$ has at most $k(|U|-1)$ edges. 
\end{thm}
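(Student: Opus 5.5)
The statement to prove is Nash-Williams' theorem (\Cref{thm:NW}), a classical result. I would treat the two directions separately. One direction is elementary and the other is a matroid-union argument.

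\emph{Necessity.} Suppose $E=F_1\cup\cdots\cup F_k$ with each $F_j$ a forest. For any nonempty $U\sub V$, the restriction $F_j\cap E(H[U])$ is a forest on the vertex set $U$. It therefore has at most $|U|-1$ edges. Summing over $j$ gives $e(H[U])\le k(|U|-1)$.

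\emph{Sufficiency.} I would work in the graphic matroid $M$ of $H$ on ground set $E$, whose rank function is $r(A)=|V(A)|-c(A)$. Here $c(A)$ is the number of components of the spanning subgraph on the vertices touched by $A$, so $r(A)=|V|-c(V,A)$. Forests are exactly the independent sets of $M$. The goal is to show that $E$ is a union of $k$ independent sets. By the Edmonds (Nash-Williams) matroid union/covering theorem, this holds if and only if $|A|\le k\,r(A)$ for every $A\sub E$.

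I would therefore reduce to checking this rank inequality. Given $A$, let $U_1,\ldots,U_c$ be the vertex sets of the nontrivial components of $(V,A)$. Then $r(A)=\sum_i(|U_i|-1)$, and
\[
|A|\le \sum_i e(H[U_i])\le \sum_i k(|U_i|-1)=k\,r(A),
\]
using the hypothesis on each $U_i$. So the hypothesis exactly matches the covering condition, component by component.

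The main obstacle is the matroid covering theorem itself, if one does not want to cite it. To prove it directly, I would use the standard augmenting-path argument. Start from $k$ disjoint forests $F_1,\ldots,F_k$ of maximum total size and suppose some edge $e_0$ is uncovered. Build the set $A$ of edges reachable from $e_0$ by exchange sequences, where $e$ leads to $f$ if $f\in F_j$ lies on the cycle created in $F_j+e$. If some forest could absorb an edge of an exchange path, the resulting swap would increase the total size, contradicting maximality. Otherwise every $F_j\cap A$ spans $A$ in $M$, so $|A|\ge k\,r(A)+1$, because $e_0$ is an extra edge. This contradicts the rank inequality established above, completing the proof.
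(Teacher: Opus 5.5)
\textbf{Main argument: correct, and more than the paper gives.} The paper does not prove this theorem. It cites Nash-Williams and uses only the sufficiency direction, in Corollary~\ref{cor:NW}.

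Your necessity argument is correct. So is your reduction of sufficiency to Edmonds' covering theorem: each edge of $A$ lies inside a single nontrivial component $U_i$ of $(V,A)$, so $|A|\le\sum_i e(H[U_i])\le k\sum_i(|U_i|-1)=k\,r(A)$. As a proof by citation this is complete.

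Be aware that Edmonds' covering theorem is the matroid generalization of the very statement you are proving. The citation therefore carries essentially all of the content, and the only graph-specific part of your argument is this short translation of the rank condition.

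For the paper's purposes even less is needed. Running the ordering construction of Claim~\ref{claim: degen decomp} with $k=D$ gives pieces in which every vertex has at most one earlier neighbour. Such graphs are forests, since the latest vertex of any cycle would have two earlier neighbours. So Corollary~\ref{cor:NW} does not actually require Nash-Williams.

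\textbf{The augmenting-path sketch has a gap.} If you want the sketch to replace the citation, the step ``if some forest could absorb an edge of an exchange path, the resulting swap would increase the total size'' does not hold as stated. Carrying out all the swaps along an \emph{arbitrary} exchange path need not produce forests once the same forest is visited twice.

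Here is an example. Take $F_1=\{a'a,ab,bc,cc'\}$, $F_2=\{ac',c'a',a'b\}$ and the uncovered edge $e_0=ac$. Then $ac\to ab\to a'c'\to bc$ is a valid exchange sequence, and $bc$ can be absorbed by $F_2$. But performing the swaps turns $F_1$ into $\{a'a,ac,cc',c'a'\}$, which is a $4$-cycle.

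The standard repair has two parts. First, take a \emph{shortest} exchange path from $e_0$ to an absorbable edge. Second, invoke the lemma that simultaneous exchanges $I-\{y_1,\ldots,y_s\}+\{x_1,\ldots,x_s\}$ in a matroid preserve independence when the pairs $(x_i,y_i)$ form the unique perfect matching of the exchange graph, which minimality of the path guarantees. In the example the path is not shortest: the arc $ac\to bc$ is a shortcut, and it is exactly what gives $F_1$'s exchange pairs a second perfect matching.

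With that lemma added, your closure argument is correct and completes the proof: every $F_j\cap A$ spans $A$, hence $|A|\ge k\,r(A)+1$, contradicting the rank inequality.
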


The following immediate consequence relates arboricity to degeneracy.

\begin{cor}\label{cor:NW}
    Let $H$ be a graph with the degeneracy at most $D$. Then the arboricity of $H$ is at most $D$.
\end{cor}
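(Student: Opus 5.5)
The plan is to apply \Cref{thm:NW} with $k=D$. It suffices to check the density condition: for every nonempty $U\sub V$, the induced subgraph $H[U]$ has at most $D(|U|-1)$ edges.

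Fix a nonempty $U \sub V$ and set $J=H[U]$. Since degeneracy does not increase under taking subgraphs (every nonempty subgraph of $J$ is also a subgraph of $H$), we have $\d(J)\le D$. By the equivalent ordering characterization recalled before \Cref{thm:bounded degeneracy}, we can order the vertices of $J$ as $v_1,\dots,v_{|U|}$ so that each $v_j$ has at most $D$ neighbors among $v_1,\dots,v_{j-1}$.

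Now count each edge of $J$ at its later endpoint. The first vertex $v_1$ has no preceding vertices, so it contributes no edges. Each of the remaining $|U|-1$ vertices contributes at most $D$ edges. Therefore
\[
e(J)\le D(|U|-1).
\]

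This holds for every nonempty $U$, so \Cref{thm:NW} shows that $E(H)$ partitions into at most $D$ forests, i.e., the arboricity of $H$ is at most $D$.

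The only point needing care is the ``$-1$'': one must use that $v_1$ contributes nothing, rather than the weaker bound $e(J)\le D|U|$. There is no real obstacle beyond this. The case $D=0$ is trivial, since then $H$ is edgeless.
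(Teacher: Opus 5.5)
Your proof is correct and follows the paper's argument: both verify the Nash-Williams condition $e_J \le D(v_J-1)$ by ordering the vertices of $J$ so that each has at most $D$ earlier neighbors, and noting that the first vertex contributes no edges. The paper obtains the ordering by restricting a degeneracy ordering of $H$, whereas you invoke $\d(J)\le D$; your separate handling of $D=0$ is appropriate, since \Cref{thm:NW} is stated only for $k\ge 1$.
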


\begin{proof}
    Take any subgraph $J$ of $H$ with $v_J \ge 2$. Restrict a $D$-degeneracy ordering of $H$ to $J$. Then every vertex in $J$ has at most $D$ neighbors in $J$ preceding it in this ordering, and the first vertex has none, so $e_J \le D(v_J-1)$. This holds for every such $J$, so applying \Cref{thm:NW} yields the conclusion.
\end{proof}

We also need the following two elementary facts about degeneracy. 
Recall that $d(H)$ and $\d(H)$ denote, respectively, the maximum subgraph density and the degeneracy of $H$.
\begin{claim}\label{claim:degen vs mad}
For every graph $H$, we have $d(H) \leq \d(H)\leq 2d(H)$.
\end{claim}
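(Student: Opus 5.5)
We prove the two inequalities separately. Both come straight from the definitions: $d(H)$ is the maximum of $e_J/v_J$ over subgraphs $J$ with $v_J\ge 1$, and $\d(H)$ is the largest minimum degree of a nonempty subgraph, or equivalently the smallest $d$ for which there is an ordering where each vertex has at most $d$ earlier neighbors.

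\emph{Lower bound $d(H)\le \d(H)$.} Let $D=\d(H)$ and fix a degeneracy ordering of $H$. Take any subgraph $J$ with $v_J\ge 1$ and restrict the ordering to $V(J)$. Each vertex of $J$ has at most $D$ neighbors in $J$ preceding it. Counting each edge of $J$ at its later endpoint gives $e_J\le D v_J$, so $e_J/v_J\le D$. This is the same counting as in the proof of \Cref{cor:NW}. Maximizing over $J$ gives $d(H)\le D$.

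\emph{Upper bound $\d(H)\le 2d(H)$.} Use the minimum-degree characterization. Let $J$ be a nonempty subgraph of $H$ with $\delta(J)=\d(H)$. Then
\[
2e_J=\sum_{v\in V(J)}\deg_J(v)\ge \delta(J)\,v_J=\d(H)\,v_J ,
\]
so $\d(H)\le 2e_J/v_J\le 2d(H)$.

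The only point needing care is the equivalence of the two definitions of degeneracy, which the paper treats as standard. If we want to avoid invoking it, we can build the ordering greedily by repeatedly removing a vertex of degree at most $\d(H)$ and reversing the removal order. With that settled, both directions are routine, and I expect no real obstacle.
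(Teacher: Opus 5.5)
Your proof is correct and is essentially the paper's argument. For the lower bound you count edges of $J$ at their later endpoint in a degeneracy ordering. For the upper bound you take a subgraph whose minimum degree equals the degeneracy and compare its minimum degree with its average degree $2e_J/v_J$.
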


\begin{proof}
    Use $D$ for $\d(H)$. For the first inequality, consider any subgraph $J$ of $H$ with $v_J \ge 1$. As in the proof of \Cref{cor:NW}, by considering a $D$-degeneracy ordering of $H$, we obtain that $e_J/v_J \le D$. Therefore, $d(H) \le D=\d(H)$.

    For the other inequality, choose a nonempty subgraph $J$ of $H$ whose minimum degree is precisely $D$. Since the average degree of $J$ is at least its minimum degree, $\d(H)=D \le 2e_J/v_J \le 2d(H).$
\end{proof}

\begin{claim} \label{claim: degen decomp}
For every positive integer $k$, every graph $H$ admits a $k$-decomposition $(H_1, \ldots, H_k)$ such that 
$\d(H_i) \leq \ceil{\d(H) / k}$ for every $i\in[k]$.
\end{claim}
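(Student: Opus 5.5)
Fix a degeneracy ordering $v_1,\dots,v_N$ of $H$ with $D:=\d(H)$, so each vertex $v_j$ has a set $B_j$ of at most $D$ back-neighbors, i.e.\ neighbors $v_i$ with $i<j$. The plan is to split each $B_j$ into $k$ pieces, as evenly as possible, and to assign the back-edges from $v_j$ to the pieces. Concretely, partition $B_j$ into sets $B_j^{(1)},\dots,B_j^{(k)}$, each of size at most $\ceil{D/k}$; this is possible since $|B_j|\le D\le k\ceil{D/k}$. Then let $H_i$ be the spanning subgraph of $H$ with edge set
\[
E(H_i)=\bigl\{\{v_j,u\}: j\in[N],\ u\in B_j^{(i)}\bigr\}.
\]
Every edge of $H$ has the form $\{v_j,u\}$ with $u$ a back-neighbor of $v_j$, where $v_j$ is the later endpoint in the ordering. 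So every edge lies in exactly one $E(H_i)$, and $(H_1,\dots,H_k)$ is a $k$-decomposition of $H$.

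Next we check the degeneracy bound. Use the same ordering $v_1,\dots,v_N$ for $H_i$. In $H_i$, the back-neighbors of $v_j$ are exactly the vertices of $B_j^{(i)}$. Indeed, an edge of $H_i$ at $v_j$ going to an earlier vertex must have been assigned from $v_j$'s own back-neighborhood, because edges assigned from later vertices $v_{j'}$ go backward from $v_{j'}$ and hence forward from $v_j$. Thus every vertex of $H_i$ has at most $\ceil{D/k}$ earlier neighbors. By the ordering characterization of degeneracy recalled before \Cref{thm:bounded degeneracy}, this gives $\d(H_i)\le\ceil{\d(H)/k}$.

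The only point needing care is the equivalence between the ordering definition and the minimum-degree definition of degeneracy. Only the easy direction is used here: an ordering in which every vertex has at most $d$ earlier neighbors forces every nonempty subgraph to have minimum degree at most $d$, since one can look at the last vertex of the subgraph in the ordering. This direction is immediate, so there is no real obstacle.
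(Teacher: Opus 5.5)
Your proposal is correct and follows the paper's proof. Both fix a degeneracy ordering, split each vertex's back-neighbourhood into $k$ parts of size at most $\ceil{D/k}$, assign back-edges accordingly, and reuse the same ordering to certify $\d(H_i)\le\ceil{\d(H)/k}$. Your extra remark on the easy direction of the ordering characterization is a harmless addition.
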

\begin{proof}
Let $D:=\d(H)$, and fix an ordering $v_1, \ldots, v_{v_H}$ in which each vertex has at most $D$ neighbors preceding it. For each $j$, partition the preceding neighbors of $v_j$ into $k$ parts, each of size at most $\ceil{D/k}$, and assign the edges from $v_j$ to the vertices in the $i$-th part to $H_i$. Every edge is assigned exactly once when its later endpoint is considered, so $(H_1, \ldots, H_k)$ is a $k$-decomposition of $H$. In the same ordering, every vertex has at most $\ceil{D/k}$ preceding neighbors in each $H_i$. Hence $\d(H_i) \le \ceil{D/k}$ for every $i \in [k]$.
\end{proof}

\section{Proof of \texorpdfstring{\Cref{thm:small graphs}}{Theorem 1.10}}\label{sec:small graphs}

Throughout the proof, we fix $H$ and $k$ and suppress dependency of constants on them. Recall from the introduction that any fixed graph $H$ satisfies $p_c(H) = \Theta(n^{-1/d(H)})$, which gives $r_k(H)=\Theta(n^{-1/d_k(H)})$. The upper bound of \cref{thm:small graphs} then follows immediately from \Cref{thm:k-thr}. 

The lower bound follows directly from the proposition below:

\begin{prop}\label{prop:k-thr_lb}
    For any fixed graph $H$ with at least one edge and positive integer $k$, there is a constant $c$ with the following property. If $p \le cn^{-1/d_k(H)}$, then $p$ violates \eqref{eq:k-thr} for $\cI=\cI_H$.
\end{prop}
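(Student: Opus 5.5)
To prove \Cref{prop:k-thr_lb}, the plan is to exhibit, for every $p\le cn^{-1/d_k(H)}$, an explicit decreasing family $\cD\sub 2^{\binom{[n]}2}$ with $\mu_p(\cD)\ge 1-1/(2k)$ and $\cD_{(k)}\cap\cI_H=\emptyset$. The natural choice is to forbid all ``too dense'' pieces of $H$. Let $\mathcal J$ be the (finite, $n$-independent) set of isomorphism classes of subgraphs $J$ of $H$ with $v_J\ge1$ and $e_J/v_J\ge d_k(H)$. Define
\[
\cD:=\{G\in 2^{\binom{[n]}2}: G \text{ contains no copy of any } J\in\mathcal J\}.
\]
This family is clearly decreasing, since deleting edges cannot create a copy of some $J$.

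First, I will check that $\cD_{(k)}$ contains no copy of $H$. Suppose $G=D_1\cup\cdots\cup D_k$ with $D_i\in\cD$, and let $H'\sub G$ be a copy of $H$. Set $H_i:=H'\cap D_i$, the subgraph of $H'$ with edge set $E(H')\cap E(D_i)$. Then $(H_1,\ldots,H_k)$ is a $k$-decomposition of $H'\cong H$. By the definition of $d_k(H)$, some $i$ satisfies $d(H_i)\ge d_k(H)$. Hence $H_i$ has a subgraph $J$ with $e_J/v_J=d(H_i)\ge d_k(H)$. This $J$ is isomorphic to a subgraph of $H$, so $J\in\mathcal J$ up to isomorphism. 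But $J\sub H_i\sub D_i$, which contradicts $D_i\in\cD$.

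Second, I will bound the measure by a union bound over $\mathcal J$. For each $J\in\mathcal J$, the expected number of copies of $J$ in $G_{n,p}$ is at most
\[
n^{v_J}p^{e_J}=\left(n\,p^{e_J/v_J}\right)^{v_J}\le \left(n\,p^{d_k(H)}\right)^{v_J}\le c^{d_k(H)v_J}\le c^{d_k(H)}.
\]
The middle inequality uses $p\le1$ and $e_J/v_J\ge d_k(H)$. The next one uses $p^{d_k(H)}\le c^{d_k(H)}/n$. The last one takes $c\le 1$ and uses $v_J\ge1$. Note that $d_k(H)>0$ because $H$ has an edge. Since $|\mathcal J|$ depends only on $H$, choosing $c$ small enough (depending on $H$ and $k$) gives $\mu_p(\cD^c)\le |\mathcal J|\,c^{d_k(H)}\le 1/(2k)$, by Markov's inequality.

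Thus $\cD$ witnesses the failure of \eqref{eq:k-thr} for every such $p$. I do not expect a serious obstacle. The only care needed is in the first step: the sets $H_i$ must be genuine subgraphs of a copy of $H$, so that the dense subgraph they produce lies in the forbidden list. This is exactly why $\mathcal J$ is taken to consist of subgraphs of $H$ rather than arbitrary graphs. It also explains why the density parameter $d_k$, defined through decompositions, is the right quantity.
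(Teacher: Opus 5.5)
Your proposal is correct and follows essentially the same route as the paper. You use a first-moment union bound over a bounded list of dense pieces to get $\mu_p(\cD)\ge 1-1/(2k)$, and then take a copy of $H$ in $D_1\cup\cdots\cup D_k$ and use its induced $k$-decomposition to force a contradiction. The only cosmetic difference is that you forbid every subgraph of $H$ of density at least $d_k(H)$, whereas the paper forbids one chosen dense piece $H(\mathfrak D)$ per decomposition; both lists have size bounded in terms of $H$ alone.
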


\begin{proof}
Let $N_H$ be the number of subgraphs of $H$ with at least one edge and no isolated vertices. Since every such subgraph is determined by its edge set, $N_H\le 2^{e_H}$.
Suppose $p \le cn^{-1/d_k(H)}$, where $c<1/(2kN_H)$. 
Deleting isolated vertices from a non-edgeless piece of a $k$-decomposition changes neither its edge set nor its maximum density. Therefore, we may assume throughout the proof that every non-edgeless piece of a $k$-decomposition has no isolated vertices.

For every $k$-decomposition $\mathfrak D=(H_1,\ldots,H_k)$ of $H$, choose a member $H(\mathfrak D)$ satisfying $d(H(\mathfrak D)) \ge d_k(H)$. Such a member exists by the definition of $d_k(H)$. Since $H$ has at least one edge, $d_k(H)>0$, so $H(\mathfrak D)$ has at least one edge.

Let $J$ be a subgraph of $H(\mathfrak D)$ attaining its maximum density; that is, $e_J/v_J = d(H(\mathfrak D))$. Letting $X_J$ count copies of $J$ in $G_{n,p}$, we have
\[\mathbb E_pX_J \le n^{v_J}p^{e_J} \le c^{e_J}n^{v_J-e_J/d_k(H)}\le c<\frac{1}{2kN_H}.\]
By Markov's inequality, $\mu_p(\cI_J)=\pr(X_J \ge 1)<1/(2kN_H)$. 
Since $J$ is a subgraph of $H(\mathfrak D)$, this implies
\[\mu_p(\cI_{H(\mathfrak D)})<\frac{1}{2kN_H}.\]
Set
\[\cI^*:=\bigcup\{\cI_{H(\mathfrak D)}:\text{$\mathfrak D$ is a $k$-decomposition of $H$}\},\qquad \cD:=2^{{\binom{[n]}2}}\setminus \cI^*.
\]
Then $\cD$ is decreasing. Also, there are at most $N_H$ possible graphs $H(\mathfrak D)$.  Hence $\mu_p(\cI^*)<1/(2k)$, and so $\mu_p(\cD)>1-1/(2k)$.

It remains to show that $\cD_{(k)} \cap \mathcal I_H=\emptyset$. 
For the sake of contradiction, suppose $D_1,\ldots,D_k\in\cD$ and the graph on $[n]$ with edge set $D_1 \cup \cdots \cup D_k$ contains a copy $H'$ of $H$. For each $i \in [k]$, let $H_i'$ be the subgraph of $H'$ with edge set $E(H_i')=D_i \cap E(H')$ and isolated vertices omitted. Then $(H_1',\ldots, H_k')$ is a $k$-decomposition of $H'$. 
Pulling this decomposition back along an isomorphism from $H$ to $H'$ 
gives a $k$-decomposition $\mathfrak D=(H_1, \ldots, H_k)$ of $H$. Then $H(\mathfrak D)$ is one of the $H_i$, and the corresponding $D_i$ contains a copy of $H(\mathfrak D)$. Thus $D_i \in \cI_{H(\mathfrak D)} \sub \cI^*$,  contradicting $D_i\in\cD$. Thus $p$ violates \eqref{eq:k-thr} for $\cI=\cI_H$.
\end{proof}

\section{Proof of \texorpdfstring{\Cref{thm:spanning graphs}}{Theorem 1.11}}\label{sec:spanning graphs} By \cref{thm:k-thr}, it suffices to decompose each graph $H$ in the statement as $H=H_1 \cup H_2$ such that 
$p_c(H_i)=O(q(H))$ for $i=1,2$.
In the following subsections, we consider each of the graphs in the statement in turn. Recall from \cref{subsec:exp_thr} that for any $H$, we have $\pE(H) \le q(H)$.

\subsection{Bounded-degree spanning trees}\label{subsec:bdst}
Suppose $H$ is a spanning tree with maximum degree $d$, where $d$ is a constant independent of $n$.

We first observe that in this case, we have $q(H) \geq \pE(H) = \Omega(1/n)$. To see the lower bound on $\pE(H)$, let $X_H$ count copies of $H$ in $G_{n,p}$. Since $H$ has $n-1$ edges and at most $n!$ labeled copies in $K_n$, we have $\mathbb E_pX_H \le n!p^{n-1}$, which is less than $1/2$ for, say, $p=1/(2n)$. This violates the defining condition for $\pE(H)$ in \eqref{def:pE}, so we have $\pE(H)=\Omega(1/n)$.

Our proof proceeds by decomposing $H$ into two forests, each with at most $2n/3+1$ vertices, so that we can apply \cref{thm:embedding_trees}.
We use the following standard fact about trees.
\begin{fact}\label{fact:centroid}
Any $n$-vertex tree has a vertex $v$, called a \emph{centroid}, such that after removing $v$ from the tree, each connected component has at most $n/2$ vertices.
\end{fact}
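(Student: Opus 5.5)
The plan is to prove \Cref{fact:centroid} by the standard ``walk toward the heavy side'' argument. Let $T$ be an $n$-vertex tree. For a vertex $v$, let $w(v)$ denote the maximum number of vertices in a connected component of $T-v$. We want a vertex $v$ with $w(v)\le n/2$.

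\textbf{Construction.} I will choose $v$ minimizing $w(v)$ over all vertices of $T$. The $n=1$ case is trivial, since then $T-v$ is empty.

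\textbf{Key step.} Suppose for contradiction that $w(v)>n/2$. Let $C$ be the component of $T-v$ with $|C|=w(v)>n/2$, and let $u$ be the unique neighbor of $v$ in $C$. I will then show $w(u)<w(v)$ by examining the components of $T-u$. They are of two kinds.
\begin{itemize}
\item The components of $C-u$. Each lies inside $C\setminus\{u\}$, so each has size at most $|C|-1<w(v)$.
\item The single component containing $v$. It consists of $v$ together with all components of $T-v$ other than $C$. Its size is therefore $n-|C|<n/2<w(v)$.
\end{itemize}
Hence $w(u)<w(v)$, which contradicts the minimality of $w(v)$. So $w(v)\le n/2$, as claimed.

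\textbf{Main obstacle.} The only real point is correctly identifying the components of $T-u$. This uses the tree structure: because $T$ is a tree, deleting $u$ separates $C\setminus\{u\}$ from the rest of $T$. Everything else is bookkeeping.

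An equivalent alternative avoids the minimality argument. Start anywhere and repeatedly move from $v$ into the neighbor lying in a component of size greater than $n/2$. The computation above shows this walk never steps back to the previous vertex, so it never revisits a vertex. Since $T$ is finite, the walk terminates at a centroid.
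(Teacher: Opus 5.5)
Your proof is correct and complete. The key point is that the only edge of $T$ between $C$ and $V(T)\setminus C$ is $uv$, because $v$ has exactly one neighbor in $C$. Hence the components of $T-u$ are exactly those of $C-u$ together with the connected set $V(T)\setminus C$ of size $n-|C|<n/2$, which gives $w(u)<w(v)$. The paper gives no proof of this fact and simply cites it as standard. Your extremal argument is the standard proof. So is your walk-toward-the-heavy-component version, which works because a non-backtracking walk in a tree is a path.
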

Given the spanning tree $H$ with $n$ vertices, let $v$ be a centroid of $H$. Let $A_0, B_0\subseteq V(H) \setminus \{v\}$ be formed as follows. Sort the components of $H\setminus \{v\}$ in decreasing order of size. Put all the vertices from each component into $B_0$, stopping when adding the vertices of the next component would bring $|B_0|$ to at least $2n/3$. Put the rest of the components' vertices into $A_0$. We claim that
\[|A_0|, |B_0| < 2n/3.\]
The upper bound on $|B_0|$ is obvious from the construction. For the upper bound on $|A_0|$, since the components of $H \setminus \{v\}$ have at most $n/2$ vertices each, $B_0$ is nonempty; furthermore, the components are in decreasing order, so if $|B_0|< n/3$, then we can add the next component to $B_0$ still maintaining $|B_0| < 2n/3$. Thus, $|B_0| \geq n/3$, so $|A_0|=n-1-|B_0|<2n/3$.

Let $A = A_0\cup\{v\}$ and $B = B_0 \cup\{v\}$. 
Each of the graphs $T_A:=H[A]$ and $T_B:=H[B]$ (as usual, $G[X]$ denotes the subgraph of $G$ induced by the vertex subset $X$) is a tree on at most $2n/3+1$ 
vertices with maximum degree at most $d$. Indeed, $A$ consists of $v$ together with a collection of whole components of $H \setminus \{v\}$, so the induced graph $H[A]$ is connected and acyclic; similarly for $B$.   
\Cref{thm:embedding_trees} now implies that $p_c(T_A),p_c(T_B)=O(1/n)$. Also, we have $H = T_A \cup T_B$, so we have found the desired decomposition of $H$.  

\subsection{Powers of a Hamiltonian cycle}
Fix an integer $m \ge 1$, and let $H$ be the $m$-th power of a Hamiltonian cycle. Since $e_H=mn$ and there are at most $(n-1)!/2$ copies of $H$ in $K_n$, we have
\[\mathbb E_pX_H\leq (n-1)! p^{mn}/2,\]
yielding $q(H) \geq \pE(H) = \Omega(n^{-1/m})$. 

Suppose first that $m=1$, so that $H$ is a Hamiltonian cycle $C_n$. Decompose
$H$ into two paths $H_1$ and $H_2$ of lengths
$\lfloor n/2\rfloor$ and $\lceil n/2\rceil$, respectively. By \cref{thm:embedding_trees}, if $p = c/n$ for a sufficiently large constant $c$, then $G_{n,p}$ contains copies of both $H_1$ and $H_2$ with high probability. Therefore, $p_c(H_i) = O(q(H))$ for $i=1,2$.

For $m\geq 2$, recall that $p_k(H)$ is non-increasing in $k$, so $p_2(H) \leq p_1(H) = p_c(H)$.
For $m=2$,  Kahn, Narayanan, and the third author~\cite{kahn2021threshold} proved that $p_c(H)=O(n^{-1/m})$, while for $m\geq 3$ this follows from an earlier result of Riordan~\cite{riordan2000spanning}. Therefore, $p_2(H)=O(n^{-1/m})=O(q(H))$.

\subsection{\texorpdfstring{$F$}{F}-factor for fixed \texorpdfstring{$F$}{F}}

Finally, we consider the case where $H$ is an $F$-factor for a fixed graph $F$ with at least one edge. Note that $v_F$ and $e_F$ are constants independent of $n$, and we will often suppress dependency of constants on $F$ throughout the subsection. Define that the \textit{1-density} of $F$ to be
\[m_1(F) = \max\left\{\frac{e_J}{v_J-1}: \text{$J$ is a subgraph of $F$ with at least one edge}\right\}.\]

We first show that $q(H)\ge  \pE(H)=\Omega(n^{-1/m_1(F)})$. Fix a subgraph $J$ of $F$ that attains $m_1(F)$, and let $I$ be the subgraph of $H$ formed by taking the union of the $n/v_F$ vertex-disjoint copies of $J$, obtained by selecting one such copy from each copy of $F$ in $H$. Then 
\[\text{$v_I=v_J \cdot n/v_F$, \quad $e_I=e_J\cdot n/v_F$, \quad and \quad $\aut(I) \geq (n/v_F)!\cdot \aut(J)^{n/v_F}$,}\]
so
\[\mathbb E_pX_I=\frac{(n)_{v_I}}{\aut(I)}\cdot p^{e_I}\le \frac{n^{v_J\cdot n/v_F}}{(n/v_F)!\cdot \aut(J)^{n/v_F}}\cdot p^{e_J\cdot n/v_F} \le \left(\frac{e\cdot v_F}{\aut(J)}n^{v_J-1}p^{e_J}\right)^{n/v_F}.\]
After substituting $p=cn^{-(v_J-1)/e_J}$, the quantity inside the parentheses becomes $ev_F/\aut(J) \cdot c^{e_J}$, which is independent of $n$. Since $J$ is fixed, choose $c=c(F)$ sufficiently small that $ev_F/\aut(J)\cdot c^{e_J}<1/2$. Then $\mathbb E_pX_I<1/2$, yielding the desired conclusion.

Now, partition the copies of $F$ forming $H$ so that $H_1$ consists of $\ell:=\lceil n/(2v_F) \rceil$ vertex-disjoint copies of $F$, and let $H_2$ consist of the remaining copies. Thus $H = H_1\cup H_2$. Since $H_2$ is isomorphic to a subgraph of $H_1$, we have $p_c(H_2) \le p_c(H_1)$, so it suffices to show that $p_c(H_1)=O\bigl(n^{-1/m_1(F)}\bigr)$.

Let $p = C n^{-1/m_1(F)}$ where $C=C(F)$ is a sufficiently large constant. Fix a realization $G$ of $G_{n,p}$. Repeatedly choose a copy of $F$ in the current graph and delete its $v_F$ vertices. If this procedure terminates before $\ell$ copies have been chosen, then the remaining graph is $F$-free and has more than $n/2$ vertices. Consequently, $G$ contains an $F$-free set of $n/2$ vertices. It therefore suffices to show that, for $G \sim G_{n,p}$, with high probability, every set of $n/2$ vertices spans a copy of $F$. 
We prove this using the Janson inequalities.

Fix $S \sub [n]$ with $|S|=n/2$, and let $Y_S$ count the copies of $F$ in $G[S]$. Then
\[\mu:=\mathbb E[Y_S]=\Theta(n^{v_F}p^{e_F})=\Theta(C^{e_F}n^{v_F-e_F/m_1(F)}).\]
Since $m_1(F) \ge \frac{e_F}{v_F-1}$, there is a constant $a=a(F)$ such that
\[\mu \ge a\cdot C^{e_F} n=:C'n.\]
In particular, $C'$ can be made arbitrarily large by choosing $C$ sufficiently large.

Next, with $t$ the number of copies of $F$ in the complete graph $K_{n/2}$, label the copies from $1$ to $t$. For each $j = 1,\ldots, t$, let $X_j$ be the indicator random variable for the presence of the $j$-th copy of $F$ in $G[S]$. 
 For distinct $i,j \in [t]$, write $i\sim j$ if the corresponding copies of $F$ 
 share at least one edge. Let $\Delta = \sum_{(i,j):\,i\sim j} \EE[X_iX_j]$, where the sum is over ordered pairs. 

If $\Delta \leq \mu$, then 
\cref{thm:Janson} gives 
$\pr(Y_S = 0) \leq e^{-\mu + \Delta/2} \leq e^{-C'n/2}.$
Taking a union bound over the choices of the set $S$, the probability that there exists a set $S\subseteq V(G)$ of size $n/2$ such that $G[S]$ fails to contain a copy of $F$ is at most 
$$\binom{n}{n/2} e^{-C'n/2} \leq 2^{n} e^{-C'n/2} = o(1).$$

Next, suppose that 
$\Delta \geq \mu$. For each graph $J$ that can arise as the intersection of two distinct copies of $F$ sharing at least one edge, let $\gD_J$ denote the contribution to $\gD$ from ordered pairs of copies with intersection isomorphic to $J$. In particular, $J$ is a subgraph of $F$ with $e_J \ge 1$. The union of such a pair of copies of $F$ has $2v_F-v_J$ vertices and $2e_F-e_J$ edges, so  
$$\gD_J=O(n^{2v_F-v_J}p^{2e_F-e_J}) = O\left(\frac{\mu^2}{n^{v_J}p^{e_J}}\right).$$ Since $p = Cn^{-1/m_1(F)} \geq Cn^{-(v_J-1)/e_J}$, we have $n^{v_J}p^{e_J} \ge C^{e_J}n$. Consequently, 
$\gD_J=O(C^{-e_J}\mu^2/n)=O(C^{-1}\mu^2/n)$. Since there are only $O_F(1)$ possible intersection 
types, 
\[\Delta \le \eps\mu^2/n,\]
where $\eps=\eps(C) \to 0$ as $C \to \infty$.
Now, \cref{thm:extJanson} yields $\pr(Y_S = 0) \leq e^{-\mu^2/(2\Delta)} \leq e^{-C'' n},$ where $C''$ can be made large by taking $C$ sufficiently large. Again, taking a union bound over the choices of the set $S$, the probability that there exists a set $S\subseteq V(G)$ of size $n/2$ such that $G[S]$ fails to contain a copy of $F$ is at most $$\binom{n}{n/2} e^{-C''n} \leq 2^{n} e^{-C''n} = o(1),$$ 
concluding the proof.

\section{Forest decomposition and proof of \texorpdfstring{\Cref{thm:bounded degeneracy}}{Theorem 1.12}}\label{sec: bounded degeneracy}

The proof of \Cref{thm:bounded degeneracy} reduces bounded-degeneracy graphs to forests using the Nash-Williams theorem (\Cref{thm:NW}). The main ingredient is the following decomposition result for forests.

\begin{lem}\label{lem:all trees}
Every forest $F$ admits a $6$-decomposition $(\tilde F_1, \ldots, \tilde F_6)$ such that $p_c(\tilde F_i) = O(q(F))$ for each $i\in[6]$. 
\end{lem}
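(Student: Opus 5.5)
The plan is to reduce to star forests and then handle star forests directly, using $\pE(F)\le q(F)$ from \Cref{subsec:exp_thr} as the only lower bound on $q(F)$. It therefore suffices to find a $6$-decomposition with $p_c(\tilde F_i)=O(\pE(F))$ for each $i$. Since each $\tilde F_i$ will be a subgraph of $F$, we have $\pE(\tilde F_i)\le \pE(F)$. So it is enough to show $p_c(\tilde F_i)=O(\pE(\tilde F_i))$ for pieces of a special shape; this is a Kahn--Kalai-type statement with no logarithmic loss.

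\textbf{Step 1: reduce to two star forests.} Root every component of $F$ and assign each edge to its parent endpoint. Let $S_{\rm odd}$ consist of the edges whose parent endpoint has odd depth, and $S_{\rm even}$ the rest. In each class, every vertex is either the centre of the star formed by its child edges or a leaf of its parent's star, never both, so $S_{\rm odd}$ and $S_{\rm even}$ are star forests. This gives $F=S_{\rm odd}\cup S_{\rm even}$.

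\textbf{Step 2: split each star forest into three nearly-spanning pieces.} The logarithmic loss typical for spanning star forests (e.g.\ a perfect matching, or a single star $K_{1,n-1}$) comes from the need to cover almost every vertex. To avoid it, I would split each star forest $S$ into three pieces $S^1,S^2,S^3$, each spanning at most about $2n/3$ vertices, in the spirit of the centroid split in \Cref{subsec:bdst}. Order the stars by size and distribute them greedily. A star too large to fit is cut by dividing its leaves among the pieces; the centre may be reused, since this is a decomposition of edges, not vertices. This yields six pieces in total.

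\textbf{Step 3: bound the threshold of each piece.} For a star forest $S'$ leaving at least $n/3$ vertices uncovered, I would show $p_c(S')=O(\pE(S'))$ by a greedy embedding in $G_{n,p}$ with $p=C\pE(S')$.
\begin{itemize}
\item Process the stars in decreasing order of size.
\item For a star with $s$ leaves, find an unused vertex having at least $s$ neighbours among the at least $n/3$ unused vertices.
\end{itemize}
The required estimate is that, for each $s$, there are enough vertices of degree at least $s$ into every large set, measured against the number $m_s$ of stars of size at least $s$. By \eqref{eq:pE-alternate} applied to the subgraph $I$ consisting of the $m_s$ largest stars, $\pE(S')\gtrsim \bigl(m_s!\,(s!)^{m_s}/n^{m_s(s+1)}\bigr)^{1/(m_s s)}\approx s\,m_s^{1/s}/(e\,n^{1+1/s})$. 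With $p$ a large multiple of this, the expected number of vertices having $s$ neighbours in a fixed set of size $n/3$ is a large multiple of $m_s$. Chernoff bounds together with a union bound over the at most $2^n$ choices of the unused set, in the manner of the $F$-factor argument via \Cref{thm:Janson} and \Cref{thm:extJanson}, should then give the needed degree counts simultaneously for all relevant $s$. The unused vertices are what prevent a coupon-collector $\log n$ term.

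\textbf{Main obstacle.} I expect Step 3 to be the hard part, because it must be uniform over all star-size profiles: many tiny stars (matching-like, where the exponent $1/s$ is weakest), a few huge stars (where $s$ is comparable to $n$ and Chernoff bounds are tight), and mixtures of these.
\begin{itemize}
\item For large $s$, the union bound over unused sets may be too costly. There I would first try to embed the centres of the large stars before choosing anything else, restricting the union bound to sets of size $O(s\,m_s)$.
\item For small $s$, I would handle the small stars via Janson-type estimates as in the $F$-factor proof, grouping stars of comparable size dyadically so that only $O(\log n)$ scales appear. I would also check that the constant in front of $\pE$ does not accumulate across scales.
\end{itemize}
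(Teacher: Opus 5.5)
\textbf{Verdict: genuine gap in Step 3.} Your Steps 1 and 2 are sound. The parity split is the first half of Claim~\ref{claim:star_forest_decomp}. Cutting a star's leaves while reusing its centre is a legitimate way to get pieces that miss linearly many vertices, and it is simpler than the paper's leaf/degree-$2$ case analysis. The target of Step 3, $p_c(S')=O(\pE(S'))$, is also true. The problem is that Step 3 carries the whole analytic content of the lemma, and the mechanism you propose there cannot work.

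\textbf{Why the union bound fails.} Your greedy embedding makes graph-dependent choices. So Chernoff plus a union bound needs, for every possible set $U$ of already-used vertices, a failure probability below $1/N$, where $N$ is the number of such sets. Your per-scale computation only makes the relevant mean a large multiple of $m_s$, and that can be far smaller than $\log N$. Concretely, let $S'$ be $\sqrt n$ vertex-disjoint paths of length $2$, so $\pE(S')=\Theta(n^{-5/4})$ and $p=Cn^{-5/4}$.
\begin{itemize}
\item For a fixed pool of size $n/3$, the number of outside vertices with two neighbours in the pool is a sum of independent indicators with mean $\Theta(C^2\sqrt n)$. It is therefore $0$ with probability at least $e^{-O(C^2\sqrt n)}$.
\item The possible used sets number $\binom{n}{O(\sqrt n)}=e^{\Theta(\sqrt n\log n)}$.
\end{itemize}
So the union bound is vacuous whatever tail inequality (Chernoff or Janson) you feed it. Your fallbacks do not help: there is a single scale $s=2$, and $s\,m_s=O(\sqrt n)$ already. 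The $F$-factor argument you cite works only because its mean is linear in $n$.

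\textbf{What is missing.} The paper avoids union bounds entirely by sequential exposure (Claims~\ref{claim:embedding-A1} and~\ref{claim:embedding-B1}).
\begin{itemize}
\item Candidate centres are inspected one at a time, revealing only the edges from the newly inspected vertex to the current leaf pool. Conditional on the history, each inspection is a fresh trial that succeeds for a star of degree $\ell$ with probability at least $r_\ell=\pr(\bin(n/3,p)\ge\ell)$.
\item Write $N_\ell$ and $M_\ell$ for the numbers of stars of degree exactly $\ell$ and at least $\ell$. Your per-scale estimate gives exactly $r_\ell\ge\min\{1/6,\,c^\ell M_\ell/n\}$.
\item Hence the expected number of inspections is at most $\sum_\ell N_\ell/r_\ell\le 6\sum_\ell N_\ell+n\sum_\ell c^{-\ell}$. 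The geometric series is what makes the argument uniform over all star-size profiles, with no loss across scales.
\end{itemize}
To make the first term small, the paper uses a second ingredient: a split at degree $c_1n\tilde\pE$.
\begin{itemize}
\item Above the cutoff, your bound rearranged as $M_\ell\le n(e^2f(n)/\ell)^\ell$ with $f(n)=n\tilde\pE$ shows that these stars span only $\gd n$ vertices (Claim~\ref{claim:B1-small}). The expectation is then at most $6\gd n+n/(c-1)\le n/4$, and Markov's inequality finishes.
\item Below the cutoff, each inspected vertex succeeds with probability close to $1$, so the discarded vertices are dominated by $\bin(n,\eps/10)$.
\end{itemize}

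\textbf{How to salvage your decomposition.} In each parity forest, take the high-degree stars as one piece. Cut the low-degree stars into two pieces of at most $n/2+1$ vertices each. The resulting six pieces are handled directly by the arguments of Claims~\ref{claim:embedding-A1} (with $\eps=2/5$) and~\ref{claim:embedding-B1}.
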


\begin{proof}[Proof of \cref{thm:bounded degeneracy}]
Let $\d(H)\le D$. By \Cref{cor:NW}, the arboricity of $H$ is at
most $D$, and hence $H$ can be written as the union of $m\leq D$ forests
$F_1,\ldots,F_m$. For each forest $F_j$, since $F_j\subseteq H$, we have
$\cI_H\subseteq \cI_{F_j}$, and therefore $q(F_j)\le q(H)$.

By \cref{lem:all trees}, each forest $F_j$ admits a $6$-decomposition $(\tilde F_{j, 1}, \ldots, \tilde F_{j, 6})$ with
$p_c(\tilde F_{j, i}) = O(q(F_j)) \le O(q(H))$ for each $i\in[6]$. Thus the sequence of all the $\tilde F_{j,i}$'s is a $6D$-decomposition of $H$ with each piece having threshold $O(q(H))$.
Applying \cref{thm:k-thr} yields the conclusion.
\end{proof}

\begin{proof}[Proof of \cref{lem:all trees}] The proof consists of four main steps. Recall our convention that $F = F_1\cup \cdots \cup F_m$ means $E(F) = E(F_1)\cup \cdots \cup E(F_m)$.

\nin \textbf{Step 1 (Decomposition):} We decompose $F$ into the union of three \emph{star forests} (forests where each component is a star) $F_1, F_2, F_3$, each of which spans at most $(1-\Omega(1))n$ vertices. Every $F_i$ is further partitioned into $L_i$ and $H_i$, which comprise its low-degree and high-degree star components, respectively. 

\noindent \textbf{Step 2 (Low-degree embedding):} We greedily embed each $L_i$ into $G_{n,p}$ and show that this embedding succeeds with high probability for $p = O(q(L_i)) \le O(q(F))$.

\noindent \textbf{Step 3 (Size bound on high-degree stars):} We bound the number of vertices in the high-degree stars, showing that $v_{H_i}\le\delta n$ for all $i$, where
$\delta>0$ can be made arbitrarily small by choosing the degree cutoff sufficiently large.

\noindent \textbf{Step 4 (High-degree embedding):} We use the above size bound to prove that $G_{n,p}$ contains $H_i$ with probability at least $1/2$ for $p = O(q(F))$.

\begin{claim}[Decomposition]\label{claim:star_forest_decomp}
Fix $\epsilon = 1/10$. For sufficiently large $n$, any forest $F$ on at most $n$ vertices can be written as a union $F = F_1\cup F_2\cup F_3$ where each $F_i$ is a star forest on at most $(1-\epsilon)n$ vertices.
\end{claim}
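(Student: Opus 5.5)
The plan is to build the three star forests from a rooted depth colouring modulo $3$, and to use the freedom in where tree-leaves are attached to balance the sizes.

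\textbf{Step 1 (the colouring).} Root every component of $F$ at some vertex, and write $\mathrm{dep}(x)$ for the depth of $x$. For $r\in\{0,1,2\}$, let $F_r$ consist of all edges $\{x,\mathrm{par}(x)\}$ with $\mathrm{dep}(\mathrm{par}(x))\equiv r \pmod 3$. Each $F_r$ is a star forest, whose stars are centred at depth-$r$ vertices with their children as leaves. These stars are vertex-disjoint because:
\begin{itemize}
\item a vertex of depth $\equiv r$ is used in $F_r$ only as a centre;
\item a vertex of depth $\equiv r+1$ is used in $F_r$ only as a leaf;
\item no vertex of depth $\equiv r+2$ is used in $F_r$ at all.
\end{itemize}
Hence $F_r$ spans at most $n-a_{r+2}$ vertices, where $a_s$ is the number of vertices of depth $\equiv s$. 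The rest of the argument modifies this construction so that every class misses at least $n/10$ vertices.

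\textbf{Step 2 (extra freedom).} There are two sources of freedom.
\begin{enumerate}
\item Each component can be given its own cyclic shift of the depth classes, and its root can be chosen freely.
\item Consider a child $y$ of a vertex $v$ of depth $d$, where $y$ is a leaf of the tree. The edge $vy$ normally lies in $F_d$. It may instead be placed in $F_{d+1}$, because $v$ is unused there: $v$ appears only in $F_{d-1}$, as a leaf, and in $F_d$, as a centre. The vertex $y$ has no children, so it is a centre in no class. The only change is that $y$ then lies in $F_{d+1}$ rather than $F_d$.
\end{enumerate}

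After these choices, every vertex lies in at most two of the three classes:
\begin{itemize}
\item a non-leaf vertex $x$ lies in $F_{\mathrm{dep}(x)-1}$ and $F_{\mathrm{dep}(x)}$;
\item a tree-leaf lies in exactly one of two admissible classes, chosen freely;
\item an isolated vertex lies in none.
\end{itemize}
Consequently the three spans satisfy $|V(F_0)|+|V(F_1)|+|V(F_2)|\le 2n$. Moreover, a vertex is absent from $F_r$ whenever it is a non-leaf of depth $\equiv r+1$, or a tree-leaf assigned away from $r$.

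\textbf{Step 3 (balancing).} It now suffices to choose the rootings, the per-component shifts, and the leaf assignments so that each class misses at least $\epsilon n=n/10$ vertices.

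First assign each tree-leaf uniformly at random to one of its two admissible classes. Then a leaf is missing from $F_r$ with probability at least $1/2$ if $r$ is one of its admissible classes, and with probability $1$ otherwise. For non-leaves, a single component can be rooted at a suitable vertex and shifted so as to spread the depth classes. If a component has many non-leaves, I would choose the root so that the sizes of the three depth classes among non-leaves are each a constant fraction. The intended way is to walk the root along a longest path: depths along a path cycle through all residues, and moving the root changes the class counts gradually, so an intermediate-value argument should apply. Components dominated by leaves are handled by the randomised leaf split, as in the star example, where the two classes each get about half the leaves. Finally, summing contributions over components and using concentration for the leaf assignment, which consists of independent bounded choices, should give that each class misses at least $n/10$ vertices with positive probability.

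\textbf{Main obstacle.} I expect the obstacle to be Step 3 for a single giant component. There one cannot average over independent components, and must show that some rooting together with some leaf split keeps all three spans below $0.9n$. I would first try a potential argument: start from any rooting, and whenever some $F_r$ spans more than $0.9n$ vertices, reroot at a neighbour or move a leaf, and show that the maximum span decreases. The fallback is to exploit the constant slack between $2n/3$ (the average span) and $0.9n$, which leaves a large margin for these local moves.
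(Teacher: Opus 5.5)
There is a genuine gap. Your Steps 1 and 2 only set up a family of candidate decompositions. All of the content of the claim lies in Step 3, and there you give no argument; you leave the single-giant-component case open as the ``main obstacle.''

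The heuristics you offer for Step 3 also do not hold up.

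\textbf{Rerooting.} Moving the root across an edge changes the depth of \emph{every} vertex by $\pm1$, so the class counts need not move gradually. In fact, balancing the non-leaf depth classes by choosing the root can be impossible. Take the spider with $k$ legs of length $2$: centre $c$, middle vertices $m_1,\dots,m_k$, ends $y_1,\dots,y_k$. About half the vertices are non-leaves, yet for every root some residue class mod $3$ contains at most one non-leaf. So ``choose the root so that each non-leaf depth class is a constant fraction'' fails. Any argument must handle components that are neither leaf-dominated nor balanceable by rerooting, and you give none.

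\textbf{Step 2 bookkeeping.} Moving a leaf $y$ with parent $v$ from $F_d$ to $F_{d+1}$ also puts $v$ into $F_{d+1}$. A vertex with both a non-leaf child and a moved leaf child then lies in all three classes. So ``every vertex lies in at most two classes'' is false once leaves are moved, and so is your list of guaranteed absences from $F_r$. That list should also say depth $\equiv r+2$, as in your Step 1, not $r+1$. Any balancing computation has to charge this cost.

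As it stands, the proposal reduces the claim to an unproved balancing statement.

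\textbf{The missing idea: no balancing is needed.} After completing $F$ to a spanning tree, root it at a leaf and split the edges by the parity of the parent's depth. This gives two star forests $F_1,F_2$ with no size control yet. Then find a set $S$ of $\lceil\epsilon n\rceil$ vertices whose incident edges form a star forest, and move all those edges into $F_3$.
\begin{itemize}
\item Every vertex of $S$ becomes isolated in $F_1$ and $F_2$, so each spans at most $(1-\epsilon)n$ vertices.
\item $F_3$ spans at most $3\lceil\epsilon n\rceil\le(1-\epsilon)n$ vertices.
\end{itemize}
Such an $S$ exists by the degree count $2\ell+t\ge n+2$, where $\ell$ and $t$ count leaves and degree-$2$ vertices. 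Either there are $\lceil\epsilon n\rceil$ leaves, whose pendant edges form stars. Or there are at least $(1-2\epsilon)n$ degree-$2$ vertices, lying on at most $2\epsilon n$ maximal degree-$2$ paths. Taking every third vertex along each path then yields $\lceil\epsilon n\rceil$ of them pairwise at distance at least $3$, so their incident edges form vertex-disjoint paths of length $2$.

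Your two ingredients, pendant leaf edges and long degree-$2$ paths, are exactly the two cases of this argument. The difference is that they are used to carve out a small third class, rather than to balance three large ones.
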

\begin{proof}

We may assume that $F$ is a tree on exactly $n$ vertices; indeed, first add isolated vertices so that $F$ has exactly $n$ vertices, and then add edges between its components to obtain a tree $T$. It suffices to find the desired decomposition of $T$: after deleting the added vertices and edges, each piece remains a star forest and its number of vertices cannot increase.

To begin, pick a leaf $v$ of $F$ as the root; let $V_i$ denote the set of vertices at distance $i$ from $v$ (so in particular $V_0 = \{v\}$). Since $F$ is a tree, each of its edges has one end in $V_i$ and the other in $V_{i+1}$ for some $i$. If $i$ is even, place the edge in $F_1$, and otherwise place the edge in $F_2$. Both $F_1$ and $F_2$ are star forests. For now, $F_1$ and $F_2$ might both span more than $(1-\epsilon)n$ vertices, so we next move some edges to $F_3$.

Suppose $F$ has $\ell$ leaves and $t$ vertices of degree $2$. Then, by double-counting the edges of $F$, we have $2(n-1) \geq \ell + 2t + 3(n-\ell-t)$, and hence $2\ell+t \geq n+2$. Thus either $F$ has at least $\ceil{\epsilon n}$ leaves, or it has at least $(1-2\epsilon)n$ vertices of degree $2$.

First suppose $F$ has at least $\ceil{\epsilon n}$ leaves. Move exactly $\ceil{\epsilon n}$ edges incident to leaves into $F_3$. The corresponding leaves are then isolated in both $F_1$ and $F_2$; after deleting them, each of these graphs has at most $(1-\eps)n$ vertices. Moreover, $F_3$ is then a star forest with at most $2\lceil \eps n \rceil \le (1-\eps)n$ vertices. We have $F = F_1 \cup F_2 \cup F_3$, so this concludes the proof for this case. 

Now, suppose instead that $F$ has at least $(1-2\epsilon)n$ vertices of degree $2$. For each maximal path $P=(u_1, \ldots, u_r)$ (where $u_1$ is the endpoint closer to the root $v$) of degree-$2$ vertices in $F$, select $u_2, u_5, u_8, \ldots$. Stop selecting vertices after we have selected $\lceil \epsilon n \rceil$ in total. 
We claim that at least $\lceil\epsilon n\rceil$ vertices are available for selection. Let $s$ be the number of maximal paths of degree-$2$ vertices.
A path containing $r$ such vertices contributes at least
$(r-1)/3$ selected vertices, so the total number available is at
least $(t-s)/3$. Suppressing all degree-$2$ vertices produces a
tree on $n-t$ vertices, and hence $s\le n-t-1\le 2\epsilon n.$ 
Therefore, the number available is at least $\frac{t-s}{3}\ge \frac{1-4\epsilon}{3}n>\lceil \epsilon n\rceil.$ 

Note that the selected vertices on each path are at distance at least 3. The same holds for selected vertices on distinct paths. 
Indeed, the path between two such vertices passes through a vertex of degree at least $3$, as otherwise the two degree-$2$ paths would belong to the same maximal path. It also passes through the first degree-$2$ vertex of the path farther from the root.

Put both edges incident to each selected vertex into $F_3$. Each selected vertex is then isolated in $F_1$ and $F_2$; after deleting them, each of these graphs has at most $(1-\eps)n$ vertices. 
Furthermore, the vertices spanned by $F_3$ are contained in the selected degree-2 vertices and their neighbors, meaning there are $3\ceil{\epsilon n} \leq (1-\epsilon)n$ vertices in $F_3$. Each $F_i$ is a star forest, and once again we have $F = F_1 \cup F_2 \cup F_3$, so the proof is complete.
\end{proof}

After decomposing $F = F_1\cup F_2 \cup F_3$ as in the above claim, by symmetry it suffices to consider 
$F_1$.  
If $F_1$ is edgeless, the desired conclusion is immediate, so assume that $F_1$ has at least one edge. Recall the definition of $\tilde \pE$ from \Cref{subsec:exp_thr}. Set $f(n)=n\tilde \pE(F_1)$. Fix a constant $c_1>0$, to be chosen sufficiently large later, and 
write $F_1 = L_1 \cup H_1$, where $L_1$ consists of the star components of $F_1$ with degree at most $c_1 f(n)$, 
and $H_1$ consists of the remaining components. 

\begin{claim}[Low-degree embedding]\label{claim:embedding-A1} There exists a constant $c=c(c_1, \eps)>0$ such that, for $p = \min\{c f(n)/n, 1\}$, the random graph $G_{n,p}$ contains a copy of $L_1$ with high probability.
\end{claim}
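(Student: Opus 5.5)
We embed $L_1$ greedily, one star at a time, into $G_{n,p}$ with $p=\min\{cf(n)/n,1\}$. First we record what $f(n)$ controls. Since $L_1\subseteq F_1$ and $F_1$ spans at most $(1-\eps)n$ vertices, we apply the definition of $\tilde\pE(F_1)$ to two kinds of subgraphs $I$.

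Taking $I$ to be a single edge gives $\tilde\pE(F_1)\ge (2/n^2)^{1/1}$, which is useless. The useful choice is $I$ a star with $s$ leaves, i.e.\ $K_{1,s}$ for $s\le$ the largest star size. This gives $\tilde\pE(F_1)\ge (s!/n^{s+1})^{1/s}\approx s/(en^{1+1/s})$. More importantly, $I$ can be a disjoint union of stars. For a union of $N$ single edges (a matching), $\aut(I)=N!\,2^N$, so $(\aut(I)/n^{2N})^{1/N}\gtrsim N/n^2$. For a union of $N_s$ copies of $K_{1,s}$, we get $(N_s!\,(s!)^{N_s}/n^{(s+1)N_s})^{1/(sN_s)}$.

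The point is that $f(n)=n\tilde\pE(F_1)\ge 1/e^{O(1)}$ in the relevant regime, by taking $I$ to be a large matching of size $\Theta(n)$ (if $F_1$ has $\Theta(n)$ edges). In general, $f(n)$ bounds from below the quantity needed at each stage of the greedy process below. We may assume $cf(n)/n<1$, since otherwise $p=1$ and the claim is trivial.

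Plan for the greedy embedding. Order the stars of $L_1$ by decreasing degree, say $K_{1,s_1},K_{1,s_2},\dots$ with $s_j\le c_1f(n)$. When embedding star $j$, at least $\eps n$ vertices remain unused, since $v_{L_1}\le(1-\eps)n$. To keep independence, we expose edges in rounds (a multi-round sprinkling: split $p$ into a few independent parts), or more simply we use that each centre candidate's edges to the unused set have not yet been exposed.

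We choose a fresh unused vertex $u$ as the centre and expose its edges to the unused set $U$, with $|U|\ge\eps n$. The number of neighbours is distributed as $\mathrm{Bin}(|U|,p)$, with mean at least $\eps c f(n)\ge 2c_1f(n)\ge 2s_j$ once $c\ge 2c_1/\eps$. We try candidate centres one at a time; each candidate fails with probability at most $e^{-\Omega(\eps cf(n))}$ by Chernoff. Edges between centres and $U$ are exposed only once, since each centre is discarded or used after its exposure. Hence the total failure probability is small, provided each star has many fresh candidate centres available and $f(n)$ is large.

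The main obstacle is the regime where $f(n)$ is small, even $O(1)$, so the per-star Chernoff bound is only constant and the number of stars is huge. In that regime we bound the number of wasted candidate centres, i.e.\ failed trials, rather than taking a union bound over stars. The total number of failures is stochastically dominated by a sum of geometrics with success probability $1-e^{-\Omega(c\eps f)}$, and it must stay below, say, $\eps n/2$ so that $U$ stays large. Here we use the matching and star-union lower bounds on $\tilde\pE$, which ensure $f(n)$ is large relative to $\log$(number of stars of a given size)/size. Concretely, $\tilde\pE\ge$ the value from $N_s$ copies of $K_{1,s}$ gives $c\,\tilde\pE\,n\,\eps\gtrsim s\,(N_s/n)^{1/s}$-type bounds. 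These let the expected waste over stars of size $s$ be at most $\eps n/2^{s+2}$, and summing over $s$ closes the argument. Verifying this calculation, and choosing $c$ large in terms of $c_1,\eps$, is the step we expect to be hardest.
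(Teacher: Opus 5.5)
\textbf{The argument has a gap.} Your skeleton matches the paper's. You embed greedily star by star, expose each candidate centre's edges only to a still-unexposed pool, discard failures, and absorb the discards with the $\eps n$ slack. Your single-pool exposure is also sound, provided discarded candidates are removed from $U$ permanently; the paper does the same thing more cleanly by splitting $[n]=X\cup Y$ into a centre side and a leaf side. The problem is that the argument does not close as written.

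\textbf{The lower bound on $f(n)$.} Your per-candidate failure bound $e^{-\Omega(c\eps f(n))}$ is only useful if $f(n)$ is bounded below. The bound you offer is conditional and incorrectly justified: a star forest with $\Theta(n)$ edges need not contain a matching of size $\Theta(n)$ (a single star $K_{1,n/2}$ is a counterexample).

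\textbf{The misdiagnosed obstacle.} You treat ``$f(n)=O(1)$ and many stars'' as the main difficulty. For it you propose a star-size-by-star-size accounting via the $\tilde\pE$ bounds for unions of copies of $K_{1,s}$, but you leave this unverified. It also only bounds an expectation, so you would still need a concentration step to reach ``with high probability.''

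\textbf{The missing observation.} If $L_1$ has an edge, it contains a star of degree $s$ with $1\le s\le c_1 f(n)$, so $f(n)\ge 1/c_1$; otherwise $L_1$ is edgeless and there is nothing to prove. With this, the argument finishes as follows.
\begin{itemize}
\item While $|U|\ge \eps n/2$, each inspected candidate's neighbour count dominates $\mathrm{Bin}(\ceil{\eps n/2},p)$, with mean $\mu\ge c\eps f(n)/2$.
\item Taking $c\ge 4c_1/\eps$ gives $c_1 f(n)\le \mu/2$. Chernoff then bounds the conditional failure probability of every inspection by $e^{-\mu/8}\le \exp(-c\eps/(16c_1))$.
\item This is at most $\eps/10$ once $c$ is large in terms of $c_1$ and $\eps$.
\item At most $n$ vertices are ever inspected, so the total number of discards is stochastically dominated by $\mathrm{Bin}(n,\eps/10)$. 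By Chernoff it stays below $\eps n/2$ with high probability.
\end{itemize}

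The number of stars is irrelevant, and nothing about $\tilde\pE(F_1)$ is used beyond the cutoff $c_1 f(n)$ defining $L_1$. This is exactly the paper's proof. Your size-by-size summation can be completed once $f(n)\ge 1/c_1$ is in hand, but it is superfluous.
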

\begin{proof} We may assume that $f(n) \ge 1/c_1$, since otherwise 
$L_1$ is edgeless 
and there is nothing to prove. 
We may also assume that $p<1$, as otherwise the claim holds trivially. Since isolated vertices do not affect graph containment, we delete them from $L_1$.

Let $x$ be the number of star components of $L_1$, and let $y$ be their total number of leaves. 
For a component consisting of a single edge, 
we think of one vertex as a center and the other as a leaf. 
Then $x+y=v_{L_1} \leq (1-\epsilon)n$. Partition the vertex set $[n]$ 
into $X\cup Y$, where $|X|=x+\epsilon n/2$ and $|Y| = n-|X| \geq y+\epsilon n/2$. 

We embed the centers of the stars of $L_1$ into $X$ and their leaves into $Y$. Initially, set $Y' = Y$. Process the stars one at a time. For each star $S$, inspect 
the vertices of $X$ that have not previously been used or discarded, one at a time, until finding a vertex $w$ with at least 
$c_1 f(n)$ neighbors in the current set $Y'$. Any inspected vertex in $X$ with fewer than $c_1f(n)$ neighbors in $Y'$ is discarded and not used later in the procedure. Once a suitable vertex $w$ is found, embed the center of $S$ into $w$, embed its leaves into arbitrary neighbors of $w$ in $Y'$, and remove those leaves from $Y'$. 

We claim that this process successfully embeds all of $L_1$ with high probability.
Throughout the algorithm, at most $y$ vertices are removed from $Y$, and hence $|Y'|\ge|Y|-y\ge \eps n/2$.  When a new vertex $v\in X$ is inspected, the current set $Y'$ has been determined entirely by the preceding steps, which involve only previously inspected vertices of $X$. Also, no edge from $v$ to the current set $Y'$ has previously been exposed. Thus, conditional on the entire previous history, the  number of neighbors of $v$ in the current $Y'$ is distributed as $\bin(|Y'|,p)$ and therefore stochastically dominates $Z \sim \bin(\ceil{\epsilon n/2},p)$. The mean of $Z$ satisfies 
\[\mu:=\mathbb E[Z] \ge \frac{\eps n}{2}\cdot\frac{cf(n)}{n}
=\frac{c\eps}{2}f(n).\]
Choose $c$ sufficiently large that $c_1f(n) \le \frac{\mu}{2}$. Then 
the Chernoff bound gives
\[
\pr\bigl(Z<c_1f(n)\bigr)
\le \pr(Z<\mu/2)
\le e^{-\mu/8}
\le \exp\left(-\frac{c\epsilon}{16c_1}\right).
\]
By increasing $c$ further, we may ensure that the last quantity is
at most $\eps/10$.

It follows, by a standard coupling, that the total number of
discarded vertices is stochastically dominated by $\bin(n,\epsilon/10).$ 
Another application of the Chernoff bound gives that the probability of discarding at least $\epsilon n/2$ vertices is $o(1)$. Since $|X|=x+\eps n/2$, on the complementary event the procedure cannot exhaust $X$ before finding centers for all $x$ stars and embedding their neighbors into $Y$. Thus it embeds all of $L_1$ with high probability.
\end{proof}

Now, we turn to $H_1$, which by construction has no isolated vertices and consists of stars of degree at least $L:=\lceil c_1f(n)\rceil$. We first produce a lower bound on the graphic expectation threshold of $H_1$. 

\begin{mydef}
Let $\Delta$ be the maximum degree of $H_1$. For each integer $\ell$ with $L\leq \ell\leq \Delta$, let $M_\ell$ denote the number of components of $H_1$ that are stars of degree at least $\ell$. 
Define 
\[\tilde q(H_1) = \max_{L\leq \ell \leq \Delta} \frac{\ell \cdot M_\ell^{1/\ell}}{n^{1+1/\ell}}.\]
\end{mydef}

We claim that $\tilde \pE(H_1) \geq \Omega(\tilde q(H_1))$. To see this, 
for any $L \le \ell \le \Delta$, let  $I$ be the subgraph of $H_1$ 
consisting of $M_\ell$ vertex-disjoint stars of degree exactly $\ell$, obtained by deleting extra leaves from stars of larger degree. 
Since $v_I=(\ell+1)M_\ell$, $e_I=\ell M_\ell$, and $\aut(I) \ge (\ell!)^{M_\ell}M_\ell!$, using $r! \ge (r/e)^r$, 
$$\tilde \pE(H_1) \geq \left(\frac{\aut(I)}{n^{v_I}}\right)^{1/e_I} \ge \left(\frac{(\ell!)^{M_\ell} M_\ell!}{n^{(\ell+1)M_\ell}}\right)^{1/(\ell M_\ell)} \ge e^{-2}\cdot \frac{\ell M_\ell^{1/\ell}}{n^{1+1/\ell}}.
$$
Taking the maximum over $L \le \ell \le \gD$ yields $\tilde \pE(H_1) \ge e^{-2}\tilde q(H_1)$,  as desired. 

We thus have
\beq{eq:tilde-q}\tilde q(H_1) \leq e^2\tilde\pE(H_1) \leq e^2 \tilde \pE(F_1) =e^2f(n)/n.\enq We will use this inequality in the next two claims.

\begin{claim}[Size bound on high-degree stars] \label{claim:B1-small} The number of 
vertices of $H_1$ is at most $\delta n$, where $\delta>0$ can be made arbitrarily small by choosing $c_1$ sufficiently large.
\end{claim}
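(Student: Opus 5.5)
The plan is to turn the inequality \eqref{eq:tilde-q} into an upper bound on each $M_\ell$, and then add these bounds up to count the vertices of $H_1$. If $H_1$ is edgeless there is nothing to prove, so assume $H_1$ has at least one star. Then $L=\lceil c_1 f(n)\rceil\ge 1$, since $f(n)>0$ because $F_1$ has an edge.

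\textbf{Step 1: bound each $M_\ell$.} Fix $\ell$ with $L\le \ell\le \Delta$. By the definition of $\tilde q(H_1)$ and by \eqref{eq:tilde-q},
\[
\frac{\ell\, M_\ell^{1/\ell}}{n^{1+1/\ell}}\le \tilde q(H_1)\le \frac{e^2 f(n)}{n}.
\]
Rearranging gives
\[
M_\ell\le \Bigl(\frac{e^2 f(n)}{\ell}\Bigr)^{\ell} n .
\]
Since $\ell\ge L\ge c_1 f(n)$, we have $e^2 f(n)/\ell\le e^2/c_1=:r$. Hence $M_\ell\le r^\ell n$ for every $L\le \ell\le\Delta$. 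We take $c_1\ge 2e^2$, so that $r\le 1/2$.

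\textbf{Step 2: count vertices by layers.} Every component of $H_1$ is a star of degree at least $L$, and a star of degree $d$ has $d+1$ vertices. Writing $d+1=2+\sum_{\ell=2}^{d}1$ and grouping by $\ell$ gives
\[
v_{H_1}=\sum_{\text{components}}(d+1) = (L+1)M_L+\sum_{\ell=L+1}^{\Delta} M_\ell .
\]
Here we used that, for every $\ell\le L$, the number of components of degree at least $\ell$ equals $M_L$.

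\textbf{Step 3: sum the bounds.} Plugging Step 1 into Step 2 gives
\[
v_{H_1}\le \Bigl((L+1)r^L+\sum_{\ell>L} r^\ell\Bigr)n .
\]
For $r\le 1/2$ and $L\ge1$, the function $(L+1)r^L$ is maximized at $L=1$, so $(L+1)r^L\le 2r$. The geometric tail satisfies $\sum_{\ell>L}r^\ell\le 2r^{L+1}\le r$. Together these give
\[
v_{H_1}\le 3r\, n = \frac{3e^2}{c_1}\,n .
\]
This is at most $\delta n$ once $c_1\ge 3e^2/\delta$, which proves the claim.

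The only delicate point is the first term $(L+1)M_L$. The factor $L$ is large, but it is beaten by the geometric decay $r^L$, and this uses $L\ge c_1 f(n)$ a second time. The rest is bookkeeping.
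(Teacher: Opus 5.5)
Your proof is correct and follows essentially the same route as the paper. Both arguments derive $M_\ell \le n\,(e^2 f(n)/\ell)^\ell \le n\,(e^2/c_1)^\ell$ from \eqref{eq:tilde-q} using $\ell \ge L \ge c_1 f(n)$, then sum a rapidly decaying series; the only difference is bookkeeping, as the paper bounds $v_{H_1}\le 2e_{H_1}\le 2\sum_{\ell=L}^{\Delta}\ell M_\ell$ and sums $\sum_{\ell}\ell\,(e^2/c_1)^\ell$, whereas your exact identity $v_{H_1}=(L+1)M_L+\sum_{\ell>L}M_\ell$ gives the explicit value $\delta=3e^2/c_1$.
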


\begin{proof} Since $H_1$ is a star forest with no isolated vertices, the number of its vertices is at most $2e_{H_1}$ (because every star with $d~(\ge 1)$ edges has $d+1 \le 2d$ vertices), so it suffices to upper bound $e_{H_1}$. Note that, by 
rearranging \eqref{eq:tilde-q}, we have that for every $\ell$ with $L \le \ell \le \Delta$,
$$M_\ell  \leq n\left(e^2f(n)/\ell\right)^\ell.$$ 
Since $M_\ell$ is the number of components of $H_1$ which are stars of degree at least $\ell$, 
we have 
$$e_{H_1} \leq \sum_{\ell=L}^{\Delta} \ell M_\ell \leq n \sum_{\ell=L}^\infty \ell \left(\frac{e^2f(n)}{\ell}\right)^\ell \le n \sum_{\ell=L}^\infty \ell \left(\frac{e^2}{c_1}\right)^\ell \le \delta n/2,$$
where the last inequality holds for a sufficiently large $c_1$.
This yields the conclusion.
\end{proof}

Finally, we use the fact that $H_1$ has at most $\gd n$ vertices 
to embed it into $G_{n,p}$ with $p=cf(n)/n$.

\begin{claim}[High-degree embedding] \label{claim:embedding-B1} There exists a constant $c=c(c_1, \eps)$ such that, for $p=\min\{cf(n)/n,1\}$, the random graph $G_{n,p}$ contains a copy of $H_1$ with probability at least $1/2$.
\end{claim}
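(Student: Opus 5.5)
The plan is to embed $H_1$ greedily, as in the proof of \Cref{claim:embedding-A1}. The difference is that a star of degree $\ell$ now needs a center whose degree may be far above the typical degree $\Theta(cf(n))$. We pay for this by inspecting many candidate centers, and we use the bound $M_\ell\le n(e^2f(n)/\ell)^\ell$ from the proof of \Cref{claim:B1-small} to show that the total number of inspections stays small.

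\textbf{Setup.} We may assume $p<1$ and that $H_1$ is nonempty. Choose $c_1$ large enough that \Cref{claim:B1-small} gives $v_{H_1}\le \delta n$ with $\delta\le 1/10$. Split $[n]=X\cup Y$ with $|X|=|Y|=n/2$; centers go into $X$ and leaves into $Y$. Initialize $Y'=Y$. Process the stars of $H_1$ in any order. For a star of degree $\ell$, inspect fresh vertices $w\in X$ one at a time until some $w$ has at least $\ell$ neighbors in the current $Y'$. Embed the star there, using $\ell$ of those neighbors as leaves, and delete them from $Y'$. Discard every inspected vertex that fails. Throughout, $|Y'|\ge n/2-\delta n\ge n/4$.

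As in \Cref{claim:embedding-A1}, the edges from a newly inspected $w$ to the current $Y'$ are unexposed. So, conditionally on the history, the degree of $w$ into $Y'$ stochastically dominates $\bin(\lceil n/4\rceil,p)$. Consequently the number of inspections spent on a star of degree $\ell$ is dominated by a geometric variable with success probability
\[
s_\ell:=\pr\bigl(\bin(\lceil n/4\rceil,p)\ge \ell\bigr).
\]

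\textbf{Lower bound on $s_\ell$.} Write $\lambda=np/4=cf(n)/4$. If $\ell\le\lambda/2$, Chernoff gives $s_\ell\ge 1/2$. Otherwise $\ell>\lambda/2$, and we bound $\pr(\bin=\ell)$ below by a Poisson-type estimate. This uses $\ell\le\delta n$ and $p<1$, with some care if $p$ is not small, and yields
\[
s_\ell\ \ge\ \frac{1}{O(\ell)}\,e^{-\lambda}\Bigl(\frac{e\lambda}{\ell}\Bigr)^{\ell}\ \ge\ \frac{1}{O(\ell)}\Bigl(\frac{\lambda}{e\ell}\Bigr)^{\ell},
\]
where the second step uses $\lambda<2\ell$. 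Choosing this estimate correctly is the main obstacle. The naive bound $\binom N\ell p^\ell(1-p)^N$ loses a factor $e^{-cf(n)/4}$, which would force $c$ to be small relative to $c_1$. Comparing against $e^{-\lambda}$ only for $\lambda<2\ell$ avoids this.

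\textbf{Counting inspections.} Since $s_\ell$ is nonincreasing in $\ell$, summation by parts bounds the expected total number of inspections by
\[
\sum_{\ell=L}^{\Delta}\frac{M_\ell-M_{\ell+1}}{s_\ell}\ \le\ \sum_{\ell\ge L}\frac{M_\ell}{s_\ell}\ \le\ n\sum_{\ell\ge L}O(\ell)\Bigl(\frac{e^{3}f(n)}{\lambda}\Bigr)^{\ell}= n\sum_{\ell\ge L}O(\ell)\Bigl(\frac{4e^3}{c}\Bigr)^{\ell}.
\]
The second inequality uses $M_\ell\le n(e^2f(n)/\ell)^\ell$. This bound is at most $n/100$ once $c=c(c_1,\eps)$ is large. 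Terms with $s_\ell\ge1/2$ contribute at most $2M_L\le 2e_{H_1}\le \delta n$ in total, which is also small.

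\textbf{Conclusion.} By Markov's inequality, the total number of inspections is at most $n/4<|X|$ with probability at least $1/2$. On this event the procedure never runs out of candidate centers, and it embeds all of $H_1$.
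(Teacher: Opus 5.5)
Your proposal is correct and is essentially the paper's proof. It uses the same greedy embedding of centers into $X$ and leaves into $Y$, geometric domination of the number of inspections per star, and a two-regime lower bound on the success probability: an absolute constant when $\ell$ is small compared with $np$, and a point-probability bound losing only a factor $C^{-\ell}$ otherwise. It then combines the estimate $M_\ell\le n(e^2f(n)/\ell)^\ell$ (which comes from $\tilde q(H_1)\le e^2f(n)/n$) with Markov's inequality on the total number of inspections.

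The only looseness is your intermediate Poisson-type estimate. It is not valid uniformly over $\ell\le\delta n$; it can be off by a factor $e^{-\Theta(\ell)}$ when $\ell/n$ is bounded below. However, the bound you actually use is $s_\ell\ge(\lambda/(C\ell))^\ell/\mathrm{poly}(\ell)$ for an absolute constant $C$. This follows directly from $\binom{N}{\ell}\ge(N/\ell)^\ell$ and $(1-p)^{N}\ge e^{-O(\lambda)}\ge e^{-O(\ell)}$, just as in the paper, and any such $C$ is absorbed by enlarging $c$.
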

\begin{proof} 
Choose $c_1$ sufficiently large that the constant $\gd$ in \Cref{claim:B1-small} satisfies $\gd \le 1/48$.
We may assume $p<1$, as otherwise the claim is immediate.

We partition the vertex set $[n]$ 
into $X\cup Y$, each with $n/2$ vertices. We embed the centers of the stars of $H_1$ into $X$ and their leaves into $Y$. 
Initially, set $X'=X$ and $Y'=Y$, 
and process the stars one at a time (in an arbitrary order). To embed a star $S$ of degree $\ell$, inspect the vertices of $X'$ one at a time. If an inspected vertex has fewer than $\ell$ neighbors in $Y'$, remove it from $X'$ and continue.  
Upon finding a vertex $w$ with at least $\ell$ neighbors in $Y'$, 
embed the center of $S$ into $w$ and its leaves into arbitrary $\ell$ such neighbors; then remove $w$ from $X'$ and chosen neighbors from $Y'$. If no suitable vertex remains in $X'$, the procedure fails. 

By \Cref{claim:B1-small}, $H_1$ has at most 
$\gd n \le n/48$ vertices, so throughout the procedure we have $|Y'|\ge n/2-\gd n \ge n/3$. When a vertex $v \in X'$ is inspected, the current set $Y'$ has been determined entirely by the preceding steps, which involve only previously inspected vertices of $X$. Also, no edge from $v$ to the current set $Y'$ has previously been exposed. 
Hence, conditional on the preceding history, the number of neighbors of $v$ in $Y'$ is distributed as $\bin(|Y'|, p)$ and therefore stochastically dominates $\bin(n/3, p)$. Therefore, when embedding a star of degree $\ell$, each inspected 
vertex has conditional probability at least
\[r_\ell:=\pr(\bin(n/3,p)\ge \ell)
\]
of being usable as the image of its center. Consequently, conditional on the preceding history, the number $T_\ell$ of vertices inspected until such a center is found is stochastically dominated by a geometric random variable with success probability $r_\ell$, and hence 
\beq{eq:T_ell} \text{$T_\ell$ has conditional  expectation  at most $1/r_\ell$ regardless of the preceding history.}\enq 
We now estimate $r_\ell$. Recalling \eqref{eq:tilde-q}, since $p=cf(n)/n$, we have $p \geq c_2 \tilde q(H_1)$, where $c_2:=c/e^2$ can be made arbitrarily large by choosing $c$ sufficiently large. We show that there is a constant $c_3:=c_2/(3e^4)~(=c/(3e^6))$ which can also be made arbitrarily large, such that, for every integer $\ell$ with $L\le \ell\le \Delta$,
\beq{eq:bin-estimate} r_\ell \geq \min\{1/6, c_3^\ell \cdot M_\ell / n\}.
\enq

Let $Z \sim \bin(n/3, p)$ and write $\mu=\mathbb E[Z]=np/3$. First suppose that $1\leq \ell \leq np/6$.  By the Paley-Zygmund inequality,
\[\pr(Z>\mu/2) \geq \frac{1}{4} \cdot \frac{\mu^2}{\EE[Z^2]}.\]
Since $\EE[Z^2]\le \mu+\mu^2$ and $\mu \ge 2\ell \ge 2$, it follows that
\[\pr(Z\ge \ell)\ge \pr(Z>\mu/2)\ge \frac14\cdot \frac{\mu^2}{\mu^2+\mu}\ge \frac16.
\]

Now suppose that $np/6\leq \ell\leq \Delta$. 
In this case we have $p\leq1/2$ since $\ell \le \gD < v_{H_1} \le \gd n \le n/48$, which would contradict $\ell \geq np/6$ if $p>1/2$. 
Since $\ell < n/48<n/3$, 
\[\pr(Z\ge \ell)\ge \binom{n/3}{\ell}p^\ell(1-p)^{n/3-\ell}.
\]
Since $p \le 1/2$, we have $1-p\ge e^{-2p}$. Together with $\ell \ge np/6$, this gives
\[(1-p)^{n/3-\ell}\ge \exp(-2p(n/3-\ell))\ge \exp(-2np/3)\ge e^{-4\ell}.
\]
Also, using $\binom{n/3}{\ell}
\ge\left(\frac{n}{3\ell}\right)^\ell$ and that $p \ge c_2 \tilde q(H_1)$ we obtain
\[\binom{n/3}{\ell}p^\ell \ge 
\left(\frac{np}{3\ell}\right)^\ell
\ge
\left(\frac{n}{3\ell}\right)^\ell
c_2^\ell
\frac{\ell^\ell M_\ell}{n^{\ell+1}}
=
\left(\frac{c_2}{3}\right)^\ell
\frac{M_\ell}{n}.
\]
Combining these estimates gives
\[
\pr\bigl(Z\ge\ell\bigr)
\ge
\left(\frac{c_2}{3e^4}\right)^\ell
\frac{M_\ell}{n}
=
\frac{c_3^\ell M_\ell}{n}.
\]
This proves \eqref{eq:bin-estimate}.

We now use \eqref{eq:bin-estimate} to bound the total number of vertices inspected by the embedding procedure. Let $N_\ell$ be the number of components of $H_1$ that are stars of degree \emph{exactly} $\ell$, and let $T$ denote the total number of vertices inspected during the embedding procedure. Given \eqref{eq:T_ell}, by the tower property and linearity of expectation,
\[\EE [T]\le \sum_{\ell=L}^\Delta \frac{N_\ell}{r_\ell}.
\]
Split the sum according to whether $c_3^\ell M_\ell/n\ge 1/6$. For such values of $\ell$, \eqref{eq:bin-estimate} gives $r_\ell \ge 1/6$, so their total contribution is at most 
$6\sum_\ell N_\ell\le 6v_{H_1}\le 6\delta n$. For the remaining values of $\ell$,  \eqref{eq:bin-estimate} gives $r_\ell \ge c_3^\ell M_\ell/n$, and since $N_\ell \le M_\ell$, 
their total contribution is at most $\sum_{\ell=L}^\gD\frac{nN_\ell}{c_3^\ell M_\ell} \le \sum_{\ell\ge1} n c_3^{-\ell}\le \frac{n}{c_3-1}.$

Now, choose $c$ so that $c_3 \ge 9$. 
Recalling that $\gd \le 1/48$, we then have $\EE[T]\le 6\delta n+n/(c_3-1) \le n/8+n/8=n/4$. So, by Markov's inequality,
\[\pr(T>n/2)\le 1/2.\]
On the event $\{T\le n/2\}$, the greedy embedding never exhausts the vertices of $X$, and therefore embeds every nontrivial component of $H_1$. Thus $G_{n,p}$ contains $H_1$ with probability at least $1/2$.
\end{proof}

We now complete the proof of the lemma. By \cref{claim:star_forest_decomp}, we can write $F = F_1\cup F_2 \cup F_3$ where each $F_i$ is a star forest spanning at most $(1-\epsilon) n$ vertices. Further decomposing each $F_i$ into its low-degree star components and its high-degree star components gives a $6$-decomposition $(L_1, H_1, L_2, H_2, L_3, H_3)$ of $F$. Claims \ref{claim:embedding-A1} and \ref{claim:embedding-B1} show that $p_c(L_i), p_c(H_i)=O(\tilde \pE(F_i))$ for each $i$. Since $F_i$ is a subgraph of $F$, we have $\tilde \pE(F_i)=O(\pE(F_i))\le O(\pE(F)) \le  O(q(F))$. Thus every piece has threshold $O(q(F))$, completing the proof.
\end{proof}

\section{Proof of \texorpdfstring{\Cref{thm:general graph containment}}{Theorem 1.13}}\label{sec:general graph containment}

We first show that the assumed upper bound on the maximum degree implies an upper bound on the number of automorphisms of a graph.

\begin{lem}\label{lem: max deg aut bound}
Every 
graph $I$ with $v_I \leq n$, 
no isolated vertices, and maximum degree $\Delta \leq n^{1-\epsilon}$ satisfies 
$\aut(I) \leq n^{(1-\epsilon/2)v_I}$.
\end{lem}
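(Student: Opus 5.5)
The plan is to bound $\aut(I)$ by counting how many ways an automorphism can be built up one vertex at a time. Each time we place a vertex that is adjacent to an already-placed vertex, there are at most $\Delta$ choices. Only the first vertex of each connected component can be placed freely, and there are few components because $I$ has no isolated vertices.

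\textbf{Setup.} Let $I_1,\ldots,I_c$ be the connected components of $I$, and in each $I_j$ fix a root $r_j$. Order $V(I)$ component by component, within each component in breadth-first order from $r_j$. Then every non-root vertex $u$ has a \emph{parent} $\mathrm{par}(u)$: a neighbor of $u$ that appears earlier in the order.

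\textbf{Counting.} An automorphism $\sigma$ of $I$ is determined by the sequence of images of the vertices in this order. We count the admissible choices step by step:
\begin{itemize}
\item for a root $r_j$, the image $\sigma(r_j)$ is some vertex of $I$, so there are at most $v_I\le n$ choices;
\item for a non-root $u$, the image $\sigma(u)$ must be a neighbor of the already chosen vertex $\sigma(\mathrm{par}(u))$, since $\sigma$ preserves adjacency, so there are at most $\Delta\le n^{1-\eps}$ choices.
\end{itemize}
Multiplying over all vertices gives
\[
\aut(I)\le n^{c}\,\Delta^{\,v_I-c}\le n^{c}\,n^{(1-\eps)(v_I-c)} = n^{\,v_I-\eps(v_I-c)}.
\]

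\textbf{Bounding the number of components.} Since $I$ has no isolated vertices, every component has at least two vertices, so $c\le v_I/2$. Hence $v_I-c\ge v_I/2$, and therefore
\[
\aut(I)\le n^{\,v_I-\eps v_I/2}=n^{(1-\eps/2)v_I},
\]
which is the claimed bound.

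There is no real obstacle. The only step needing care is to make the injectivity precise: the map from automorphisms to sequences of choices is injective, and each image is constrained by its parent's image. Both follow directly from the breadth-first ordering.
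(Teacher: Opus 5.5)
Your proof is correct and follows essentially the same idea as the paper's. An automorphism is determined by the image of one root per component (at most $n$ choices each) and then by the images of the remaining vertices along a search tree (at most $\Delta$ choices each). This gives exactly the paper's intermediate bound $n^{c}\Delta^{v_I-c}$, after which $c\le v_I/2$ finishes.

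The only difference is bookkeeping. The paper groups isomorphic components and uses $\aut(I)=\prod_j m_j!\,\aut(B_j)^{m_j}$ with $m_j!\,b_j^{m_j}\le n^{m_j}$. Your global ordering instead lets each root range over all of $V(I)$, which absorbs the permutations of isomorphic components directly.
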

\begin{proof}
We use a component-by-component count. If $B$ is a connected graph 
with $b:=v_B\ge2$, then an automorphism of $B$ is determined by the image of one starting vertex and then by the images chosen during a depth-first search of a spanning tree of $B$. Hence $\aut(B)\le b\Delta^{b-1}.$

Now write the connected components of $I$ as $m_j$ copies of pairwise non-isomorphic connected graphs $B_j$, where $b_j:=v_{B_j}$, so 
$\sum_j m_jb_j=v_I$. Then
\[\aut(I)=\prod_j m_j!\,\aut(B_j)^{m_j}.
\]
For each $j$, using $m_j!\le m_j^{m_j}$ and $m_jb_j\le n$, we get
\[
 m_j!\,\aut(B_j)^{m_j}
 \le m_j^{m_j}(b_j\Delta^{b_j-1})^{m_j}
 = (m_jb_j)^{m_j}\Delta^{m_j(b_j-1)}
 \le n^{m_j} n^{(1-\epsilon)m_j(b_j-1)}.
\]
The exponent on the right is
\[m_j+(1-\epsilon)m_j(b_j-1)=(1-\epsilon)m_jb_j+\epsilon m_j\le (1-\epsilon/2)m_jb_j,
\]
since $b_j\ge2$. Multiplying over all $j$ gives
\[\aut(I)\le \prod_j n^{(1-\epsilon/2)m_jb_j}=n^{(1-\epsilon/2)v_I},\] as desired.
\end{proof}

\begin{proof}[Proof of \cref{thm:general graph containment}]

Set $C' := \max(C, 1)$, 
$k' := \ceil{13C'/\epsilon}$, and $D := 5k'$. Let $H$ be a graph on at most $n$ vertices with $\d(H) \leq C\frac{\log n}{\log\log n}$ and maximum degree at most $n^{1-\epsilon}$. 

If $\d(H) \leq D$, then applying \cref{thm:bounded degeneracy}, there exists a constant $k'' = k''(D)$ such that $p_{k''}(H) = O(q(H))$. Thus, we may assume that $D \leq \d(H) \leq C\frac{\log n}{\log\log n}$. By \cref{thm:k-thr}, it is enough to find a $k'$-decomposition $(H_1, \ldots, H_{k'})$ of $H$ with $p_c(H_i) = O(q(H))$ for all $i$; this would prove \cref{thm:general graph containment} with $k = \max\{k', k''\}$.

Recall that $\pE(H)=\Theta(\tilde \pE(H))$ and $\pE(H) \leq q(H)$. 
It suffices to find a $k'$-decomposition $(H_1, \ldots, H_{k'})$ of $H$
such that
\[
\tilde \pE(H_i)
\le (\log n)^{-3.1}\tilde \pE(H)
\qquad\text{for every }i\in[k'].
\]
Indeed, by \Cref{thm:DKP}, this would imply
\[
p_c(H_i)
=O\bigl(\pE(H_i)\log^3n\bigr)=O\bigl(\tilde \pE(H_i)\log^3n\bigr)\le O\bigl((\log n)^{-0.1}\tilde \pE(H)\bigr)=o(q(H)).
\]
Equivalently, it suffices to prove that
\begin{align}\label{eq:graphic exp goal} \log \tilde \pE(H_i) \leq -3.1\log\log n + \log \tilde \pE(H).\end{align}

Let $(H_1, \ldots, H_{k'})$ be a $k'$-decomposition of $H$ 
such that $\d(H_i) \leq \ceil{\d(H) / k'}$ for every $i \in [k]$, which exists by \cref{claim: degen decomp}. By symmetry, it suffices to consider $H_1$. If $H_1$ is edgeless, then $p_c(H_1)=0$, so $p_c(H_1)=O(q(H))$ holds trivially. We may therefore assume that $H_1$ has at least one edge. 
Let $I$ be a maximizer in the definition of $\tilde \pE(H_1)$. We may assume $I$ has no isolated vertices since removing them only increases the quantity $(\aut(I)/n^{v_I})^{1/e_I}$. Using \Cref{lem: max deg aut bound}, 
\begin{align*}
\log \tilde \pE(H_1) &= \log\left(\left(\frac{\aut(I)}{n^{v_I}}\right)^{1/e_I}\right) \leq \frac{1}{e_I}((1-\epsilon/2)v_I\log n - v_I \log n) = -\frac{v_I}{e_I}\frac{\epsilon}{2}\log n.
\end{align*} By \cref{claim:degen vs mad}, we have $e_I/v_I \leq d(H_1) \leq \d(H_1) \leq \ceil{\d(H)/k'}$, so 
\[\log \tilde \pE(H_1) \leq -\frac{1}{\ceil{\d(H)/k'}} \frac{\epsilon}{2} \log n.\]

Since $\d(H)\ge D=5k'$, we have $1 \le \d(H)/(5k')$, and hence
\[
\left\lceil\frac{\d(H)}{k'}\right\rceil
\le \frac{\d(H)}{k'}+1
\le \frac{6\cdot \d(H)}{5k'},
\]
from which it follows that $\log\tilde \pE(H_i)
\le
-\frac{5k'\epsilon}{12\cdot \mathrm{degen}(H)}\log n$.
Since $k'=\lceil 13C'/\eps \rceil$, we have $k'\epsilon\ge13C'$, and therefore
\[
\log\tilde \pE(H_i)
\le
-\frac{65C'}{12\cdot \d(H)}\log n.
\]

On the other hand, let $J$ be a densest subgraph of $H$, satisfying $d(H) = e_J/v_J$. Then
\[\tilde \pE(H) \ge \left(\frac{\aut(J)}{n^{v_J}}\right)^{1/e_J} \ge n^{-v_J/e_J}.\]
By \Cref{claim:degen vs mad}, $e_J/v_J=d(H) \ge \d(H)/2$, so we have $\log \tilde\pE(H) \geq -\frac{2}{\mathrm{degen}(H)}\log n$. 
Hence, \eqref{eq:graphic exp goal} is satisfied if
$$-\frac{65 C'}{12\cdot \d(H)}\log n \leq -3.1\log\log n - \frac{2}{\d(H)}\log n;$$ equivalently,
$$\d(H)\cdot \frac{\log \log n}{\log n} \leq \frac{65C'/12 - 2}{3.1}.$$

Since $C'\geq 1$, we have $(65C'/12-2)/3.1 \ge (65/12-2)C'/3.1>C'\ge C$. 
The hypothesis of the theorem gives $\d(H) \cdot \log\log n/\log n \leq C$, so the desired inequality holds. 
\end{proof}

\section{Discussion and open problems}\label{sec:discussion}

The $k$-threshold reformulation established in \Cref{prop:equiv} suggests a new route toward \Cref{conj:DCC}. For graph containment properties, \Cref{thm:k-thr} makes this route concrete by reducing the problem of bounding $k$-thresholds to finding decompositions whose pieces have small ordinary thresholds. This motivates the following decomposition conjecture.

\begin{conj}\label{conj:DCC_k_str}
    There exist an integer $k \ge 2$ and a constant $L \ge 1$ such that, for every $n$ and every graph $H$ on at most $n$ vertices, $H$ admits a $k$-decomposition $(H_1, \ldots, H_k)$ satisfying $p_c(H_i) \le Lq(H)$ for every $i \in [k]$.
\end{conj}

By \Cref{thm:k-thr}, \Cref{conj:DCC_k_str} would imply that \Cref{conj:DCC_k} holds for every graph containment property. Our results establish this conclusion for several broad classes of target graphs, but the present methods do not appear to extend to arbitrary graphs.

One limitation arises from the logarithmic loss in the general bound $p_c(J)=O(\pE(J)\log^3 n)$. In the proof of \Cref{thm:general graph containment}, we decompose $H$ into graphs of lower degeneracy and show that their expectation thresholds are sufficiently smaller than that of $H$ to absorb this loss. Once $\d(H)=\omega(\log n/\log\log n)$, the argument no longer provides a sufficient gap to absorb the factor $\log^3 n$. Extending our approach beyond this range may therefore require sharper upper bounds on ordinary thresholds or a different decomposition argument.

The maximum-degree assumption in \Cref{thm:general graph containment} enters for a different reason: it is used in \Cref{lem: max deg aut bound} to bound the number of automorphisms of subgraphs of $H$. It would be interesting to determine whether this assumption can be removed.

One may isolate the difficulty created by high-degree vertices as follows. Given a graph $H$ of degeneracy $O(\log n/\log\log n)$, partition its vertices according to whether their degrees are at least $n^{1-\eps}$. Let $H_h$ and $H_l$ be the subgraphs induced by the high and low degree vertices, respectively, and let $H'$ consist of the edges between $H_h$ and $H_l$. The methods developed here can treat $H_h$ and $H_l$, leaving the bipartite graph $H'$ as the main remaining obstacle.

A separate question is whether more than two pieces are ever necessary for graph containment properties. Our results show that $k=2$ suffices for several classical spanning target graphs, but we do not know whether this holds for every graph.

\begin{question}\label{question:2-thr}
Does there exist a sequence of graphs $H=H_n$ such that $p_2(H) \gg q(H)$?
\end{question}

We note that even a proof of \Cref{conj:DCC_k_str} would not immediately settle \Cref{conj:DCC_k} for arbitrary increasing graph properties. Indeed, if $\cI$ is an increasing graph property and $G \in \cI$, then \Cref{thm:k-thr} gives $p_k(\cI)\le r_k(G)$. The conjectured decomposition bound would control $r_k(G)$ in terms of $q(G)=q(\cI_G)$, whereas the desired conclusion involves $q(\cI)$. Since $\cI_G \sub \cI$, one only has $q(\cI) \le q(G)$, which goes in the opposite direction of the comparison needed for our purposes. Extending the present approach to general increasing graph properties therefore appears to require additional ideas.

Finally, we remark that the proof of \Cref{thm:k-thr} extends with minor changes to $r$-uniform hypergraph properties for every fixed $r \ge 2$. It is natural to ask whether the resulting reduction yields any nontrivial bounds on $k$-thresholds for hypergraph containment properties.

\section*{Acknowledgments}
We are grateful to Noga Alon for suggesting the use of graph degeneracy as a key quantity to study for our decomposition arguments. 
Part of this work was carried out while the third and fourth authors visited the KIAS-KAIST Workshop on Current Challenges in Mathematics, and the authors gratefully acknowledge KIAS and KAIST for their support and hospitality. 

The authors used generative AI to assist with editing this manuscript. All original mathematical ideas and final language are our own.

JP was supported by NSF Grant DMS-2324978, NSF CAREER Grant DMS-2443706 and a Sloan Fellowship. 

\bibliographystyle{plain}
\bibliography{bibliography}

\end{document}